\renewcommand{\leq}{\leqslant}
\renewcommand{\le}{\leqslant}
\renewcommand{\geq}{\geqslant}
\renewcommand{\ge}{\geqslant}
\definecolor{citation}{rgb}{0.2,0.5,0.2}
\definecolor{formula}{rgb}{0.1,0.2,0.5}
\definecolor{url}{rgb}{0,0.2,0.7}
\newtheorem{theorem}{Theorem}[section]
\newtheorem{lemma}[theorem]{Lemma}
\newtheorem{prop}[theorem]{Proposition}
\newtheorem{claim}[theorem]{Claim}
\theoremstyle{definition}
\theoremstyle{remark}
\newtheorem{rem}[theorem]{Remark}
\theoremstyle{remark}
\numberwithin{equation}{section}
\renewcommand{\tilde}{\widetilde}
\newcommand{\p}[1]{\mathcal{P}_{{#1}}}
\def\R {\mathbb{R}}
\def\N {\mathbb{N}}
\def\S {\mathbb{S}}
\def\O {\Omega}
\def\e {\mathcal{E}}
\def\a {\mathcal{A}}
\def\j {\mathcal{J}}
\def\eps{\varepsilon}
\def\ds{\displaystyle}
\newcommand{\baa}{\begin{array}}
\newcommand{\eaa}{\end{array}}
\newcommand{\ba}{\begin{eqnarray}}
\newcommand{\ea}{\end{eqnarray}}
\newcommand{\be}{\begin{equation}}
\newcommand{\ee}{\end{equation}}
\newlength{\defbaselineskip}
\begin{document}

\title[]{A counterexample to the Liouville property\\
 of some nonlocal problems}

\author[Julien Brasseur]{Julien Brasseur}
\address{BioSP, INRA, 84914, Avignon, France, and Aix Marseille Univ, CNRS, Centrale Marseille, I2M, Marseille, France}
\author[J\'er\^{o}me Coville]{J\'er\^ome Coville}
\address{BioSP, INRA, 84914, Avignon, France}

\begin{abstract}
In this paper, we construct a counterexample to the Liouville property of some nonlocal reaction-diffusion equations of the form
$$ \int_{\mathbb{R}^N\setminus K} J(x-y)\,( u(y)-u(x) )\mathrm{d}y+f(u(x))=0, \quad x\in\R^N\setminus K,$$
where $K\subset\mathbb{R}^N$ is a bounded compact set, called an "obstacle", and $f$ is a bistable nonlinearity. When $K$ is convex, it is known that solutions ranging in $[0,1]$ and satisfying $u(x)\to1$ as $|x|\to\infty$ must be identically $1$ in the whole space. We construct a nontrivial family of simply connected (non-starshaped) obstacles as well as data $f$ and $J$ for which this property fails.
\end{abstract}

\maketitle

\tableofcontents

\section{Introduction}

\subsection{A nonlocal problem in heterogeneous media}
Let $K$ be a compact set of $\R^N$ with $N\geq2$, and let $\left|\cdot\right|$ be the Euclidean norm in $\R^N$. We are interested in the qualitative properties of positive solutions  $u$ to the  following problem
\begin{align}
\left\{
\begin{array}{rl}
Lu+ f(u)=0  & \text{in }\overline{\R^N\setminus K}, \vspace{3pt}\\
0\leq u \leq 1 & \text{in }\overline{\R^N\setminus K}, \vspace{3pt}\\
u(x)\to1  & \text{as }|x|\to+\infty,
\end{array}
\right.  \label{LIM:u0}
\end{align}
where $f$ is a bistable nonlinearity with $f(0)=f(1)=0$ and $L$ is the nonlocal operator
\begin{equation}\label{DEF:L}
Lu(x):=\int_{\R^N\setminus K} J(x-y)( u(y)-u(x))\mathrm{d}y,
\end{equation}
with $J\in L^1(\R^N)$ a non-negative kernel with unit mass. The precise assumptions on $f$ and $J$ will be given later on.

This type of model naturally arises in the  study of the behavior of  particles  evolving in a heterogeneous medium.
The typical kind of problem we have in mind comes from population dynamics. 
In this setting, the movement of the individuals is modelled by a stochastic process that is defined in a domain that
possesses several inaccessible regions (reflecting the heterogeneity of the environment).
At the macroscopic level, the corresponding density of population $u(t,x)$ satisfies a reaction-diffusion equation that is defined outside a set $K$, which acts as an obstacle. When the individuals follow isotropic Poisson jump processes,  this reaction-diffusion equation is given by
\begin{equation}\label{peq}
\frac{\partial u}{\partial t}=Lu+f(u) \quad \text{in }\R\times \R^N\setminus K,
\end{equation}
and the solutions to \eqref{LIM:u0} are  particular stationary solutions to \eqref{peq}.

In recent years, much attention has been paid to the case of Brownian diffusion. In this situation, the reaction-diffusion equation takes the form
\begin{align}
\label{eqlocale}
\left\{\begin{array}{rl}
\displaystyle\frac{\partial u}{\partial t}=\Delta u+f(u) & \text{in }\R\times\R^N\setminus K,\vspace{3pt}\\
\nabla u\cdot\nu=0 & \text{on }\R\times\partial K.
\end{array}
\right.
\end{align}

This problem was first studied by Berestycki, Hamel and Matano in \cite{BHM}. There, it is shown that there exists a solution
to \eqref{eqlocale} that satisfies $0<u(t,x)<1$ for all $(t,x)\in\R\times\overline{\R^N\setminus K}$,
as well as a classical solution, $u_\infty$, to
\begin{align}
\left\{
\begin{array}{rl}
\Delta u_\infty+f(u_\infty)=0 & \text{in }\overline{\R^N\setminus K},\vspace{3pt}\\
\nabla u_\infty\cdot\nu=0 & \text{on }\partial K, \vspace{3pt}\\
0\le u_\infty\leq 1 & \text{in }\overline{\R^N\setminus K}, \vspace{3pt}\\
u_\infty(x)\to1 & \text{as }|x|\to+\infty.
\end{array} \label{EQ:BHM0}
\right.
\end{align}

This latter solution is actually obtained as the large time limit of $u(t,x)$; more precisely:
$$ u(t,x)\to u_\infty(x)\hbox{ as }t\to\infty,\hbox{ locally uniformly in }x\in\overline{\R^N\setminus K}. $$
In addition, they were able to classify the solutions $u_\infty$ to \eqref{EQ:BHM0} under some geometric assumptions on $K$.
When the obstacle $K$ is either starshaped of directionally convex (see \cite[Definition 1.2]{BHM}), they  prove   that the solutions to \eqref{EQ:BHM0} are actually identically equal to $1$ in the whole set $\overline{\R^N\setminus K}$.
This was further extended to more complex obstacles by Bouhours who showed a sort of ``stability" of this Liouville type property  with respect to small regular perturbations of the obstacle, see \cite{Bouhours}.
From the biological standpoint, this means that, after some large time, \emph{the population tends to occupy the whole space}.

Yet,  when the domain is no longer starshaped nor directionally convex but merely simply connected, it is shown in \cite{BHM} that \emph{this Liouville type property may fail}. In other words, the geometry of the domain may force the population to diffuse heterogeneously in space, even after some large time.

It is expected that \eqref{LIM:u0} and \eqref{EQ:BHM0} share some common properties. In particular, some of the results obtained for \eqref{EQ:BHM0} should, to some extent, hold true as well for \eqref{LIM:u0}.

Recently, Brasseur et al. \cite{BCHV} have shown that \eqref{LIM:u0} enjoys a similar Liouville type property
when $K$ is convex (or close to being convex) and when the data $f$ and $J$ satisfy some rather mild
assumptions.
That is, any solution $u$ to \eqref{LIM:u0} is identically equal to $1$ in the whole set $\overline{\R^N\setminus K}$.
%
They also point out that this cannot be expected for general obstacles since one can easily find counterexamples when $K$ is no longer simply connected. Indeed, take for instance $K=\overline{\mathcal{A}(1,2)}=\overline{B_2}\setminus B_1$ and suppose that $J$ is supported in $B_{1/2}$. Then,  the function $u$ defined by
$$u(x)=\left\{
\begin{array}{cl}
1  & \text{ if }x\in\R^N\setminus B_2, \vspace{3pt}\\
0  & \text{ if }x\in\overline{B_1},
\end{array}
\right.$$
is a continuous solution to \eqref{LIM:u0}; yet, $u$ is not identically $1$ in the whole set $\overline{\R^N\setminus K}$.
In view of this, it is natural to ask:
\begin{align*}
\begin{array}{l}
\textit{what are the optimal geometric assumptions on $K$ ensuring}\\
\textit{that \eqref{LIM:u0} enjoys such a Liouville property?}
\end{array}
\end{align*}

\noindent So far, this question remains open.

In this paper, our main concern is to find out whether it is possible to construct a \emph{nontrivial simply connected} obstacle $K$, as well as data $f$ and $J$, for which \eqref{LIM:u0} has a continuous solution $u$ which is not identically equal to $1$.

Note that this is actually a quite reasonable question. Indeed, since the Liouville property does not hold true on annuli it is quite natural to expect counterexamples on simply connected obstacles which are ``$\eps$-close" to an annulus. We will see that this is indeed the case.  Precisely, we will construct a family of simply connected compact sets $K_\eps$ and data $f_\eps$ and $J_\eps$ for which the solution to \eqref{LIM:u0} need not be identically equal to $1$.

\subsection{Main results}
Before we state our main results, let us first specify the assumptions made all along this paper.
We will assume that $J$ is such that
\be\label{C01}\left\{\baa{l}
J\in L^1(\R^N)\hbox{ is a non-negative, radially symmetric kernel with unit mass},\vspace{3pt}\\
\hbox{there are }0\le r_1<r_2\hbox{ such that }J(x)>0\hbox{ for a.e. }x\hbox{ with }r_1<|x|<r_2,\vspace{3pt}\\
M_1(J):=\displaystyle\int_{\R^N}J(x)|x|\hspace{0.05em}\mathrm{d}x<+\infty\hbox{  and  }J\in W^{1,1}(\R^N),\eaa\right.
\ee
and that $f\in C^1([0,1])$ is a ``bistable" nonlinearity, namely
\be\label{C02}
\left\{
\begin{array}{l}
\exists\,\theta\in(0,1),\ \ f(0)=f(\theta)=f(1)=0,\ \ f<0\hbox{ in }(0,\theta),\ \ f>0\hbox{ in }(\theta,1),\vspace{3pt}\\
\displaystyle\int_{0}^1f(s)\hspace{0.05em}\mathrm{d}s>0,\ \ f'(0)<0,\ \ f'(\theta)>0,\ \ f'(1)<0,\ \ f'<1\text{ in }[0,1].
\end{array}
\right.
\ee
Our first result reads as follows
\begin{theorem}\label{TH:COUNTEREXAMPLE}
Let $N\ge 2$. Then, there are smooth (non-starshaped) simply connected compact obstacles $K$ and data $f$ and $J$ satisfying~\eqref{C01} and~\eqref{C02} for which problem~\eqref{LIM:u0} has a positive nonconstant solution $u\in C(\overline{\R^N\setminus K},[0,1])$.
\end{theorem}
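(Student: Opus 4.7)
The plan is to perturb the elementary ``annular'' counterexample sketched in the introduction (where $K_0=\overline{B_R}\setminus B_r$, $J$ is supported in $B_\rho$ with $\rho<R-r$, and $u_0=\mathbf{1}_{\{|x|>R\}}$ solves \eqref{LIM:u0}) into a simply connected obstacle, by cutting a thin radial corridor through the annular wall while keeping the kernel's interaction range $\rho$ much smaller than the corridor's width $\eps$. Heuristically, for $\rho/\eps$ small the front cannot propagate through the narrow corridor, so the ``$0$ inside, $1$ outside'' profile should survive on the simply connected perturbed obstacle as a slightly deformed stationary solution.

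\emph{Construction.} Let $K_\eps$ be a smoothing of $(\overline{B_R}\setminus B_r)\setminus T_\eps$, where $T_\eps$ is an open tubular neighborhood of width $\eps$ around a radial segment joining $\partial B_r$ to $\partial B_R$. Then $K_\eps$ is smooth, compact, simply connected (with connected complement) and non-starshaped. Fix $f$ as in \eqref{C02} and $J$ as in \eqref{C01} with $\operatorname{supp}J\subset\overline{B_\rho}$, $\rho\ll\eps$. The heart of the proof is the construction of an ordered pair $\underline u_\eps\leq\overline u_\eps$ in $C(\overline{\R^N\setminus K_\eps},[0,1])$ where $\underline u_\eps$ is a sub-solution of \eqref{LIM:u0}, $\overline u_\eps$ is a super-solution, both tending to $1$ at infinity, and $\overline u_\eps\leq u^*<1$ on a subdomain $\Omega\subset B_r$ of the inner cavity for some $u^*\in(0,\theta)$. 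The sub-solution is built by setting $\underline u_\eps\equiv 0$ on (most of) the inner cavity, $\underline u_\eps\equiv 1$ on (most of) the outer region, and monotonically interpolating through the corridor; the super-solution is a ``capped bump'', equal to $u^*$ on $\Omega$ and to $1$ outside a thicker transition layer around $\partial\Omega$. Each of the inequalities $L\underline u_\eps+f(\underline u_\eps)\geq 0$ and $L\overline u_\eps+f(\overline u_\eps)\leq 0$ is automatic away from the transition zones and must be verified pointwise inside them by a careful tuning of $\eps$, $\rho$ and the profiles.

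\emph{Conclusion.} With the ordered barriers in hand, the standard monotone iteration between $\underline u_\eps$ and $\overline u_\eps$, or equivalently the long-time limit of the parabolic flow \eqref{peq} started at $\underline u_\eps$ and capped above by $\overline u_\eps$, yields a solution $u_\eps$ of \eqref{LIM:u0} with $\underline u_\eps\leq u_\eps\leq\overline u_\eps$; hence $u_\eps(x)\to 1$ as $|x|\to\infty$ and $u_\eps\leq u^*<1$ on $\Omega$, so $u_\eps$ is nonconstant as required. The main technical obstacle is this joint construction of the two barriers: the sub-solution inequality must beat the negativity of $f$ in the pre-threshold part of the corridor via the nonlocal diffusion (requiring $\eps/\rho$ large); the super-solution inequality must dominate the leakage of $J$ across $\partial\Omega$ by the bistable gap $|f(u^*)|$; and the ordering $\underline u_\eps\leq\overline u_\eps$ must be preserved throughout, which is where the geometry of the corridor plays its crucial quantitative role.
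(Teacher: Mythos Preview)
Your scaling is backwards, and this is a genuine obstruction rather than a detail left to fill in. You take the kernel range $\rho$ much smaller than the corridor width $\eps$, so that on the diffusion scale the corridor is \emph{wide}. In that regime the ``capped bump'' super-solution cannot exist. On the part of the transition layer where $\overline u_\eps(x)=s\in(\theta,1)$ the required inequality reads $L\overline u_\eps(x)\le -f(s)<0$; but since $\rho\ll\eps$ and $\rho\ll r$, the interaction ball $B_\rho(x)$ lies entirely in $\R^N\setminus K_\eps$, so the obstacle is invisible to $L$ at $x$ and you are asking for a free, essentially one-dimensional stationary barrier connecting a sub-threshold value to $1$. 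With $\int_0^1 f>0$ no such barrier exists: the monotone front is a travelling wave with speed $c>0$, hence $L\phi+f(\phi)=c\,\phi'>0$, and any profile with those asymptotics is a \emph{sub}-solution. Equivalently, a planar wave sub-solution placed below $\overline u_\eps$ sweeps through and forces $\overline u_\eps\equiv 1$. No tuning of $\Omega$, $u^*$ or the interpolation repairs this; the $1$-state always leaks through a wide corridor.

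The paper works in the \emph{opposite} regime. After rescaling to $J_\eps(z)=\eps^{-N}J(z/\eps)$ and $f_\eps=\eps^2 f$, the channel is taken of width $\sim\eps^{N/(N-1)}\ll\eps$, i.e.\ \emph{narrow} compared to the interaction range, and the inner cavity has small radius $R_0<R_0^*(J,f)$. The super-solution is not an explicit barrier but comes from a variational argument: a nonlocal Poincar\'e inequality (after Ponce) shows that $\mathds{1}_{B_{R_0}}$ is a strict local minimiser of the energy on $B_{R_0}$ with gap of order $\eps^2$, while the channel contributes only $O(\eps^{N+1})$ to the energy on $B_R\setminus K_\eps$; the local minimum therefore persists, the minimiser solves an auxiliary problem on $B_R\setminus K_\eps$, and a continuous extension produces the global super-solution. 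The sub-solution, by contrast, is a one-dimensional travelling wave shifted so far that it vanishes near the obstacle, so the corridor plays no role there. In short, it is precisely the \emph{narrowness} of the channel relative to the kernel that makes the diffusive leakage negligible --- the reverse of your heuristic.
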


The obstacles constructed in Theorem~\ref{TH:COUNTEREXAMPLE} are  almost of the same nature as those given in~\cite{BHM} for the local case. Namely, we consider an annulus $\mathcal{A}$ into which a small channel is pierced, see Figure~\ref{figKeps} below for a visual illustration.

By contrast with the classical reaction-diffusion, the operator $L$ does \emph{not} enjoy strong compactness properties and has no regularising effects. So our construction is \emph{not} a  simple adaptation of the techniques of proof used for the local problem \eqref{EQ:BHM0}. One of the novelties of this paper is that we show how to circumvent these issues. As we shall explain in the sequel, our argument is in fact general enough to recover the local problem as a limit case (see our remarks below).

Let us briefly describe our approach. Our strategy relies essentially on two ingredients. First, we take advantage of the fact that the kernel $J$ and the nonlinearity $f$ may be chosen at our convenience. That is, instead of considering the problem \eqref{LIM:u0}, we can consider a rescaled version of \eqref{LIM:u0} given an appropriate choice of $J$.
In our setting, $J$ will be such that
\begin{align}\label{J-assum}
J\in L^2(\R^N),~\mathrm{supp}(J) =B_{r}\text{ for some }r>0,\text{ and }J\text{ is radially }\textit{non-increasing.}
\end{align}
Then, given a small parameter $\eps$, we look for a nonconstant positive solution $u_\eps$ to
\begin{align}
\ds{\int_{\R^N\setminus K} J_\eps(x-y)(u_\eps(y)-u_\eps(x))\mathrm{d}y}+ f_\eps(u_\eps(x))=0 \quad\text{in }\overline{\R^N\setminus K}, \label{eq-eps}
\end{align}
that further satisfies $0\leq u_\eps \leq 1$ in $\overline{\R^N\setminus K}$ and $u_\eps(x)\to1$ as $|x|\to+\infty$, where
$$f_\eps(s):=\eps^2f(s)\ \text{ and }\ J_\eps(z)=\frac{1}{\eps^N}J\left(\frac{z}{\eps}\right).$$
In order to prove Theorem \ref{TH:COUNTEREXAMPLE}, we only need to show that, for some $\eps>0$, there is some obstacle $K_\eps$ such that  \eqref{eq-eps} admits a positive nonconstant solution $u_\eps$.

Second, we consider a well-chosen family of smooth simply connected obstacles $(K_\eps)_{0<\eps<1}$ that look like an annulus with a tiny channel of diameter of the order of $\eps^{N/(N-1)}$ pierced in it (see the Figure~\ref{figKeps}). Given such a family, we prove that, for $\eps$ small enough, \eqref{eq-eps} indeed admits a positive nonconstant continuous solution. More precisely, we prove the following
 \begin{theorem}\label{TH:eps}
Let $N\ge 2$. Let $J$ and $f$ be such that \eqref{C01}, \eqref{C02} and \eqref{J-assum} hold. Then, there exist $\eps_*>0$ and a family of smooth simply connected obstacles $(K_\eps)_{0<\eps<1}\subset\R^N$ such that, for all $0<\eps<\eps^*$, there is a positive nonconstant solution $u_\eps\in C(\overline{\R^N\setminus K_\eps},[0,1])$ to \eqref{eq-eps}.
\end{theorem}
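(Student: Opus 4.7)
The plan is to construct $u_\eps$ by a monotone iteration scheme sandwiched between a subsolution $\underline u_\eps$ and the constant supersolution $\overline u\equiv 1$, and further confined from above by an auxiliary supersolution $\overline v_\eps$ that is bounded away from $1$ on a fixed ball inside the hole of the annulus, which is what will produce the nonconstancy. The family $(K_\eps)$ is obtained by drilling a smooth axisymmetric tubular channel of cross-sectional radius $\sim\eps^{N/(N-1)}$ through the annulus $\mathcal A=\overline{B_2}\setminus B_1$, from $\partial B_1$ to $\partial B_2$: each $K_\eps$ is then smooth, compact, simply connected, non-starshaped, and has connected complement. The driving heuristic is the zero-th order solution: if $\mathrm{supp}(J_\eps)\subset B_\eps$ is strictly smaller than the annulus thickness $1$, the step function $\mathbf 1_{\R^N\setminus\overline B_2}$ solves the analogous problem on the unperturbed $\R^N\setminus\mathcal A$ (the two components are kernel-decoupled and $f(0)=f(1)=0$), and we look for $u_\eps$ as a continuous perturbation of this step on $\R^N\setminus K_\eps$. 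The exponent $N/(N-1)$ is critical: it makes the cross-sectional $(N-1)$-volume $\sim\eps^N$ of the channel comparable to the $N$-volume $\sim\eps^N$ of $\mathrm{supp}(J_\eps)$, so that the kernel leakage through the channel is precisely of the scale that can be balanced against the $\eps^2$ bistable term.

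Concretely, fix $M\ge\|f'\|_{L^\infty([0,1])}$ so that $s\mapsto Ms+f(s)$ is nondecreasing on $[0,1]$, and recast \eqref{eq-eps} as the fixed-point equation $u=Tu$ with
\[
Tu(x):=\frac{\int_{\R^N\setminus K_\eps}J_\eps(x-y)u(y)\,\mathrm dy+\eps^2\bigl(Mu(x)+f(u(x))\bigr)}{c_\eps(x)+\eps^2 M},\quad c_\eps(x):=\int_{\R^N\setminus K_\eps}J_\eps(x-y)\,\mathrm dy.
\]
Then $T$ is order-preserving on $C(\overline{\R^N\setminus K_\eps},[0,1])$, $T\overline u=\overline u$, and $T\phi\ge\phi$ (resp.\ $\le\phi$) is equivalent to $L\phi+\eps^2 f(\phi)\ge 0$ (resp.\ $\le 0$). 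For $\underline u_\eps$, I take a smooth axisymmetric profile equal to $0$ on $\overline B_1$ outside a neighbourhood of the tube, equal to $1-\delta$ on $\R^N\setminus B_{2+c\eps}$ for a fixed small $\delta\in(0,1-\theta)$, and interpolating monotonically through values $\ge\theta+\eta$ across an $O(\eps)$-thin collar around the annulus boundary (so as to avoid the range $(0,\theta)$ where $f<0$). I build $\overline v_\eps$ symmetrically: $\overline v_\eps\equiv\theta-\eta$ on the central part of the inner hole, $\overline v_\eps\equiv 1$ on $\R^N\setminus B_{2+c\eps}$, with a sharp monotone transition of width $O(\eps)$. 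The inequalities $L\underline u_\eps+\eps^2 f(\underline u_\eps)\ge 0$ and $L\overline v_\eps+\eps^2 f(\overline v_\eps)\le 0$ are verified region by region: on the plateaus they follow from the sign of $f$ and the positivity/vanishing of $L$; in the transition layers and inside the tube, the (otherwise uncontrolled) nonlocal contribution of $L$ is of order the kernel mass intercepted by the sharp interface or by the tube cross-section, which is small by the $\eps^{N/(N-1)}$-scaling, and is absorbed by engineering the plateau values so that the dominant term of $L+\eps^2 f$ has the correct sign.

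Starting the iteration $u^{n+1}=Tu^n$ from $u^0=\underline u_\eps$ produces a nondecreasing sequence bounded above by $\overline v_\eps$, whose continuous pointwise limit $u_\eps$ solves \eqref{eq-eps}; it tends to $1$ at infinity (by a standard Liouville-type argument for the bistable equation on $\R^N$, applied once $|x|$ is so large that $K_\eps$ disappears from the kernel's range) and lies $\le\theta-\eta$ on a fixed ball inside the hole, so that $u_\eps\not\equiv 1$. The main difficulty is the construction and verification of both comparison functions in and around the thin tube: the operator $L$ has neither regularising effect nor a local maximum principle, so all estimates must be carried out by direct integration of $J_\eps$ against piecewise-defined profiles, and the precise matching of the channel's cross-sectional volume with the kernel's support volume, encoded by the exponent $N/(N-1)$, is what ultimately closes the sub/supersolution inequalities. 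A wider channel would let the bistable dynamics pull $u_\eps$ up to $1$ throughout (destroying nonconstancy), while a narrower one would kernel-disconnect the complement and reduce the example to the trivial annular case.
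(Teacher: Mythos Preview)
Your overall scheme (monotone iteration between an ordered pair of barriers) is the same as the paper's, but the heart of the proof --- the construction of the super-solution that forces nonconstancy --- has a genuine gap.

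The claim that an explicit smooth profile $\overline v_\eps$ with an $O(\eps)$-wide transition from $\theta-\eta$ to $1$ satisfies $L\overline v_\eps+\eps^2 f(\overline v_\eps)\le 0$ is not correct as stated. In a full-dimensional region (say the collar outside $B_2$, or at the outer mouth of the tube) the operator $L_\eps$ acting on a profile with transition width $O(\eps)$ is of size $O(1)$, not ``small'': it behaves like $\eps^2$ times a second derivative of magnitude $\eps^{-2}$. At the convex end of the transition (where $\overline v_\eps$ starts to rise from the plateau $\theta-\eta$) one has $L\overline v_\eps>0$ of order one, while $\eps^2 f(\overline v_\eps)=-O(\eps^2)$, so the inequality $L\overline v_\eps+\eps^2 f(\overline v_\eps)\le 0$ fails. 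More seriously, wherever $\overline v_\eps$ takes values in $(\theta,1)$ you need $L\overline v_\eps<0$ (strictly, with margin $\eps^2 f>0$), and a monotone profile meeting the outside plateau at $1$ cannot achieve this near the junction, where the $J_\eps$-average of neighbouring values is visibly \emph{larger} than $\overline v_\eps(x)$. The ``$\eps^{N/(N-1)}$-smallness'' you invoke only controls the kernel mass trapped in the thin tube; it says nothing about the transition in the full-dimensional outer region, which is where the super-solution inequality actually has to be won. A symmetric issue afflicts your sub-solution: a continuous function going from $0$ to $1-\delta$ cannot ``avoid the range $(0,\theta)$'', and at the concave end of that transition (near the outer plateau $1-\delta$) one gets $L\underline u_\eps<0$ of order one while $\eps^2 f(\underline u_\eps)=O(\eps^2)$, so $L\underline u_\eps+\eps^2 f(\underline u_\eps)\ge 0$ fails.

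This is precisely the difficulty the paper singles out. Its resolution is not via explicit profiles: the super-solution is obtained by first \emph{solving} an auxiliary equation on $B_R\setminus K_\eps$ through a variational argument. One shows that the energy $\mathcal E_{\eps,R}$ has a nontrivial local minimiser near $\mathbf 1_{B_{R_0}}$; the key step is a nonlocal Poincar\'e inequality (Ponce) on the small ball $B_{R_0}$, and it is here that the inner radius must satisfy $R_0<R_0^*(J,f)$ --- your choice $R_0=1$ is not free. The $\eps^{N/(N-1)}$ tube width enters only to make the energy leakage through the channel of order $\eps^{N+1}=o(\eps^2)$, so that the minimiser on the perforated domain stays close to the one on the pure ball; it is not a ``critical'' balance against the bistable term in the way you describe. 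The resulting exact solution on $B_R\setminus K_\eps$ is then continuously extended to a global super-solution by a $W^{1,1}$ estimate on $J$. For the sub-solution the paper does not attempt an explicit profile either: it uses a genuine one-dimensional travelling front (which exists by known results for~\eqref{C01}--\eqref{C02}), truncated at $0$ and translated far enough to the right that its support does not see $K_\eps$ through the kernel at all, so the obstacle causes no error term whatsoever.
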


Due to the lack of a strong regularising property of \eqref{eq-eps}, the construction of $u_\eps$ relies essentially on elementary arguments. In particular, we obtain  a solution $u_\eps$ to \eqref{eq-eps} using an adequate monotone iterative scheme and elementary estimates. The main difficulty in our proof lies in the construction of an adequate pair of ordered continuous sub- and super-solution in a context where the equation \eqref{LIM:u0} does not allow the use of traditional schemes based on compactness arguments.  To cope with this major difficulty,  we make a detailed construction of the obstacle $K_\eps$ and design it in such a way that we still can obtain standard $L^2$-estimates by elementary means. This requires a detailed analysis  of all the parameters involved at each steps of our construction,  especially  when  we construct our super-solution. To construct our super-solution we rely on the fact that a solution $u_\eps$ to \eqref{eq-eps} satisfies in particular
\begin{equation}\label{eq2-eps}
\frac{1}{\eps^2}\int_{\R^N\setminus K_\eps}J_\eps(x-y)(u_\eps(y)-u_\eps(x))\mathrm{d}x+f(u_\eps(x))=0,
\end{equation}
and, from there, relying essentially on the Bourgain-Brezis-Mironescu characterisation of Sobolev spaces (see e.g. \cite{BBM,Ponce2004}), we can interpret the first term on the left-hand side as a \emph{nonlocal approximation} of $\Delta u$ in the sense that its energy approximates the $L^2$-variation of $u$. This, in turn, with a pertinent choice of $K_\eps$ and a well-chosen auxiliary problem, allows one to derive  \textit{a priori} bounds to construct a super-solution by means of variational methods.


A striking consequence of our construction is that it adapts almost straightforwardly to
other situations. For example, it applies to the standard reaction-diffusion equation
\eqref{EQ:BHM0} providing so an alternative proof of the existence of a counterexample.
But it also extends to broader classes of nonlocal operators where the dispersal process need not be isotropic but instead depends on the geodesic distance between points in $\overline{\R^N\setminus K}$.
Indeed, our proof also adapts (with almost no changes) to operators of the form
\begin{align}
L_{\mathrm{g}} u(x):=\int_{\R^N\setminus K}\widetilde{J}(d_\mathrm{g}(x,y))(u(y)-u(x))\mathrm{d}y, \label{Op:Disp}
\end{align}
where $d_{\mathrm{g}}(\cdot,\cdot)$ is the geodesic distance on $\overline{\R^N\setminus K}$ and $\widetilde{J}\in L_{\mathrm{loc}}^1(0,\infty)$ is such that
\begin{align}
\sup_{x\in\R^N\setminus K}\int_{\R^N\setminus K}\widetilde{J}(d_{\mathrm{g}}(x,y))\hspace{0.05em}\mathrm{d}y<\infty, \label{J:geo}
\end{align}
and $z\mapsto \widetilde{J}(|z|)$ satisfies \eqref{C01}.

More precisely, we have
\begin{theorem}\label{TH:COUNTEREXAMPLE2}
Let $N\ge 2$. Then, there are smooth (non-starshaped) simply connected compact obstacles $K$ and data $f$ and $\widetilde{J}$ satisfying~\eqref{C02} and~\eqref{J:geo} for which the problem
\begin{align}
\left\{
\begin{array}{rl}
L_{\mathrm{g}}u+ f(u)=0  & \text{in }\overline{\R^N\setminus K}, \vspace{3pt}\\
u(x)\to1  & \text{as }|x|\to+\infty,
\end{array}
\right. \label{EQ:geo}
\end{align}
has a solution $u\in C(\overline{\R^N\setminus K},[0,1])$ which is not identically equal to $1$ in $\overline{\R^N\setminus K}$.
\end{theorem}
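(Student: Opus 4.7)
The plan is to repeat the argument of Theorem~\ref{TH:eps} almost verbatim, with the Euclidean operator $L$ replaced by $L_{\mathrm{g}}$. Concretely, I would work with the same family of smooth simply connected obstacles $(K_\eps)_{0<\eps<1}$---an annulus pierced by a thin channel of diameter $\sim\eps^{N/(N-1)}$---and apply the same rescaling $f_\eps:=\eps^2 f$, $\widetilde{J}_\eps(z):=\eps^{-N}\widetilde{J}(z/\eps)$. A continuous nonconstant solution $u_\eps$ of
\[
\int_{\R^N\setminus K_\eps}\widetilde{J}_\eps(d_{\mathrm{g}}(x,y))\,(u_\eps(y)-u_\eps(x))\,\mathrm{d}y + f_\eps(u_\eps(x))=0 \quad \text{in } \overline{\R^N\setminus K_\eps}
\]
will then translate back, via the reverse rescaling, to the desired counterexample to \eqref{EQ:geo}. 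Without loss of generality I would also take $\widetilde{J}$ with compact support in $B_r$ and with a radially non-increasing profile, so that it satisfies the supplementary conditions of \eqref{J-assum}.

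The central geometric observation is that $L_{\mathrm{g}}$ and $L$ actually coincide at most points. Since $\widetilde{J}_\eps$ is supported where $d_{\mathrm{g}}(x,y)\le\eps r$, for every $x$ at Euclidean distance at least $\eps r$ from the channel the ball $B(x,\eps r)$ meets $K_\eps$ only in a locally convex patch of the smooth annular boundary, so that for every $y\in B(x,\eps r)\cap(\R^N\setminus K_\eps)$ the straight segment $[x,y]$ stays in $\R^N\setminus K_\eps$; hence $d_{\mathrm{g}}(x,y)=|x-y|$ there, and $L_{\mathrm{g}}u(x)=Lu(x)$ on the complement of an $O(\eps)$-neighborhood of the channel. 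In addition, $d_{\mathrm{g}}\ge|\cdot|$ holds always, and the monotonicity of $\widetilde{J}$ yields the global pointwise bound $\widetilde{J}_\eps(d_{\mathrm{g}}(x,y))\le\widetilde{J}_\eps(|x-y|)$.

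Given this, I would re-use the ordered pair $(\underline{u}_\eps,\overline{u}_\eps)$ of sub- and super-solutions for the Euclidean problem produced in Theorem~\ref{TH:eps}, and check that they remain sub- and super-solutions for $L_{\mathrm{g}}$. Outside the $O(\eps)$-neighborhood of the channel this transfers automatically by the previous step. Inside that tubular region---a thin tube wrapped around a channel of diameter much smaller than $\eps$---both $\underline{u}_\eps$ and $\overline{u}_\eps$ are very close to their limiting constants, so the discrepancy $L_{\mathrm{g}}-L$ contributes only a small pointwise error which can be absorbed into the strict sub/super-solution inequalities established in the Euclidean case, thanks to the Bourgain--Brezis--Mironescu type $L^2$-bounds used to produce $\overline{u}_\eps$. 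The same monotone iterative scheme as in Theorem~\ref{TH:eps} then delivers a continuous solution $u_\eps$ sandwiched between $\underline{u}_\eps$ and $\overline{u}_\eps$, which is nonconstant by construction.

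The main obstacle is precisely this verification inside the channel neighborhood, where $d_{\mathrm{g}}(x,y)$ can be drastically larger than $|x-y|$---a short Euclidean segment may cross $K_\eps$ while the shortest path in $\overline{\R^N\setminus K_\eps}$ must detour around or thread through the narrow channel. The resolution comes from the calibrated scaling of the obstacle: the channel diameter $\sim\eps^{N/(N-1)}$ is engineered to be much smaller than the kernel range $\sim\eps$, which keeps the Sobolev-type bounds underlying the super-solution in Theorem~\ref{TH:eps} in force with enough slack to accommodate the additional error produced by the narrow channel.
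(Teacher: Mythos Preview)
Your strategy differs from the paper's, and the difference matters. The paper does \emph{not} reuse the Euclidean sub/super-solutions and then control the discrepancy $L_{\mathrm{g}}-L$ near the channel. Instead it reruns the entire construction of Section~\ref{section-supersol} with $L_{\mathrm{g}}$ in place of $L$ from the start, observing (Remarks~\ref{rem-geod}, \ref{RK:geo2}, \ref{RK:geo3}) that every quantitative step---the lower bound on $\inf\mathcal{J}_\eps$ in Proposition~\ref{LE:CONTINUE}, the Poincar\'e-type estimate on $B_{R_0}$ in Proposition~\ref{Poincare}, the extension procedure on $B_{R+\sigma}\setminus B_R$, and the sub-solution construction in $H_*$---is carried out on a \emph{convex} subregion where $d_{\mathrm{g}}(x,y)=|x-y|$, so the estimates transfer verbatim. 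The only new point requiring care is the strong-maximum-principle iteration in Proposition~\ref{nmeClm}, which must be redone with geodesic balls.

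Your proposed shortcut has a real gap. The super-solution $\overline{u}_\eps$ in $B_R\setminus K_\eps$ is obtained from a \emph{solution} $u_{\eps,R}$ of the auxiliary equation~\eqref{KTR1}; the inequality $L_\eps\overline{u}_\eps+\tilde f_\eps(\overline{u}_\eps)\le0$ there is an equality, so there is no strict margin into which to absorb the error term $(L_{\mathrm{g}}-L)\overline{u}_\eps$. Your claim that near the channel ``both $\underline{u}_\eps$ and $\overline{u}_\eps$ are very close to their limiting constants'' is also unsupported: all one knows is that $v_{\eps,R}=1-\overline{u}_\eps$ lies in an $L^2$-ball of radius $\delta_0$ around $\mathds{1}_{B_{R_0}}$, and on the channel $F_\eps$ (which has volume $O(\eps^N)$) an $L^2$ bound of size $\delta_0$ gives no useful pointwise control. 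Finally, the assertion that $d_{\mathrm{g}}=|\cdot|$ outside an $O(\eps)$-tube around the channel is imprecise near the inner sphere $\partial B_{R_0}$, where the obstacle boundary is concave; this is another reason the paper avoids comparing $L_{\mathrm{g}}$ to $L$ pointwise and instead restricts each estimate to a convex piece.
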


The obstacle $K$ and the data $f$ and $J\,(=\widetilde{J}(\left|\cdot\right|))$ constructed at Theorem \ref{TH:COUNTEREXAMPLE2} are exactly the same as in Theorem \ref{TH:COUNTEREXAMPLE}.

Problem \eqref{EQ:geo} is of interest in its own right. It gives an alternative way to describe the evolution of particles within a perforated domain which, in some situations, may be regarded as more realistic. The point here is that particles \emph{cannot} travel through $K$ (as is it the case for problem \eqref{LIM:u0}). Instead, they are compelled to ``bypass" $K$ as if it was a material obstacle. This particularity may be helpful to study the dynamics of some species (such as worms or spores) for which this behavior is well-suited.

When needed we will state in side remarks the necessary changes to make to the proofs in order to handle this type of dispersal processes.

\begin{rem}
It turns out that the techniques of proof used in \cite{BCHV} to establish the Liouville property of \eqref{LIM:u0} for convex domains also apply to this modified setting (at least when $J$ is non-increasing), but we leave this to a subsequent paper.
\end{rem}

The paper is organized as follows. After describing our notations, we recall some results from the literature in Section \ref{section-pre}. In Section \ref{section-data}, given a pair $(J,f)$ we construct an adequate family of obstacles.  Then, in Section \ref{section-supersol}, we construct some particular super-solutions to the problem \eqref{eq-eps}.  Finally, in Section \ref{section-proof}, we use the super-solution constructed at Section~\ref{section-supersol} to prove Theorem \ref{TH:eps}.
\subsection*{Notations}

Let us list a few notations that will be used throughout the paper.

As usual, $\S^{N-1}$ denotes the unit sphere of $\R^N$ and $B_R(x)$ the open Euclidian ball of radius $R>0$ centred at $x\in\R^N$ (when $x=0$, we simply write $B_R$).
We denote by $\mathcal{A}(R_1,R_2)$ the open annulus $B_{R_2}\setminus \overline{B_{R_1}}$.

For a compact set $\Omega\subset\R^N$, we denote by $\mathrm{diam}(\Omega)$ its diameter, given by
$$ \mathrm{diam}(\Omega):=\sup_{x,y\in\Omega}|x-y|. $$

The $N$-dimensional Hausdorff measure will be denoted by $\mathscr{H}^{N}$.
For a measurable set $E\subset\R^N$, we denote by $|E|$ its Lebesgue measure and by $\mathds{1}_E$ its characteristic function. If $0<|E|<\infty$ and if $g:\R^N\to\R$ is locally integrable, we denote by
$$ \fint_E g(x)\hspace{0.05em}\mathrm{d}x=\frac{1}{|E|}\int_E g(x)\hspace{0.05em}\mathrm{d}x, $$
the average of $g$ in the set $E$. Also, we denote by $L^p(E)$, $1\leq p \leq\infty$, the Lebesgue space of (equivalence classes of) measurable functions $g$ for which the $p$-th power of the absolute value is Lebesgue integrable when $p<\infty$ (resp. essentially bounded when $p=\infty$).

\section{Preliminaries}\label{section-pre}

In this section, we recall some known results that will be used throughout the paper. In most cases, we will omit their proofs and point the interested reader to the relevant references.

We first state a general existence result.

\begin{lemma}\label{Lem-iter}
Assume that $f$ and $J$ satisfy \eqref{C01} and \eqref{C02}. Let $K\subset \R^N$ be a compact set and let $\underline{u},\overline{u}\in C(\R^N\setminus K)$ be such that
\begin{equation*}
\left\{\baa{ll}
L\overline{u}+f(\overline{u})\leq 0 & \mbox{in } \R^N\setminus K,\vspace{3pt}\\
L\underline{u}+f(\underline{u})\ge 0 & \mbox{in } \R^N\setminus K.\eaa\right.
\end{equation*}
Assume, in addition, that
\begin{align}
\limsup_{|x|\to\infty}\,\underline{u}(x)=\lim_{|x|\to\infty}\overline{u}(x)=1, \label{SousSur:iter1}
\end{align}
and that
\begin{align}
0\leq\underline{u}\le \overline{u}\leq1 \quad{\mbox{in }}\R^N\setminus K. \label{SousSur:iter2}
\end{align}
Then, there exists $u \in L^{\infty}(\R^N\setminus K)$ such that
\begin{equation*}
\left\{\begin{array}{rl}
Lu+f(u)= 0  & {\mbox{in }} \R^N\setminus K,\vspace{3pt}\\
\underline{u}\le u\le \overline{u} & {\mbox{in }} \R^N\setminus K.\end{array}
\right.
\end{equation*}
\end{lemma}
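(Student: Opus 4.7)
The approach is a classical monotone iteration scheme, adapted to the nonlocal setting. First, split the operator as $Lv(x)=(Mv)(x)-a(x)v(x)$, where
\[
(Mv)(x):=\int_{\R^N\setminus K}J(x-y)v(y)\,\mathrm{d}y,\qquad a(x):=\int_{\R^N\setminus K}J(x-y)\,\mathrm{d}y\le 1,
\]
the bound on $a$ following from the unit mass of $J$. Fix any constant $c>1+\max_{[0,1]}|f'|$, which is finite since $f\in C^1([0,1])$, and define the iteration operator
\[
F(v)(x):=v(x)+\tfrac{1}{c}\bigl(Lv(x)+f(v(x))\bigr).
\]

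The crucial point is that $F$ is monotone on functions taking values in $[0,1]$. For $0\le v\le w\le 1$, the mean value theorem gives
\[
F(w)(x)-F(v)(x)=(w-v)(x)\,\Bigl[1-\tfrac{a(x)}{c}+\tfrac{f'(\xi(x))}{c}\Bigr]+\tfrac{1}{c}(M(w-v))(x),
\]
for some $\xi(x)\in[v(x),w(x)]$. The last term is non-negative because $J\ge 0$ and $w\ge v$, while the bracket is non-negative by the choice of $c$ together with $a(x)\le 1$, so $F(v)\le F(w)$. The sub- and super-solution inequalities translate exactly as $F(\underline{u})\ge\underline{u}$ and $F(\overline{u})\le\overline{u}$. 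Setting $u_0:=\underline{u}$ and $u_{n+1}:=F(u_n)$, a straightforward induction using monotonicity yields
\[
\underline{u}=u_0\le u_1\le u_2\le\dots\le \overline{u}\quad\text{in }\R^N\setminus K.
\]

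Consequently, the pointwise limit $u(x):=\lim_{n\to\infty}u_n(x)\in L^\infty(\R^N\setminus K)$ exists and is sandwiched by $\underline{u}$ and $\overline{u}$. Passing to the limit in the identity $c\,(u_{n+1}-u_n)=Lu_n+f(u_n)$ via dominated convergence (the integrand $J(x-y)u_n(y)$ being dominated by $J(x-\cdot)\in L^1(\R^N)$) together with the continuity of $f$ delivers $Lu+f(u)=0$ in $\R^N\setminus K$, completing the proof.

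The only genuine difficulty is the calibration of the constant $c$: once $c$ is chosen large enough to absorb both the $-a(x)v(x)$ term (controlled by the unit mass of $J$) and the possibly negative values of $f'$, the rest is essentially bookkeeping. The decay behaviour~\eqref{SousSur:iter1} of $\underline{u}$ and $\overline{u}$ at infinity plays no role in constructing $u$ itself and is simply inherited via the sandwich.
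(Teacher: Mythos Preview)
Your proof is correct and in fact more elementary than the paper's. Both arguments are monotone iteration schemes, but they differ in how the iteration is set up.

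The paper works with an \emph{implicit} scheme: it fixes $k>0$ with $k\in\rho(L)$ and defines $u_{j+1}$ as the solution of the linear equation $Lu_{j+1}-k\,u_{j+1}=-k\,u_j-f(u_j)$, starting from $u_0=\overline{u}$ and iterating downward. Because $u_{j+1}$ is only defined implicitly, monotonicity of the sequence is not immediate and must be obtained via a separate weak comparison principle (their Lemma~\ref{LE:comparaison}). That comparison principle, in turn, requires control of the iterates at infinity, which is why the paper carefully builds the auxiliary sequence $v_j:=u_j-\overline{u}$ inside the space $C_0(\R^N\setminus K)$ and invokes hypothesis~\eqref{SousSur:iter1}. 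Your scheme is \emph{explicit}: the map $F(v)=v+\tfrac{1}{c}(Lv+f(v))$ is given pointwise, so monotonicity follows by a one-line computation once $c$ is calibrated. This bypasses the comparison principle entirely, and you are right that the asymptotic condition~\eqref{SousSur:iter1} is then unnecessary for the existence assertion itself; it is only inherited by $u$ through the sandwich $\underline{u}\le u\le\overline{u}$.

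What the paper's route buys is that each iterate is obtained by inverting $L-kI$ on $C_0(\R^N\setminus K)$, so continuity and decay of the $u_j$'s are built in from the start. Your approach trades this structural information for a much shorter argument, which is entirely sufficient here since the lemma only claims $u\in L^\infty$.
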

Although the proof of Lemma~\ref{Lem-iter} relies on rather standard arguments it is not that straightforward. For this reason, we will give a detailed proof (which is postponed to the Appendix at the end of the paper).

Next, we recall a regularity result for nonlocal equations of the form
\begin{equation}\label{eq-pre-reg}
\int_{\O\setminus K}J(x-y)\hspace{0.05em}u(y)\hspace{0.05em}\mathrm{d}y - \mathcal{J}(x)\hspace{0.05em}u(x) +f(u(x))=0 \quad\text{in }\O\setminus K,
\end{equation}
where
\begin{equation}\label{defJ(x)}
\mathcal{J}(x):=\int_{\R^N\setminus K}J(x-y)\hspace{0.05em}\mathrm{d}y.
\end{equation}
Precisely,
\begin{lemma}\label{LEMMA:INT}
Assume that $f\in C^{1}([0,1])$ and that $J$ satisfies \eqref{C01}.  Let $\O\subset \R^N$ be an open set having $C^1$ boundary. Suppose that $K\subset\O$ is a compact set and that
\begin{align}
\max_{[0,1]}f'<\inf_{\O\setminus K}\,\mathcal{J}.   \label{monotonie2}
\end{align}
Let $u\in L^\infty(\O\setminus K,[0,1])$ be a solution to \eqref{eq-pre-reg} a.e. in $\O\setminus K$. Then, $u$ can be redefined up to a negligible set and extended as a uniformly continuous function in $\overline{\O\setminus K}$.
\end{lemma}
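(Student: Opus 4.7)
The plan is to rewrite~\eqref{eq-pre-reg} in the algebraic form
\begin{equation*}
H(x,u(x))=\Phi(x)\quad\text{for a.e. }x\in\O\setminus K,
\end{equation*}
where $H(x,v):=\mathcal{J}(x)v-f(v)$ and $\Phi(x):=\int_{\O\setminus K}J(x-y)\,u(y)\,\mathrm{d}y$. By~\eqref{monotonie2}, the constant $\mu:=\inf_{\O\setminus K}\mathcal{J}-\max_{[0,1]}f'$ is strictly positive, so $\partial_v H(x,v)\ge\mu$ on $(\O\setminus K)\times[0,1]$. Thus $H(x,\cdot)$ is strictly increasing, uniformly in $x$, and the equation determines $u(x)$ as a Lipschitz function of the ``data'' $(\mathcal{J}(x),\Phi(x))$.

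The next step is to check that both $\mathcal{J}$ and $\Phi$ are uniformly continuous on all of $\R^N$. This is immediate: extending $u$ by zero outside $\O\setminus K$, they are convolutions of the $L^1$ kernel $J$ with the bounded functions $u\,\mathds{1}_{\O\setminus K}$ and $\mathds{1}_{\R^N\setminus K}$, so the classical continuity of translations in $L^1$ gives
\begin{equation*}
|\Phi(x+h)-\Phi(x)|+|\mathcal{J}(x+h)-\mathcal{J}(x)|\le (1+\|u\|_\infty)\,\|J(\cdot-h)-J\|_{L^1(\R^N)}\underset{h\to 0}{\longrightarrow}0,
\end{equation*}
uniformly in $x\in\R^N$.

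Combining the two ingredients, for a.e.~pair $x_1,x_2\in\O\setminus K$ I would decompose
\begin{equation*}
H(x_1,u(x_1))-H(x_1,u(x_2))=\bigl[\Phi(x_1)-\Phi(x_2)\bigr]+\bigl[\mathcal{J}(x_2)-\mathcal{J}(x_1)\bigr]\,u(x_2),
\end{equation*}
and use the lower bound on $\partial_v H$ on the left together with $0\le u(x_2)\le 1$ on the right to obtain
\begin{equation*}
\mu\,|u(x_1)-u(x_2)|\le |\Phi(x_1)-\Phi(x_2)|+|\mathcal{J}(x_1)-\mathcal{J}(x_2)|.
\end{equation*}
Since the right-hand side admits a modulus of continuity depending only on $|x_1-x_2|$, this Cauchy-type estimate (valid for a.e.~pair) forces $u$ to coincide almost everywhere with a unique uniformly continuous function on $\O\setminus K$, which then extends by uniform continuity to $\overline{\O\setminus K}$.

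The main, and essentially only, subtlety is the passage from an a.e.~estimate to a pointwise continuous representative. I would handle it by defining $\widetilde{u}(x_0)$ at each $x_0\in\overline{\O\setminus K}$ as the limit $\lim_n u(x_n)$ along any sequence $x_n\to x_0$ of points where~\eqref{eq-pre-reg} holds; the estimate above ensures both that the limit exists and that it is independent of the chosen sequence, while a standard Lebesgue-point argument shows $\widetilde u=u$ almost everywhere. No further analytic input beyond the $L^1$ translation lemma and the quantitative monotonicity~\eqref{monotonie2} is needed.
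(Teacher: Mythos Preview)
Your argument is correct and is precisely the standard one: the paper does not give its own proof but refers to \cite[Lemma~3.2]{BCHV}, where the proof proceeds exactly as you outline --- the convolution terms $\Phi$ and $\mathcal{J}$ are uniformly continuous by $L^1$-continuity of translations, and the quantitative monotonicity $\partial_vH\ge\mu>0$ coming from~\eqref{monotonie2} then transfers this to $u$. Nothing is missing.
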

For a detailed proof, we refer to \cite[Lemma 3.2]{BCHV} (see also \cite{Bates,BR}).
\begin{rem}
Note that $\O$ need not be bounded. In particular, Lemma~\ref{LEMMA:INT} holds when $\O=\R^N$.
\end{rem}
Finally, we recall the following result
\begin{lemma}\label{LEMMA:Behave}
Let $K\subset\R^N$ is a compact set and suppose that $f$ and $J$ satisfy \eqref{C01} and \eqref{C02}. Assume further that $J$ is compactly supported and that $J\in L^2(\R^N)$. Let $u\in C(\R^N\setminus K,[0,1])$ be a solution to
\be\left\{\begin{array}{rcl}
Lu+f(u) & \!\!=\!\! & 0\ \hbox{ in }\,\overline{\R^N\setminus K},\vspace{3pt}\\
\ds{\sup_{\R^N\setminus K}}\,u &  \!\!=\!\! & 1,\end{array}\right.  
\ee
Then, $u(x)\to1$ as $|x|\to \infty$.
\end{lemma}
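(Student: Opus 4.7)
The plan is to combine a translation--compactness argument with the nonlocal maximum principle at an extremum. First, since $f'<1$ on $[0,1]$ by \eqref{C02}, and since $J$ is compactly supported (say in $B_r$) with unit mass, $\mathcal{J}(x)=1$ whenever $\mathrm{dist}(x,K)>r$; in particular condition \eqref{monotonie2} holds in a neighbourhood of infinity, so Lemma \ref{LEMMA:INT} gives uniform continuity of $u$ there. Choose $(x_n)\subset\R^N\setminus K$ with $u(x_n)\to 1$. If $(x_n)$ is bounded along a subsequence, the maximum is attained at some $x_*\in\overline{\R^N\setminus K}$; evaluating the equation at $x_*$ (with $f(1)=0$, $u\le 1$, $J\ge 0$) forces $u\equiv 1$ on the set $\{y\in\R^N\setminus K:J(x_*-y)>0\}$, which by \eqref{C01} contains an annulus, and iterating along $J$-chains in the unbounded component of $\R^N\setminus K$ yields $u\equiv 1$ in a neighbourhood of infinity. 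So I may assume $|x_n|\to\infty$.

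Setting $u_n(x):=u(x+x_n)$, equicontinuity and Arzel\`a--Ascoli provide (up to subsequence) a locally uniform limit $u_\infty\in C(\R^N,[0,1])$ with $u_\infty(0)=1$. Since $K-x_n$ recedes to infinity, dominated convergence passes to the limit in the equation, giving
\[
\int_{\R^N}J(x-y)(u_\infty(y)-u_\infty(x))\,\mathrm{d}y+f(u_\infty(x))=0\quad\text{on }\R^N.
\]
Testing at $x=0$ with $u_\infty\le 1$ and $f(1)=0$ forces $u_\infty\equiv 1$ on the annulus $\{r_1<|y|<r_2\}$ by \eqref{C01} and continuity; iterating at each point of this annulus propagates $u_\infty\equiv 1$ to all of $\R^N$. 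In particular, for every $R>0$ and $\eta>0$, $u\ge 1-\eta$ on $B_R(x_n)$ for $n$ sufficiently large.

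Next, set $m:=\liminf_{|x|\to\infty}u(x)$ and suppose $m<1$ for contradiction. Running the same translation argument along a sequence $(y_k)$ with $|y_k|\to\infty$ and $u(y_k)\to m$ gives a limit $v:\R^N\to[0,1]$ solving the limit equation with $v(0)=m$. Because $v(x)=\lim_k u(x+y_k)\ge\liminf_{|z|\to\infty}u(z)=m$ for every $x\in\R^N$, the infimum of $v$ equals $m$ and is attained at $0$. The nonlocal minimum principle at $0$ then yields $f(m)\le 0$ and, by the same propagation, $v\equiv m$ on $\R^N$; hence $m\in[0,\theta]\cup\{1\}$, so in fact $m\le\theta$. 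Fix $\eta\in(0,1-\theta)$. Since $u$ is continuous and attains values both close to $1$ (on the balls $B_R(x_n)$) and close to $m\le\theta$ (near the $y_k$) with all these points going to infinity, the intermediate value theorem along continuous paths in the unbounded component of $\R^N\setminus K$ (which contains all points of large enough norm, as $K$ is compact) yields $(z_k)$ with $|z_k|\to\infty$ and $u(z_k)=1-\eta$. Translating at $(z_k)$, one extracts a limit $w$ of the same equation with $w(0)=1-\eta$ and $w\ge m$ everywhere. Combining the fact that the constant $1-\eta$ is a strict stationary sub-solution of the limit equation (because $f(1-\eta)>0$) with the presence of nearby ``$u\ge 1-\eta/2$'' tubes from the first step, a sliding--comparison argument (applied via the monotone scheme of Lemma \ref{Lem-iter}) upgrades the inequality to $w\ge 1-\eta$ on $\R^N$. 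The nonlocal minimum principle at $0$ then forces $f(1-\eta)\le 0$, contradicting $1-\eta>\theta$.

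The main obstacle is the last step: bridging the ``ball-wise'' convergence $u\to 1$ near the maximising sequence to genuine convergence at infinity. Since $L$ has neither a strong regularising effect nor a strong maximum principle, this propagation cannot be obtained by any classical elliptic argument; it must instead be engineered via a stationary sub-solution built from the bistable structure ($f>0$ on $(\theta,1)$) and the monotone scheme of Lemma \ref{Lem-iter}, together with a nonlocal sliding comparison. This is the technical heart of the lemma.
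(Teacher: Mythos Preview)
The paper does not prove this lemma; it refers to \cite[Lemma 7.2]{BCHV}. So there is no in-paper proof to compare with, and your proposal must be judged on its own.

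Your steps up to and including the analysis of $v\equiv m$ are sound and standard: uniform continuity of $u$ far from $K$ via Lemma~\ref{LEMMA:INT}, translation along a maximising sequence, Arzel\`a--Ascoli, passage to the limit equation on $\R^N$, and the nonlocal strong maximum principle forcing $u_\infty\equiv 1$ and $v\equiv m$ with $f(m)=0$. Likewise, producing intermediate-value points $z_k$ with $u(z_k)=1-\eta$ and $|z_k|\to\infty$, and extracting an entire limit $w$ with $w(0)=1-\eta$, is fine.

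The gap---which you yourself flag---is the final step. Your claim that a ``sliding--comparison argument (applied via the monotone scheme of Lemma~\ref{Lem-iter}) upgrades the inequality to $w\ge 1-\eta$ on $\R^N$'' is not an argument. Lemma~\ref{Lem-iter} constructs a solution between an ordered pair of sub/super-solutions; it is not a comparison principle, and it gives no information about the given solution $w$. Nothing links the balls $B_R(x_n)$ (where $u$ is close to $1$) to the points $z_k$: the two sequences may live in completely different regions of space, so the ``nearby $u\ge 1-\eta/2$ tubes'' are not nearby at all. There is no reason $w$ should attain its minimum at $0$; in fact the very choice of $z_k$ by the intermediate value theorem suggests $w$ takes values on both sides of $1-\eta$ near $0$. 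Consequently neither the inequality $w\ge 1-\eta$ nor the conclusion $f(1-\eta)\le 0$ is justified.

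What is actually needed to close this is an honest propagation device, not a reference to Lemma~\ref{Lem-iter}. The standard route (and the one used in \cite{BCHV}) is to build, for each small $\delta>0$, a \emph{compactly supported} continuous sub-solution $\psi_R$ of $L\psi+f(\psi)\ge 0$ on $\R^N$ with $\psi_R\equiv 0$ outside $B_{R+r}$ and $\max\psi_R\ge 1-\delta$; one then places $\psi_R(\cdot-x_n)$ below $u$ (possible because $u\to 1$ uniformly on $B_{R+r}(x_n)$), and slides it continuously to any centre $a$ with $|a|$ large, using a genuine nonlocal comparison principle for the \emph{pair} $(u,\psi_R(\cdot-a))$ at the first touching point. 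This yields $u(a)\ge 1-\delta$ for all large $|a|$, hence $m\ge 1-\delta$ for every $\delta$, i.e.\ $m=1$. Your outline is missing precisely this construction and the comparison lemma that powers the slide.
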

The proof may be found in \cite[Lemma 7.2]{BCHV}.

\begin{rem} \label{rem-geod}
The above results still hold when $J(x-y)$ is replaced by $\widetilde{J}(d_{\mathrm{g}}(x,y))$. For the validity of Lemma~\ref{Lem-iter} in this case, we refer to Remark~\ref{geod-montone-iterative} in the Appendix.
On the other hand, a careful inspection of the proof of \cite[Lemma 3.2]{BCHV} shows that the condition (2.6)
with $\mathcal{J}$ replaced by
\begin{align}
\widetilde{\mathcal{J}}(x):=\int_{\R^N\setminus K}\widetilde{J}(d_{\mathrm{g}}(x,y))\mathrm{d}y, \label{defJtilde(x)}
\end{align}
still implies the continuity of solutions to
$$ \int_{\Omega\setminus K}\widetilde{J}(d_\mathrm{g}(x,y))\hspace{0.05em}u(y)\hspace{0.05em}\mathrm{d}y-\widetilde{\mathcal{J}}(x)\hspace{0.05em}u(x)+f(u(x))=0, $$
in $\overline{\Omega\setminus K}$. Similarly, Lemma~\ref{LEMMA:Behave} holds as well with $L_\mathrm{g}$ (as given by \eqref{Op:Disp})
instead of $L$ since its proof requires only estimates on convex regions on which it
trivially holds that $d_\mathrm{g}(x,y)=|x-y|$.
\end{rem}

\section{Construction of a family of obstacles}\label{section-data}

This section is devoted to the construction of an appropriate family of obstacles $(K_\eps)_{0<\eps<1}$.
Our construction will depend on the interplay with the datum $(J,f)$.
As mentioned in the introduction, we will assume that $J$ satisfies \eqref{C01} and \eqref{J-assum} and that $f$ satisfies \eqref{C02}.
However, before constructing $(K_{\eps})_{0<\eps<1}$, we need to define some important quantities depending on $f$ and $J$.
We will call $C_0>0$ and $M_2(J)>0$ the constants respectively defined by
\begin{numcases}{~}
C_0:= \displaystyle\max_{s\in[0,1]} f(s), & \label{C_0}\\
M_2(J):=\displaystyle\int_{\R^N}J(z)|z|^2\mathrm{d}z. & \label{M2}
\end{numcases}
\noindent Note that the assumptions \eqref{C01} and \eqref{C02} guarantee that these two numbers are well-defined. Furthermore, we introduce two quantities, $C_{N,J}$  and $R_0^* $, respectively defined by
\begin{numcases}{~}
C_{N,J}:=\frac{\pi^2\hspace{0.05em}M_2(J)}{32\hspace{0.05em}N}, & \vspace{3pt} \label{CNJ}\\
R^*_0(J,f):=\sqrt{\frac{\theta C_{N,J}}{5C_0}}. & \label{R0}
\end{numcases}

\begin{figure}
\centering
\includegraphics[scale=0.35]{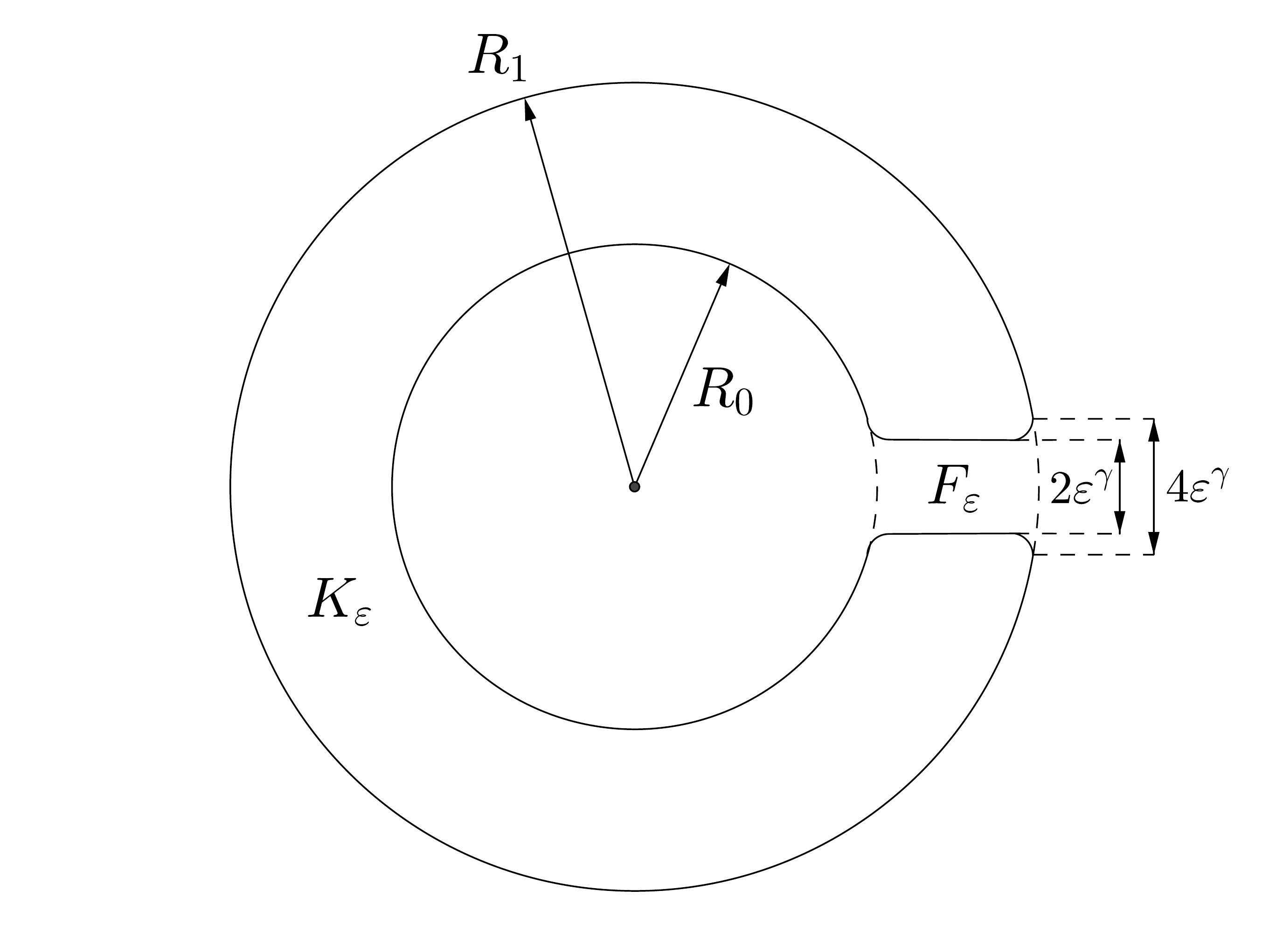}
\caption{Illustration of $K_\eps$ in dimension $2$.}
\label{figKeps}
\end{figure}

Let us now start the construction of the obstacle. Fix some $R_1>2$ and let $0<R_0<R_0^*(J,f)$ (where $R_0^*(J,f)$ is as in \eqref{R0}). Let $0<\eps<1$ be a small parameter and set $\gamma:=\frac{N}{N-1}$.
We call $\mathcal{A}$ the annulus $\mathcal{A}:=\mathcal{A}(R_0,R_1)$ and we consider a smooth compact simply connected set $K_\eps\subset\overline{\mathcal{A}}$ satisfying the following properties:
\begin{enumerate}
\item[(i)] $\overline{\mathcal{A}}\cap\big\{x\in\R^N\,;\ x_1\leq0\big\}\subset K_\eps$, \vspace{3pt}
\item[(ii)] $\overline{\mathcal{A}}\cap\big\{x\in\R^N\,;\ x_1>0,|x'|>2\eps^\gamma\big\}\subset K_\eps$, \vspace{3pt}
\item[(iii)] $K_\eps\subset\left(\overline{\mathcal{A}}\cap\big\{x\in\R^N\,;\ x_1\leq0\big\}\right)\cup\left(\overline{\mathcal{A}}\cap\big\{x\in\R^N\,;\ x_1>0,|x'|\geq\eps^\gamma\big\}\right)$, \vspace{3pt}
\item[(iv)] $\mathcal{A}(R_0+\eps^\gamma/4,R_1-\eps^\gamma/4)\cap\big\{x\in\R^N;x_1>0,|x'|\geq\eps^\gamma\big\}\subset K_\eps$.
\end{enumerate}
where $x=(x_1,x')$ and $x'=(x_2,\ldots,x_N)$ (see Figure~\ref{figKeps} for a visual illustration).
Furthermore, we define the following open set:
$$ F_\eps:=\mathcal{A}\setminus K_\eps. $$
We will refer to $(K_\eps)_{0<\eps<1}$ as the \emph{family of obstacles associated to the pair} $(J,f)$.

Let us also list in this section a preparatory lemma.
\begin{prop}\label{LE:CONTINUE}
Let $N \ge 2$. Suppose that $f$ and $J$ are such that \eqref{C02} and \eqref{J-assum} hold true. Assume further that $(K_\eps)_{0<\eps<1}$ is the family of obstacles associated to the pair $(J,f)$. Let
\begin{align}
f_\eps(s):=\eps^2f(s)\ \text{ and }\ J_\eps(z):=\frac{1}{\eps^N}\hspace{0.05em}J\left(\frac{z}{\eps}\right). \label{Donnees:fJ}
\end{align}
Then, there exists some $\eps_0>0$ depending only on $N$, $R_0$, $J$ and $f'$, such that
\begin{align}
\max_{[0,1]}f_\eps'<\inf_{x\in\R^N\setminus K_\eps}\int_{\R^N\setminus K_\eps}J_\eps(x-y)\hspace{0.05em}\mathrm{d}x, \hbox{ for all }\eps\in(0,\eps_0). \label{CONTINUE}
\end{align}
\end{prop}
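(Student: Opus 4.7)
The plan is to establish the uniform lower bound
$$
\inf_{x \in \R^N \setminus K_\eps} \int_{\R^N \setminus K_\eps} J_\eps(x-y)\,\mathrm{d}y \geq c_0\hspace{0.05em}\eps
$$
for some constant $c_0 = c_0(N, R_0, J) > 0$ and all sufficiently small $\eps$. Since $\max_{[0,1]} f_\eps' = \eps^2 \max_{[0,1]} f'$, the conclusion \eqref{CONTINUE} then follows from any choice of $\eps_0 \leq c_0/\max(1,\max_{[0,1]} f')$. Changing variables via $w = (y-x)/\eps$ and using $\mathrm{supp}(J) \subset B_r$, the integral becomes
$$
I(x) := \int_{B_r} J(w)\,\mathds{1}_{\R^N \setminus K_\eps}(x - \eps w)\,\mathrm{d}w,
$$
so the task reduces to producing, for each $x \in \R^N \setminus K_\eps$, a set $\Omega_x \subset B_r$ with $\int_{\Omega_x} J\,\mathrm{d}w \geq c_0\eps$ and such that $x - \eps w \in K_\eps^c$ for every $w \in \Omega_x$.

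The geometry of $K_\eps$ naturally splits the analysis into two regimes. If $x$ lies outside the annulus, i.e., $x \in B_{R_0}$ or $x \in \R^N \setminus \overline{B_{R_1}}$, then $x$ sits in a convex open subset of $K_\eps^c$; a standard quantitative convexity argument shows that for $\eps$ so small that $r\eps \ll R_0$, a positive fraction (independent of $\eps$) of the mass of $J$ on $B_r$ is brought into $K_\eps^c$ by the translation, so $I(x) \geq c_1$ for some $\eps$-independent $c_1 > 0$. If instead $x \in \overline{\mathcal{A}} \setminus K_\eps$, then conditions (i)--(ii) force $x_1 > 0$, $|x'| \leq 2\eps^\gamma$ and $|x| \geq R_0$. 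Fixing $a \in (r_1, r)$ once and for all, I would set
$$
\Omega_x := \big\{w = (w_1, w') \in \R \times \R^{N-1} : |w_1| < a,\ |w' - x'/\eps| < \eps^{\gamma - 1}\big\}.
$$
Since $\gamma > 1$, one has $|x'/\eps| \leq 2\eps^{\gamma - 1} \to 0$, so for $\eps$ small, $\Omega_x \subset B_{\rho_0}$ for some $\rho_0 \in (r_1, r)$ independent of $\eps$; hence the radial non-increasingness and a.e.\ positivity of $J$ on $(r_1,r)$ give $J \geq \tilde J(\rho_0) > 0$ throughout $\Omega_x$. Moreover, for $w \in \Omega_x$ the point $y := x - \eps w$ satisfies $|y'| = |x' - \eps w'| < \eps^\gamma$ by construction, and $y_1 = x_1 - \eps w_1 \geq R_0/2 - \eps a > 0$ for $\eps$ small (using $x_1 \geq \sqrt{R_0^2 - 4\eps^{2\gamma}} \geq R_0/2$); hence by condition (iii), $y \in \{z_1 > 0,\, |z'| < \eps^\gamma\} \subset K_\eps^c$.

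The critical computation is then the Lebesgue measure $|\Omega_x| = 2a\hspace{0.05em}\omega_{N-1}\hspace{0.05em}(\eps^{\gamma - 1})^{N - 1} = 2a\hspace{0.05em}\omega_{N-1}\hspace{0.05em}\eps$, where the identity $(\gamma - 1)(N - 1) = 1$ --- that is, the precise choice $\gamma = N/(N-1)$ hardwired in the construction of $K_\eps$ --- is exactly what produces the surviving factor of $\eps$. This yields $I(x) \geq 2a\hspace{0.05em}\omega_{N-1}\hspace{0.05em}\tilde J(\rho_0)\hspace{0.05em}\eps$ in the channel regime, and combined with the $\eps$-independent bound of the first regime gives the required uniform lower bound. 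The main difficulty --- and the point where the whole argument could plausibly break --- lies precisely here: the $(N-1)$-dimensional cross-section of the channel has measure $\sim \eps^{\gamma(N-1)} = \eps^N$, which exactly cancels the $\eps^{-N}$ concentration of $J_\eps$ up to a single surviving factor of $\eps$, so $I$ can decay linearly in $\eps$. The Proposition survives only because the nonlinear threshold $\eps^2\max f'$ decays strictly faster.
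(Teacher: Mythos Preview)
Your proof is correct and follows the same two-regime strategy as the paper, but the execution in each regime differs in interesting ways.

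In the channel regime (your Case~2, the paper's Step~1), the paper works with the slab $\widetilde{F}_\eps:=\{R_0<z_1<R_1,\ |z'|<\eps^\gamma\}$ and, rather than bounding $J$ pointwise from below, invokes Chebyshev's integral inequality to compare $\int_{\widetilde{F}_\eps}J_\eps(x-\cdot)$ against the purely geometric ratio $|B_{\eps/2}(x)\cap\widetilde{F}_\eps|/|B_{\eps/2}|$, and then estimates this volume ratio directly. Your argument is more elementary: you exhibit an explicit cylinder $\Omega_x$ on which $J$ is pointwise bounded below by $\tilde J(\rho_0)>0$ (using radial monotonicity and $\mathrm{supp}(J)=B_r$), and on which $x-\eps w$ lands in the set $\{z_1>0,\ |z'|<\eps^\gamma\}\subset K_\eps^c$ guaranteed by property~(iii). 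Both routes isolate the same mechanism --- the identity $(\gamma-1)(N-1)=1$ --- and arrive at the same linear-in-$\eps$ lower bound; yours avoids the Chebyshev trick at the price of using the pointwise positivity of $J$ on a fixed sub-ball.

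In the ``away from the channel'' regime, the paper's decomposition is slightly different (it treats the set $\R^N\setminus(K_\eps\cup\widetilde{F}_\eps)$, which also contains the thin boundary shells of the annulus) and produces an explicit ball $B_{\eps/8}(z)\subset B_{\eps/2}(x)\setminus\overline{\mathcal{A}}$ on which $\int J_\eps$ equals a fixed constant $M_J>0$. Your ``standard quantitative convexity argument'' is correct but a touch vague: note that $\R^N\setminus\overline{B_{R_1}}$ is \emph{not} itself convex, so you should name the convex subset explicitly (the tangent half-space $\{y:\,y\cdot x/|x|>R_1\}$ works), after which the half-ball of $B_r$ pointing inward carries $J$-mass at least $1/2$ by radial symmetry. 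With that clarification your Case~1 is complete.
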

Proposition \ref{LE:CONTINUE} will play an important role in the sequel. Inter alia, it guarantees that the solutions of some nonlocal equations defined in the sequel are \emph{continuous}.

\begin{proof}
By assumption \eqref{J-assum}, up to rescale $J$, we may assume without loss of generality that
$$ \mathrm{supp}(J)=B_{1/2}. $$
Let $0<\eps<\eps_1:=\min\{1,R_0/2\}$ and $x\in \R^N\setminus K_\eps$. Define
$$ \widetilde{F}_\eps:=\{z\in \R^N;R_0<z_1<R_1,\,|z'|<\eps^\gamma\} \hbox{ and } \Lambda_\eps(x):=B_{\eps/2}\cap(\widetilde{F}_\eps-x). $$
We will estimate from below the integral in the right-hand side of \eqref{CONTINUE}. For it, we will treat separately the case where $x\in\widetilde{F}_\eps$ and the case where $x\in\R^N\setminus(K_\eps\cup\widetilde{F}_\eps)$.
\vskip 0.3cm

\noindent\emph{Step 1: Lower bound in $\widetilde{F}_\eps$}
\vskip 0.3cm
Let $x\in\widetilde{F}_\eps$. Since $J_\eps$ is radially non-increasing, non-negative and supported in $B_{\eps/2}$, there is some $\widetilde{J}_\eps:\R_+\to\R_+$ such that $J_\eps(z)=\widetilde{J}_\eps(|z|)$ and $\mathrm{supp}(\widetilde{J}_\eps)=[0,\eps/2)$. Thus, passing to polar coordinates, the mass carried by $J_\eps(x-\cdot)$ in $\widetilde{F}_\eps$ can be rewritten as
\begin{align*}
\int_{\widetilde{F}_\eps}J_\eps(x-y)\hspace{0.05em}\mathrm{d}y&=\int_{\Lambda_\eps(x)}J_\eps(y)\hspace{0.05em}\mathrm{d}y=\int_{\S^{N-1}}\left(\int_0^{\eps/2} \mathds{1}_{\Lambda_\eps(x)}(\sigma t)\hspace{0.05em}\widetilde{J}_\eps(t)\hspace{0.05em}t^{N-1}\hspace{0.05em}\mathrm{d}t\right)\mathrm{d}\mathscr{H}^{N-1}(\sigma).
\end{align*}
Notice that $\Lambda_\eps(x)$ is a convex set and that $0\in\Lambda_{\eps}(x)$. In particular, both $t\mapsto\mathds{1}_{\Lambda_\eps(x)}(\sigma t)$ and $t\mapsto\widetilde{J}_\eps(t)$ are non-increasing functions. Hence, using Chebyshev's integral inequality (see e.g. \cite[Theorem 2.5.10, p.40]{Mitrinovic}), we have
\begin{align*}
\int_{\widetilde{F}_\eps}J_\eps(x-y)\hspace{0.05em}\mathrm{d}y&\geq \frac{N}{(\eps/2)^N}\!\int_{\S^{N-1}}\!\left(\int_0^{\eps/2} \mathds{1}_{ \Lambda_\eps(x)}(\sigma t)\hspace{0.05em}t^{N-1}\mathrm{d}t\int_0^{\eps/2}\widetilde{J}_\eps(t)\hspace{0.05em}t^{N-1}\mathrm{d}t\right)\mathrm{d}\mathscr{H}^{N-1}(\sigma),
\end{align*}
Since $J_\eps$ has unit mass and $\mathrm{supp}(J_\eps)=B_{\eps/2}$, one has
\begin{align*}
\int_0^{\eps/2}\widetilde{J}_\eps(t)\hspace{0.05em}t^{N-1}\mathrm{d}t=\sigma_N^{-1}=\big(N|B_1|\big)^{-1},
\end{align*}
where $\sigma_N=\mathscr{H}^{N-1}(\S^{N-1})$. Ergo,
\begin{align}
\int_{\widetilde{F}_\eps}J_\eps(x-y)\hspace{0.05em}\mathrm{d}y&\geq  \frac{1}{|B_{\eps/2}|}\int_{\S^{N-1}}\left(\int_0^{\eps/2} \mathds{1}_{\Lambda_\eps(x)}(\sigma t)\hspace{0.05em}t^{N-1}\hspace{0.05em}\mathrm{d}t\right)\mathrm{d}\mathscr{H}^{N-1}(\sigma) \nonumber\\
&=\frac{1}{|B_{\eps/2}|}\int_{B_{\eps/2}}\mathds{1}_{\Lambda_\eps(x)}(y)\hspace{0.05em}\mathrm{d}y. \nonumber
\end{align}
Since $\widetilde{F}_\eps\subset\R^N\setminus K_\eps$ and $\Lambda_\eps(x)=B_{\eps/2}\cap(\widetilde{F}_\eps-x)$, we get
\begin{align}
\int_{\R^N\setminus K_\eps}J_\eps(x-y)\hspace{0.05em}\mathrm{d}y&\geq \frac{|B_{\eps/2}(x)\cap \widetilde{F}_\eps|}{|B_{\eps/2}|},\, \hbox{ for any }x\in\widetilde{F}_\eps. \label{borne:feps}
\end{align}
Let us now estimate the quantity $|B_{\eps/2}(x)\cap\widetilde{F}_\eps|$. Observe that for $\eps$ small enough, say when $0<\eps< \eps_2:={4}^{-(N-1)}$, one has $\eps/2>2\eps^\gamma$. In particular, this implies that $B_{\eps/2}(x)\cap\widetilde{F}_\eps$ always contains an hyper-rectangle of the form $\mathscr{T}\big((0,\eps/4)\times(0,2\eps^\gamma)\times\cdots\times(0,2\eps^\gamma)\big)$ for some translation $\mathscr{T}$ of $\R^N$, so that
$$|B_{\eps/2}(x)\cap\widetilde{F}_\eps|\geq(\eps/4)\times(2\eps^\gamma)^{N-1}=2^{N-3}\eps^{N+1}.$$
Therefore, recalling \eqref{borne:feps}, we obtain that, for all $0<\eps<\eps_2$ and all $x\in \widetilde{F}_\eps$, it holds
\begin{align}
\int_{\R^N\setminus K_\eps}J_\eps(x-y)\hspace{0.05em}\mathrm{d}y\geq C_1\eps, \label{Pa03}
\end{align}
for some $C_1>0$ depending on $N$ only.
\vskip 0.3cm

\noindent\emph{Step 2: Lower bound in $\R^N\setminus(K_\eps\cup\widetilde{F}_\eps)$}
\vskip 0.3cm
Let us now consider the case where $x\in\R^N\setminus(K_\eps\cup\widetilde{F}_\eps)$. For it, we first note that, since $0<\eps^\gamma<\eps<R_0/2$ (remember $0<\eps< \eps_1$), the point $x_0:=(R_0,\eps^\gamma,0,\cdots,0)\in\partial\widetilde{F}_\eps$ satisfies
$$ |x_0|^2=R_0^2+\eps^{2\gamma}<R_0^2+R_0\eps^\gamma/2<(R_0+\eps^\gamma/4)^2, $$
which implies that $\widetilde{F}_\eps\cap B_{R_0+\eps^\gamma/4}\neq\varnothing$. On the other hand, it is clear from the definition of $\widetilde{F}_\eps$ that $\widetilde{F}_\eps\setminus B_{R_1-\eps^\gamma/4}\neq\varnothing$. A consequence of this is that
$$ \widetilde{F}_\eps\cap\mathcal{A}(R_0+\eps^\gamma/4,R_1-\eps^\gamma/4)=\mathcal{A}(R_0+\eps^\gamma/4,R_1-\eps^\gamma/4)\cap\big\{z\in\R^N;z_1>0,|z'|<\eps^\gamma\big\}. $$
Whence, recalling properties (i) and (iv) in the definition of $K_\eps$, we deduce that
$$\mathcal{A}(R_0+\eps/4,R_1-\eps/4)\subset \mathcal{A}(R_0+\eps^\gamma/4,R_1-\eps^\gamma/4)\subset K_\eps\cup\widetilde{F}_\eps,$$
where, in the left-hand side, we have used the fact that $\eps^\gamma<\eps$.
In turn, this implies that
$$x\in\R^N\setminus\mathcal{A}(R_0+\eps/4,R_1-\eps/4). $$
In particular, since $0<\eps<R_0/2<R_0$, we may find a point $z\in \R^N$ such that
\begin{align}
|x-z|=\frac{3\eps}{8} \hbox{ and } B_{\eps/8}(z)\subset B_{\eps/2}(x)\setminus\overline{\mathcal{A}}\subset\R^N\setminus K_\eps. \label{DE:z}
\end{align}
Indeed, when $x\in\R^N\setminus B_{R_1-\eps/4}$, this follows from the convexity of $B_{R_1}$; and, when $x\in B_{R_0+\eps/4}$, the constraint $0<\eps<R_0/2$ allows one to choose $z$ on the diagonal of $B_{R_0+\eps/4}$ containing $x$. 
On account of this, we may write
\begin{align*}
\int_{\R^N\setminus K_\eps}J_\eps(x-y)\hspace{0.05em}\mathrm{d}y\geq\int_{B_{\eps/8}(z)}J_\eps(x-y)\hspace{0.05em}\mathrm{d}y=\int_{B_{\eps/8}(z-x)}J_\eps(y)\hspace{0.05em}\mathrm{d}y=\int_{B_{1/8}\left(\frac{z-x}{\eps}\right)}J(y)\hspace{0.05em}\mathrm{d}y.
\end{align*}
Now, by \eqref{DE:z}, we have $(z-x)/\eps\in\partial B_{3/8}$. Thus,
\begin{align*}
\int_{\R^N\setminus K_\eps}J_\eps(x-y)\hspace{0.05em}\mathrm{d}y\geq \int_{B_{1/8}(e_x)}J(y)\hspace{0.05em}\mathrm{d}y=:M_{J}(e_x)\, \hbox{ for some }e_x\in \partial B_{3/8}.
\end{align*}
Notice that $B_{1/8}(e_x)\subset B_{1/2}=\mathrm{supp}(J)$ (because $e_x\in\partial B_{3/8}$) which implies $M_{J}(e_x)>0$. Moreover, since $J$ is radially symmetric, the quantity $M_{J}(e_x)$ does not depend on the choice of $e_x\in\partial B_{3/8}$, namely
$$ M_{J}(e_x)=M_{J}(e)\equiv M_{J}>0,\, \hbox{ for every } e\in\partial B_{3/8}, $$
and some constant $M_{J}$ depending on $J$ only.

Therefore, for any $0<\eps<\eps_1$ and $x\in \R^N\setminus(K_\eps\cup\widetilde{F}_\eps)$, it holds
\begin{align}
\int_{\R^N\setminus K_\eps}J_\eps(x-y)\hspace{0.05em}\mathrm{d}y\geq M_{J}>0. \label{Pa}
\end{align}

\noindent\emph{Step 3: Conclusion}
\vskip 0.3cm
Since $\R^N\setminus K_\eps=\widetilde{F}_\eps\cup (\R^N\setminus (\widetilde{F}_\eps\cup K_\eps))$, by \eqref{Pa03} and \eqref{Pa}, we obtain
$$ \inf_{x\in\R^N\setminus K_\eps}\int_{\R^N\setminus K_\eps}J_\eps(x-y)\hspace{0.05em}\mathrm{d}y\geq \min\left\{M_{J},C_1\right\}\eps, $$
for any $0<\eps < \eps_3:=\min\{\eps_1,\eps_2\}$. Whence, letting
$$ \eps_0:=\min\left\{\eps_3,\frac{\min\left\{M_{J},C_1\right\}}{\max_{[0,1]}f'}\right\}, $$
and recalling that $f_\eps(s)=\eps^2f(s)$, we obtain
$$ \max_{[0,1]}f_\eps'<\inf_{x\in\R^N\setminus K_\eps}\int_{\R^N\setminus K_\eps}J_\eps(x-y)\hspace{0.05em}\mathrm{d}x \, \hbox{ for any }\eps\in(0,\eps_0), $$
which is the desired inequality.
\end{proof}

\begin{rem}\label{RK:geo2}
Since $J_\eps$ is radially non-increasing and satisfies \eqref{C01}, there is some non-increasing $\widetilde{J}_\eps\in L_{\mathrm{loc}}^1(0,\infty)$ satisfying $J_\eps(z)=\widetilde{J}_\eps(|z|)$. In particular, since $d_{\mathrm{g}}(x,y)\geq|x-y|$, it holds that
$$ \sup_{x\in\R^N\setminus K_\eps}\int_{\R^N\setminus K_\eps}\widetilde{J}_\eps(d_{\mathrm{g}}(x,y))\hspace{0.05em}\mathrm{d}y\leq \sup_{x\in\R^N\setminus K_\eps}\int_{\R^N\setminus K_\eps}J_\eps(x-y)\,\mathrm{d}y=1, $$
thus implying that $\widetilde{J}_\eps$ satisfies \eqref{J:geo}. Moreover, Proposition \ref{LE:CONTINUE} still holds when $J_\eps(x-y)$ is replaced by $\widetilde{J}_\eps(d_{\mathrm{g}}(x,y))$, i.e. we still have
\begin{align}
\max_{[0,1]}f_\eps'<\inf_{x\in\R^N\setminus K_\eps}\int_{\R^N\setminus K_\eps}\widetilde{J}_\eps(d_{\mathrm{g}}(x,y))\hspace{0.05em}\mathrm{d}y. \label{LE:geo:cont}
\end{align}
Indeed, this is because our proof reduces to estimate the mass carried by $J_\eps(x-\cdot)$ on \emph{convex} sub-domains of $\R^N\setminus K_\eps$ and, in this case, the geodesic distance coincides with the Euclidian distance, namely it holds that $J_\eps(x-y)=\widetilde{J}_\eps(d_{\mathrm{g}}(x,y))$.
\end{rem}

\section{Construction of a global super-solution}\label{section-supersol}

In this section we construct a global super-solution to \eqref{eq-eps}.
Precisely, given a pair $(J,f)$ satisfying \eqref{C02} and \eqref{J-assum}
and given the family of obstacles $(K_\eps)_{0<\eps<1}$ associated to $(J,f)$
(as defined in Section \ref{section-data}), we construct a global super-solution $\bar{u}_\eps$ to
\begin{align}
\int_{\R^N\setminus K_\eps}J_\eps(x-y)(\bar u_\eps(y)-\bar u_\eps(x))\mathrm{d}y +f_\eps(\bar u_\eps(x)) \le 0 \quad\text{for }x \in \R^N\setminus K_\eps, \label{bch-eq-f}
\end{align}
that further satisfies
\begin{align}
\bar u_\eps \equiv 1 \quad\text{for }x \in \R^N\setminus B_R, \label{bch-eq-f-bc}
\end{align}
for some large $R>0$, where $f_\eps$ and $J_\eps$ are as in \eqref{Donnees:fJ}.
More precisely, we prove the following
\begin{lemma}\label{bch-lem-supersol}
Let $N\ge 2$ and let $(J,f)$ be a pair satisfying \eqref{C02} and \eqref{J-assum}. Let $(K_\eps)_{0<\eps<1}$ be the family of obstacles associated to the pair $(J,f)$ (as defined in Section~\ref{section-data}). Let $f_\eps$ and $J_\eps$ be as in \eqref{Donnees:fJ}. Then, there exists $R^*>0$ and $\eps^*>0$ such that, for all $0<\eps <\eps^*$ and all $R\geq R^*$, there is a continuous positive nonconstant function $\bar u_\eps$ satisfying \eqref{bch-eq-f} and \eqref{bch-eq-f-bc}.
\end{lemma}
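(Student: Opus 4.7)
The plan is to construct $\bar u_\eps$ explicitly as a radial function equal to a small constant $c\in(0,\theta)$ throughout the inner region $(\R^N\setminus K_\eps)\cap B_{R_1}$ (namely the hole $B_{R_0}$ together with the narrow channel $F_\eps$), transitioning via a smooth radial profile $\phi(|x|)$ on the annulus $\{R_1\le|x|\le R\}$, and extended by $1$ outside $B_R$:
\[
\bar u_\eps(x)=\begin{cases}c&\text{if }x\in(\R^N\setminus K_\eps)\cap\overline{B_{R_1}},\\ \phi(|x|)&\text{if }R_1\le|x|\le R,\\ 1&\text{if }|x|\ge R.\end{cases}
\]
The constant $c$ will be chosen small enough that $f(c)<0$ has definite magnitude, while $\phi:[R_1,R]\to[c,1]$ is a non-decreasing continuous profile with $\phi(R_1)=c$, $\phi'(R_1)=0$, $\phi(R)=1$, and satisfying the local super-solution inequality $\tfrac{M_2(J)}{2N}\bigl(\phi''(r)+\tfrac{N-1}{r}\phi'(r)\bigr)+f(\phi(r))\le 0$ on $(R_1,R)$. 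Its existence is established by a variational argument on an auxiliary local energy of the type $\int_{R_1}^R r^{N-1}\bigl[\tfrac{C}{2}(\phi')^2+F(\phi)\bigr]dr$ with $F'=-f$; the role of the constants $C_{N,J}$ and $R_0^*(J,f)$ from \eqref{CNJ}--\eqref{R0} and of the conditions $R_0<R_0^*$, $R\ge R^*$ is precisely to ensure the quantitative compatibility of all the forthcoming estimates.

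The nonlocal super-solution inequality $L\bar u_\eps(x)+f_\eps(\bar u_\eps(x))\le 0$ is then verified region-by-region. For $x\in B_{R_0}$, or for $x\in F_\eps$ at distance at least $\eps$ from $\partial B_{R_1}$, the support of $J_\eps(x-\cdot)$ lies in $B_{R_1}\setminus K_\eps$ where $\bar u_\eps\equiv c$, so $L\bar u_\eps(x)=0$ and the inequality reduces to $\eps^2 f(c)\le 0$. In the outer bulk $\{R_1+\eps<|x|<R-\eps\}$, where $\phi$ is smooth, the Bourgain--Brezis--Mironescu expansion gives
\[
L\bar u_\eps(x)=\eps^2\,\frac{M_2(J)}{2N}\Delta\phi(|x|)+o(\eps^2),
\]
so that the local super-solution property of $\phi$, combined with uniform bounds on $\|D^2\phi\|_\infty$ and $\|D^3\phi\|_\infty$, yields the nonlocal inequality for $\eps$ small enough, uniformly on the transition region. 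The remaining case $|x|\ge R$ is trivial since $\bar u_\eps\equiv 1$, $f(1)=0$ and $L\bar u_\eps\le 0$; continuity of $\bar u_\eps$ across $|x|=R_1$ and $|x|=R$ is built in by construction.

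The main obstacle is the verification near the outer mouth of the channel, for $x\in F_\eps$ with $|x|$ within $\eps$ of $R_1$. There the tube portion of $B_\eps(x)\cap(\R^N\setminus K_\eps)$ contributes zero to $L\bar u_\eps(x)$, but the exterior portion contributes
\[
\int_{B_\eps(x)\cap\{|y|>R_1\}}J_\eps(x-y)\bigl(\phi(|y|)-c\bigr)dy.
\]
A Taylor expansion of $\phi(|y|)-c$ combined with the vanishing $\phi'(R_1)=0$ and the isotropy of $J$ evaluates this integral to $\tfrac{\phi''(R_1)M_2(J)}{4N}\eps^2+O(\eps^3)$, which must be dominated by the $\eps^2|f(c)|$ coming from $f_\eps(c)$. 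This forces the constraint $\phi''(R_1)\le 4N|f(c)|/M_2(J)$, which is weaker than---and hence compatible with---the local super-solution bound $\phi''(R_1)\le 2N|f(c)|/M_2(J)$, leaving a margin of order $\tfrac12|f(c)|\eps^2$ to absorb the cubic errors. Without the vanishing of $\phi'(R_1)$, the leading exterior contribution would be of order $\phi'(R_1)\cdot\eps$ and would overwhelm $\eps^2|f(c)|$ for small $\eps$; this is the heart of the difficulty and is precisely what dictates the careful choice of the channel cross-section $\eps^{N/(N-1)}$ in the family $(K_\eps)_{0<\eps<1}$ constructed in Section~\ref{section-data}. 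A symmetric analysis handles points just outside $B_{R_1}$ in the channel's exterior.
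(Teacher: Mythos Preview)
Your proposed radial profile $\phi$ does not exist. Write $G(s)=\int_0^s f$ and $H(r)=\tfrac{C}{2}(\phi'(r))^2+G(\phi(r))$ with $C=M_2(J)/(2N)$. The inequality $C\bigl(\phi''+\tfrac{N-1}{r}\phi'\bigr)+f(\phi)\le 0$ rewrites as $C\phi''+f(\phi)\le -\tfrac{C(N-1)}{r}\phi'$, and since you take $\phi$ non-decreasing, multiplying by $\phi'\ge 0$ gives
\[
H'(r)=\phi'(r)\bigl(C\phi''(r)+f(\phi(r))\bigr)\le -\frac{C(N-1)}{r}\,(\phi'(r))^2\le 0.
\]
Hence $H(R)\le H(R_1)$. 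With $\phi'(R_1)=0$ and $\phi(R_1)=c\in(0,\theta)$ this yields $G(\phi(R))\le G(c)<0<G(1)$ by~\eqref{C02}, so $\phi(R)\neq 1$. This is the classical Hamiltonian obstruction for bistable profiles: a super-solution of the scalar radial equation launched at level $c$ with zero slope can never exceed the conjugate level $c^*\in(\theta,1)$ given by $G(c^*)=G(c)$. No choice of $R_0$, $R$, or the constants in~\eqref{CNJ}--\eqref{R0} changes this. If instead you allow $\phi'(R_1)>0$, the same computation forces $\phi'(R_1)^2\ge \tfrac{2}{C}\bigl(G(1)-G(c)\bigr)>0$, a fixed positive lower bound; then, as you yourself observe, the channel-mouth contribution is of order $\phi'(R_1)\,\eps$ and overwhelms $\eps^2|f(c)|$. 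The two requirements---vanishing slope at $R_1$ for the channel mouth, and reaching $1$ at $R$---are therefore incompatible.

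This is exactly why the paper does \emph{not} build an explicit radial super-solution. Instead it solves, by a variational argument on the full nonlocal energy, an auxiliary equation on $B_R\setminus K_\eps$ and obtains $u_{\eps,R}=1-v_{\eps,R}$ with $v_{\eps,R}$ a local minimiser near $\mathds{1}_{B_{R_0}}$ in $L^2$. That function is not radial: its transition from small values in $B_{R_0}$ to values near $1$ outside $B_{R_1}$ takes place across the obstacle through the narrow channel $F_\eps$, where the Hamiltonian identity above simply does not apply. The role of the channel width $\eps^{N/(N-1)}$ is that the nonlocal energy leaked through $F_\eps$ is $O(\eps^{N+1})=o(\eps^2)$, negligible against the $\eps^2$ gap produced by the Poincar\'e estimate in $B_{R_0}$ (this---not anything about $\phi'(R_1)$---is what the constraint $R_0<R_0^*(J,f)$ is for). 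The continuous global super-solution is then obtained from $u_{\eps,R}$ by an extension across $\partial B_R$.
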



The proof of Lemma~\ref{bch-lem-supersol} follows essentially two steps. In the first step, we construct a positive solution to a suitable auxiliary problem defined in $B_{R}\setminus K_\eps$ for some large $R$. Then, in a second step, we regularise this solution to obtain  a super-solution  that  satisfies both \eqref{bch-eq-f} and \eqref{bch-eq-f-bc}. To simplify the presentation each step of the proof corresponds to a subsection.

\subsection{An auxiliary problem in $B_R\setminus K_\eps$}\label{SE:AUXPROBLEM:BC}
Let us first construct an adequate auxiliary problem. To do so, we define a new nonlinearity, $\widetilde{f}$, satisfying
\begin{equation}\label{def-tildef}
\tilde f(s):=\left\{\begin{array}{cl}
-\kappa s & \text{for } s\le \frac{3\theta}{4},\vspace{3pt}\\
f_0(s)  & \text{for }  \frac{3\theta}{4}< s< \theta, \vspace{3pt}\\
f(s) & \text{for } \theta\le s\le 1,\vspace{3pt}\\
f'(1)(s-1) & \text{for }  s> 1,
\end{array}
\right.
\end{equation}
where $\theta\in(0,1)$ is as in \eqref{C02}, $\kappa>0$ is a small number and $f_0$ is a smooth function such that $\tilde f \in C^1(\R)$.
From \eqref{C02}, we can choose $\kappa>0$ and $f_0$ such that
\begin{align}
f\le \tilde f\ \text{ in }[0,1],\ \max_{[0,1]}\tilde f(s)=\max_{[0,1]}f\ \text{and}\ \sup_{\R}\tilde f' \le \sup_{[0,1]}f'. \label{Controle:f:auxiliaire}
\end{align}
Now, for $R>R_1+2$, we let $L_{R,\eps}$ be the operator given by
\begin{equation}\label{def-LRE}
L_{R,\eps}w(x):= \int_{B_R\setminus K_\eps}J_\eps(x-y)(w(y)-w(x))\mathrm{d}y,
\end{equation}
and we consider the following problem
\begin{align}
L_{R,\eps}u_{\eps,R}(x)+c_\eps(x)(1-u_{\eps,R}(x))+\tilde f_\eps(u_{\eps,R}(x))=0\ \hbox{ for all }x\in\overline{B_R\setminus K_\eps},\label{KTR1}
\end{align}
where
\begin{align}
\tilde f_\eps (s)=\eps^2\tilde f(s)\text{ for $s\in\R$ and }c_\eps(x):=\int_{\R^N\setminus B_{R}}J_\eps(x-y)\hspace{0.05em}\mathrm{d}y\hbox{ for }x\in B_R\setminus K_\eps. \label{DonneeTildeF}
\end{align}
Our goal in this step is to show that, for each $\eps\in(0,1)$ small enough, there exists a continuous function $u_{\eps,R}:\overline{B_R\setminus K_\eps}\to(0,1)$ satisfying \eqref{KTR1}.
\begin{rem}
Observe that, by construction (remember \eqref{Controle:f:auxiliaire}), the function
$$ \widehat{u}_{\eps,R}:=\left\{\begin{array}{cl} u_{\eps,R} & \text{in }\overline{B_R\setminus K}, \vspace{3pt}\\ 1 & \text{in }\R^N\setminus \overline{B}_R, \end{array}\right. $$
provides a \emph{discontinuous} super-solution to \eqref{bch-eq-f} satisfying \eqref{bch-eq-f-bc}.  We are thus on the right track to construct the required super-solution.
\end{rem}
For it, we observe that, by setting $v_{\eps,R}:=1-u_{\eps,R}$,~\eqref{KTR1} rewrites
\begin{align}
L_{R,\eps}v_{\eps,R}(x)-c_\eps(x)v_{\eps,R}(x)+g_\eps(v_{\eps,R}(x))=0\ \hbox{ for  }x\in\overline{B_R\setminus K_\eps},\label{NEWktr}
\end{align}
with $g_\eps(s):=-\eps^2\tilde{f}(1-s)$. Therefore, to construct $u_{\eps,R}$ it suffices to construct a positive solution $v_{\eps,R}:\overline{B_R\setminus K}\to(0,1)$ to \eqref{NEWktr}.
As in \cite{BHM}, this will be done using a variational argument. To do so,
we define
$$ g(s):=-\tilde{f}(1-s),\ \,  G(t):=\int_0^t g(s)\hspace{0.05em}\mathrm{d}s  \ \text{ and }\ G_\eps(t):=\eps^{2}G(t),$$
for all $s,t\in\R$ and $\eps\in(0,1)$.
Now, for any $\varepsilon\in(0,1)$ and any domain $\Omega\subset B_R\setminus K_\eps$, we consider the following energy functional
\begin{equation}\label{def-enerOmega}
\mathcal{E}_{\eps,\Omega}(w):=\frac{1}{4}\int_{\Omega}\int_{\Omega}J_\eps(x-y)(w(x)-w(y))^2\mathrm{d}x\mathrm{d}y+\frac{1}{2}\int_{\Omega}c_\eps(x)\hspace{0.05em}w^2(x)\hspace{0.05em}\mathrm{d}x-\int_{\Omega}G_\eps(w(x))\hspace{0.05em}\mathrm{d}x,
\end{equation}
for $w\in L^2(\Omega)$.
Observe that for any $\eps>0$ and any domain $\O\subset B_R\setminus K_\eps$, the null function $w\equiv 0$ is a \emph{global} minimiser of $\mathcal{E}_{\eps,\Omega}$. Therefore, we have to construct a \emph{local} minimiser. However, unlike its local analogue, the energy functional $\e_{\eps,\O}$ does \emph{not} posses strong compactness properties, rendering this type of approach very delicate to implement.

With this in mind, we will show that, for the family $K_\eps$ constructed in Section \ref{section-data}  and  $\eps$ small enough, the above energy has indeed a nontrivial local minimiser when $\O=B_R\setminus K_\eps$.

Following the scheme of construction introduced in \cite{BHM}, we first show that the function $w_0:=\mathds{1}_{B_{R_{0}}}$ is a strict minimiser of the functional $\mathcal{E}_{\eps,B_{R_{0}}}$ when $\eps\in(0,1)$ is small enough.

More precisely, 

\begin{prop}\label{Poincare}
Let $N\ge 2$, $0<R_0<R_0^*(J,f)$ (where $R_0^*(J,f)$ is given by \eqref{R0}) and let $w_0:=\mathds{1}_{B_{R_{0}}}$. Then, there  exists $\kappa_0>0$, $0<\eps_1(J,N,R_0)<1$ and $0<\delta_0(R_0)<|B_{R_{0}}|^{1/2}$ such that, for each $0<\eps< \eps_{1}$, it holds that
$$\mathcal{E}_{\eps,B_{R_{0}}}(w)-\mathcal{E}_{\eps,B_{R_{0}}}(w_0)\ge \kappa_0 \eps^2\|w-w_0\|_{L^2(B_{R_{0}})}^2,$$
for all $w\in L^2(B_{R_0})$ such that $\|w-w_0\|_{L^2(B_{R_{0}})}\le \delta_0$.
\end{prop}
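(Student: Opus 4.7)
The plan is to expand the energy difference $\mathcal{E}_{\eps,B_{R_0}}(w)-\mathcal{E}_{\eps,B_{R_0}}(w_0)$ around $w_0\equiv 1$, exploit the exact quadratic structure of $G$ near $s=1$ that comes from the affine piece $\tilde f(t)=-\kappa t$ on $(-\infty,3\theta/4]$, and absorb the remaining errors by a Bourgain--Brezis--Mironescu (BBM) style Poincar\'e inequality on $B_{R_0}$; the bound $R_0<R_0^*(J,f)$ is calibrated precisely to make this absorption close.

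First, after a harmless rescaling of $J$ one may assume $\mathrm{supp}(J)=B_{1/2}$, hence $\mathrm{supp}(J_\eps)=B_{\eps/2}$. Then for $\eps$ small enough (depending on $R_0, R_1$), $c_\eps\equiv 0$ on $B_{R_0}$ since the distance from $B_{R_0}$ to $\R^N\setminus B_R$ exceeds $\eps/2$. Writing $h:=w-w_0\in L^2(B_{R_0})$, one obtains
\[
\mathcal{E}_{\eps,B_{R_0}}(w)-\mathcal{E}_{\eps,B_{R_0}}(w_0)=I_1(h)+\eps^2\tilde I_2(h),
\]
with $I_1(h)=\tfrac{1}{4}\iint_{B_{R_0}^2}J_\eps(x-y)(h(x)-h(y))^2\,\mathrm{d}x\mathrm{d}y$ and $\tilde I_2(h)=\int_{B_{R_0}}[G(1)-G(1+h)]\,\mathrm{d}x$. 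Unwinding definitions gives $g(s)=-\tilde f(1-s)=\kappa(1-s)$ for every $s\ge 1-3\theta/4$, whence the \emph{exact} pointwise identity
\[
G(1)-G(1+h)=\tfrac{\kappa}{2}h^2 \quad\text{whenever}\quad h\ge -\tfrac{3\theta}{4}.
\]

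Next, partition $B_{R_0}=A\sqcup A^c$ with $A:=\{h<-3\theta/4\}$, so $|A|\le (4/(3\theta))^2\|h\|_{L^2}^2$ by Chebyshev. The identity yields $\tilde I_2$'s integrand equal to $\tfrac{\kappa}{2}h^2$ exactly on $A^c$. On $A$, the at-most-linear growth of $g$ (inherited from $\tilde f(t)=f'(1)(t-1)$ for $t>1$ and $\tilde f(t)=-\kappa t$ for $t\le 3\theta/4$) yields $|G(1)-G(1+h)|\le C(1+h^2)$ for some $C=C(f,\kappa)$. Combining,
\[
\tilde I_2(h)\ \ge\ \tfrac{\kappa}{2}\|h\|_{L^2(A^c)}^2-C\bigl(|A|+\|h\|_{L^2(A)}^2\bigr).
\]

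Finally, the main step is to absorb the residual terms $C|A|+C\|h\|_{L^2(A)}^2$ into $I_1$. Two ingredients enter. First, the \emph{jump} inequality $(h(x)-h(y))^2\ge (h(y)+3\theta/4)^2$ on $A^c\times A$ (valid since $h(x)\ge -3\theta/4$ on $A^c$ and $h(y)<-3\theta/4$ on $A$) provides control of $\|h\|_{L^2(A)}^2$ by $I_1(h)$. Second, a quantitative nonlocal Poincar\'e--Wirtinger inequality of BBM type,
\[
I_1(h)\ \ge\ c(J,N,R_0)\,\eps^2\int_{B_{R_0}}(h-\bar h)^2\,\mathrm{d}x,
\]
with Poincar\'e constant $c(J,N,R_0)$ of order $M_2(J)/(NR_0^2)$, obtained by a slicing/Fubini argument exploiting the radial non-increasingness of $J_\eps$ and the sharp one-dimensional Neumann eigenvalue $\pi^2/(4R_0^2)$ on $(-R_0,R_0)$ (this is exactly the $\pi^2/32$ factor entering $C_{N,J}$). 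Combined with $|A|\lesssim \|h\|_{L^2}^2$, these yield the claim with margin $\kappa_0>0$ provided $R_0<R_0^*(J,f)=\sqrt{\theta C_{N,J}/(5C_0)}$ and $\delta_0>0$ is chosen small enough (depending on $R_0$). The main obstacle will be making this BBM-type inequality quantitative for every $h\in L^2(B_{R_0})$ (not merely $H^1$) and uniform in $\eps$ in the admissible range---the BBM theorem itself being only asymptotic---and the explicit form of $C_{N,J}$ and $R_0^*$ are precisely tuned to close the absorption with the explicit safety factor $5$ visible in the definition of $R_0^*$.
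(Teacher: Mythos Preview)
Your overall strategy is correct and close to the paper's: exploit the exact quadratic structure of $G$ near $1$, isolate a bad level set via Chebyshev, and close the estimate with a BBM/Ponce-type Poincar\'e inequality. However, there is a genuine structural gap. You set $h:=w-w_0$ and then invoke the Poincar\'e--Wirtinger inequality $I_1(h)\gtrsim\eps^2\|h-\bar h\|_{L^2}^2$, but this inequality is \emph{blind to the mean} $\bar h$, and nothing in your scheme cleanly recovers the contribution $|B_{R_0}|\bar h^2$ to $\|h\|_{L^2}^2$. Your ``jump inequality'' does not fill this hole: it only yields $(h(y)+3\theta/4)^2$ rather than $h(y)^2$, and more seriously the weight $\int_{A^c}J_\eps(x-y)\,\mathrm{d}x$ need not be uniformly bounded below for $y\in A$ (e.g.\ if $A$ contains a ball of radius $\eps$), so $I_1$ does not obviously dominate $\eps^2\|h\|_{L^2(A)}^2$. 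The paper handles this by first performing the \emph{orthogonal} decomposition $w=\alpha w_0+h$ with $\int_{B_{R_0}}h=0$: the constant direction $(\alpha-1)w_0$ is treated \emph{exactly} by the quadratic identity (yielding $\tfrac{\kappa}{2}\eps^2|B_{R_0}|(\alpha-1)^2$), while the mean-zero $h$ is the object to which Poincar\'e--Wirtinger applies directly, giving $II\ge \eps^2\,\frac{C_{N,J}}{R_0^2}\|h\|_{L^2}^2$ with no loss. The error on the level set $\Sigma=\{-h>\theta/2\}$ is then bounded by $\tfrac{4C_0}{\theta}\|h\|_{L^2(\Sigma)}^2$ (using $\tilde f\le C_0$, Cauchy--Schwarz, and Chebyshev), and the calibration $C_{N,J}/R_0^2\ge 5C_0/\theta$ closes the estimate---this is precisely where the constants $C_{N,J}$ and $R_0^*$ enter, and your route does not produce the same error constant, so the match with the given $R_0^*$ is not established.

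A second, smaller point: you correctly flag that the BBM inequality must hold for \emph{all} $h\in L^2$ with a uniform constant in $\eps$, not merely asymptotically. The paper does not derive this by slicing; it simply invokes Ponce's quantitative Poincar\'e estimate \cite{Ponce2004}, valid for all $\eps<\eps_1(J,N,R_0)$ and all $L^2$ functions, combined with the sharp Payne--Weinberger/Bebendorf bound $A_0\le 4R_0^2/\pi^2$ for the classical Poincar\'e constant on the convex ball. Your proposed one-dimensional slicing argument would require additional justification to yield the constant $C_{N,J}=\pi^2 M_2(J)/(32N)$.
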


\begin{proof}
Let us begin with some preliminary observations. First, we notice that since
$g$ is linear around $1$ (because $\widetilde{f}$ is linear around $0$), the
function $G_\eps$ is smooth in a neighborhood of $1$. In particular, there
exists $\tau_0(\widetilde{f})>0$ such that
$$ G_\eps(t)=G_\eps(1)+G_\eps'(1)(t-1)+\frac{1}{2}\hspace{0.05em}G_\eps''(1)(t-1)^2 \quad\text{for any }|t-1|<\tau_0. $$
But since $G_\eps'(1)=\eps^2G'(1)=\eps^2g(1)=0$ and $G_\eps''(1)=\eps^2G''(1)=\eps^2g'(1)=-\eps^2\widetilde{f}'(0)=-\eps^2\kappa$,
this expansion can be rewritten as
\begin{align}
G_\eps(t)=\eps^2G(1)-\frac{\kappa\hspace{0.05em}\eps^2}{2}(t-1)^2\quad\text{for any }|t-1|<\tau_0. \label{tau0}
\end{align}
Using the number $\tau_0$, we define
\begin{align}
\delta_0:=\min\left\{\frac{\theta}{4},\frac{C_0}{\kappa},\frac{\tau_0}{2}\right\}|B_{R_0}|^{1/2}, \label{Construction:de:delta0}
\end{align}
where $\theta$, $C_0$ and $\kappa$ are as in \eqref{C02}, \eqref{C_0} and \eqref{def-tildef}; and we let $w\in L^2(B_{R_0})$ be such that
\begin{align}
\|w-w_0\|_{L^2(B_{R_0})}\leq \delta_0. \label{Hypothese:sur:w:delta}
\end{align}
Second, denoting by $\langle w_0\rangle:=\mathrm{span}_{L^2(B_{R_{0}})}(w_0)$ the vector space spanned by $w_0$ and letting
$\langle w_0\rangle^\perp$ be its orthogonal with respect to the standard scalar product of $L^2(B_{R_{0}})$, we can
write the space $L^2(B_R)$ as the direct sum $L^{2}(B_{R_{0}})= \langle w_0\rangle \oplus \langle w_0\rangle^\perp$.
This means that we may always find a constant $\alpha\in\R$ and a
function $h\in\langle w_0\rangle^\perp$ such that $w$ decomposes as $w=\alpha\hspace{0.05em}w_0+h$.
In particular, the orthogonality of $h$ with respect to $w_0$ implies that
\begin{align}
\int_{B_{R_{0}}}h(x)\hspace{0.05em}\mathrm{d}x=0~~\text{and}~~ \|w-w_0\|_{L^2(B_{R_{0}})}^2=(1-\alpha)^2\|w_0\|_{L^2(B_{R_{0}})}^2+\|h\|_{L^2(B_{R_{0}})}^2. \label{est:h:alpha}
\end{align}
In view of this, assumption \eqref{Hypothese:sur:w:delta} gives
\begin{align}
-\frac{\delta_0}{|B_{R_{0}}|^{1/2}}\leq (1-\alpha)\leq \frac{\delta_0}{|B_{R_{0}}|^{1/2}}  \ \text{ and }\ \|h\|_{L^2(B_{R_{0}})}\le \delta_0 . \label{EST:alpha}
\end{align}
This fact will be abundantly used in the sequel.

This being said, we are now in position to prove Proposition~\ref{Poincare}.
For it, we observe that, since $w_0\equiv1$ in $B_{R_0}$, we have that
$$\mathcal{E}_{\eps,B_{R_{0}}}(w_0)=-\int_{B_{R_{0}}}G_\eps(w_0(x))\hspace{0.05em}\mathrm{d}x=-G_\eps(1)|B_{R_{0}}|=-\eps^2\,G(1)|B_{R_{0}}|. $$
Furthermore, thanks to $R>R_0+2$ and $\mathrm{supp}(J_\eps)\subset B_{\frac{\eps}{2}}$, we have that $c_\eps(x)\equiv 0$ in $B_{R_{0}}$, for any $0<\eps<1$. Consequently, $\e_{\eps,B_{R_{0}}}(w)$ rewrites
$$
\begin{array}{lccc}
\e_{\eps,B_{R_{0}}}(w)&=& \underbrace{\frac{1}{4}\int_{B_{R_{0}}}\int_{B_{R_{0}}} J_\eps(x-y)(w(x)-w(y))^2\mathrm{d}x\mathrm{d}y} &\underbrace{-\int_{B_{R_{0}}}G_\eps(w(x))\hspace{0.05em}\mathrm{d}x} .\\
& & \text{\emph{II}} & I
\end{array}
$$
Let us first estimate $\text{\emph{II}}$. In view of the Bourgain-Brezis-Mironescu representation of $H^1(B_{R_0})$ (see \cite{BBM}), one can interpret $\text{\emph{II}}$ as a nonlocal approximation of $\|\nabla w\|_{L^2(B_{R_0})}^2$. The crux of our strategy is that, as shown by Ponce \cite[Theorem 1.1]{Ponce2004}, this nonlocal approximation enjoys a Poincar\'e-type inequality. Let us now proceed.
Let $(\rho_\eps)_{0<\eps<1}$ be the family of radially symmetric mollifiers defined by
$$\rho_{\eps}(z):=M_2(J)^{-1}J_{\eps}(z)|z|^2\hspace{0.1em}\eps^{-2}\,\hbox{ for }\eps\in(0,1),$$
where $M_2(J)$ is given by \eqref{M2}.
Notice that, by construction, it satisfies
\begin{align*}
\rho_\varepsilon\geq0\ \hbox{ a.e. in}~~\mathbb{R}^N,~~\displaystyle\int_{\mathbb{R}^N}\rho_\varepsilon(z)\hspace{0.05em}\mathrm{d}z=1~~\text{and}~~\lim_{\varepsilon\to0^+}\int_{|z|\geq\tau}\rho_\varepsilon(z)\hspace{0.05em}\mathrm{d}z=0,
\end{align*}
for each $0<\eps<1$ and each $\tau>0$. Moreover, $\text{\emph{II}}$ can be rewritten as
$$\text{\emph{II}}=\eps^2\,\frac{M_2(J)}{4}\int_{B_{R_0}}\int_{B_{R_0}}\rho_\eps(x-y)\frac{|w(x)-w(y)|^2}{|x-y|^2}\mathrm{d}x\mathrm{d}y.$$
Now, by \cite[Theorem 1.1]{Ponce2004}, we know that there exists some $\eps_{1}=\eps_1(J,N,R_0)>0$ such that the following Poincar\'e-type inequality
$$\left\|w-\fint_{B_{R_0}} w\right\|_{L^2(B_{R_0})}^2\leq \frac{2\hspace{0.05em}A_0}{K_{2,N}} \int_{B_{R_0}}\int_{B_{R_0}}\rho_\eps(x-y)\frac{|w(x)-w(y)|^2}{|x-y|^2}\mathrm{d}x\mathrm{d}y\left(=\frac{8\hspace{0.05em}A_0\hspace{0.1em}\eps^{-2}}{K_{2,N}\hspace{0.1em}M_2(J)}\times\text{\emph{II}}\right), $$
holds for all $\eps\in(0,\eps_{1})$ and all $w\in L^2(B_{R_0})$. Here,
$$ K_{2,N}:=\int_{\S^{N-1}}(\sigma\cdot e_1)^2\,\mathrm{d}\mathcal{H}^{N-1}(\sigma)=\frac{1}{N}, $$
and $A_0>0$ is the smallest constant such that the standard Poincar\'e-Wirtinger inequality holds. That is, $A_0$ is the smallest positive constant such that
$$\left\|w-\fint_{B_{R_0}} w\right\|_{L^2(B_{R_0})}^2\!\leq A_0\|\nabla w\|_{L^2(B_{R_0})}^2, $$
holds for any $w\in H^1(B_{R_0})$. In our case, $A_0$ satisfies the upper bound:
$$A_0\leq\frac{\mathrm{diam}(B_{R_0})^2}{\pi^2}=\frac{4R_0^2}{\pi^2}, $$
see \cite[Theorem 3.2]{Beben} (see also \cite{Payne1960}). In particular, this gives
$$ \eps^2\,\frac{\pi^2\hspace{0.05em}M_2(J)}{32\hspace{0.05em}N\hspace{0.05em}R_0^2}\left\|w-\fint_{B_{R_0}} w\right\|_{L^2(B_{R_0})}^2\!\leq \text{\emph{II}}. $$
Now, since $w=\alpha\hspace{0.05em}w_0 +h$, since $w_0\equiv1$ on $B_{R_0}$ and since $h$ is integral free (by \eqref{est:h:alpha}) we have 	
\begin{align}
\text{\emph{II}}\geq\eps^2\,\frac{C_{N,J}}{R_0^2}\|h\|_{L^2(B_{R_0})}^2, \label{contribution:J}
\end{align}
where $C_{N,J}$ is given by \eqref{CNJ}.
We are now left to estimate $I$.  For it, we rewrite $I$ as follows
\begin{equation}\label{I}
 I=\e_{\eps,B_{R_{0}}}(w_0)-\int_{B_{R_{0}}}\big[G_\eps(w(x))-G_\eps(w_0(x))\big]\mathrm{d}x.
 \end{equation}
To estimate the last integral, we split it into two parts, $I_1$ and $I_2$, where
\begin{align*}
I_1&:=-\!\int_{B_{R_{0}}}\!\big[G_\eps(w_0\!+\!(\alpha\!-\!1) w_0\!+\!h)\!-\!G_\eps(w_0\!+\!(\alpha\!-\!1)w_0)\big], \\
I_2&:=-\!\int_{B_{R_{0}}}\!\big[G_\eps(w_0\!+\!(\alpha\!-\!1)w_0)\!-\!G_\eps(w_0)\big].
\end{align*}
Let us first estimate $I_2$. Using \eqref{Construction:de:delta0}, \eqref{Hypothese:sur:w:delta} and \eqref{EST:alpha} we have in particular that $|1-\alpha|<\tau_0$. This, together with \eqref{tau0}, gives
\begin{align*}
I_2=-\int_{B_{R_{0}}}\big[G_\eps(w_0\!+\!(\alpha\!-\!1)w_0)\!-\!G_\eps(w_0)\big]= \frac{\kappa}{2}\,\eps^2|B_{R_{0}}|(\alpha-1)^2.
\end{align*}
Therefore, recalling \eqref{I}, we get
\begin{equation}
I= \e_{\eps,B_{R_{0}}}(w_0) +\frac{\kappa}{2}\,\eps^2|B_{R_{0}}|(\alpha-1)^2 +I_1. \label{I2}
\end{equation}
Let us now estimate $I_1$. On account of \eqref{EST:alpha}, we may write
\begin{align}
\alpha=1-\eta\quad\text{for some }|\eta|\leq\frac{\delta_0}{|B_{R_0}|^{1/2}}. \label{parametre:eta}
\end{align}
Then, a standard change of variables yields
\begin{align*}
I_1=-\!\int_{B_{R_{0}}}\!\int_{\alpha}^{\alpha+h(x)}\!\!g_\eps(\tau)\mathrm{d}\tau\mathrm{d}x&=\eps^2\!\int_{B_{R_{0}}}\!\int_{1-\eta}^{1-\eta+h(x)}\!\widetilde{f}(1\!-\!\tau)\mathrm{d}\tau\mathrm{d}x=-\eps^2\!\int_{B_{R_{0}}}\!\int_{0}^{-h(x)}\!\widetilde{f}(\tau\!+\!\eta)\mathrm{d}\tau\mathrm{d}x.
\end{align*}
Now, we set
$$\Sigma:=\left\{x\in B_{R_{0}};-h(x)>\frac{\theta}{2}\right\},$$
and we decompose $I_1$ as
\begin{align}
I_1=-\eps^2\left(\int_{\Sigma}\int_{0}^{-h(x)}\!\widetilde{f}(\tau+\eta)\hspace{0.05em}\mathrm{d}\tau\mathrm{d}x +\int_{B_{R_{0}}\setminus\Sigma}\int_{0}^{-h(x)}\!\widetilde{f}(\tau+\eta)\hspace{0.05em}\mathrm{d}\tau\mathrm{d}x \right). \label{Decomposition:de:Iun}
\end{align}
We will estimate these two integrals separately. In view of \eqref{Construction:de:delta0} and \eqref{parametre:eta}, we have that $|\eta|\leq \theta/4$. In turn, this implies that
$$ -h(x)+|\eta|\leq\frac{3\theta}{4}\quad\text{for any }x\in B_{R_0}\setminus \Sigma. $$
Since, by construction, $\widetilde{f}$ is linear in $(-\infty,3\theta/4]$, we get
\begin{align*}
\int_{B_{R_{0}}\setminus\Sigma}\int_{0}^{-h(x)}\!\widetilde{f}(\tau+\eta)\hspace{0.05em} \mathrm{d}\tau\mathrm{d}x&=-\kappa\int_{B_{R_{0}}\setminus \Sigma}\left(\int_{0}^{\eta-h(x)}\tau\hspace{0.05em}\mathrm{d}\tau - \int_{0}^{\eta} \tau\hspace{0.05em} \mathrm{d}\tau\right)\mathrm{d}x \\
&=  -\frac{\kappa}{2} \int_{B_{R_{0}}\setminus\Sigma}h^2(x)\hspace{0.05em} \mathrm{d}x +\kappa\eta \int_{B_{R_{0}}\setminus \Sigma	}h(x)\hspace{0.05em}\mathrm{d}x
\\
&=  -\frac{\kappa}{2} \int_{B_{R_{0}}\setminus\Sigma}h^2(x)\hspace{0.05em}  \mathrm{d}x -\kappa\eta \int_{\Sigma}h(x)\hspace{0.05em}\mathrm{d}x,
\end{align*}
where, in the last equality, we have used the fact that $h$ is integral free, that is:
$$\int_{B_{R_{0}}\setminus \Sigma	}h(x)\hspace{0.05em}\mathrm{d}x+\int_{\Sigma}h(x)\hspace{0.05em}\mathrm{d}x=0.$$
Using now the Cauchy-Schwarz inequality, we get
\begin{equation*}
\int_{B_{R_{0}}\setminus\Sigma}\int_{0}^{-h(x)}\!\widetilde{f}(\tau+\eta)\hspace{0.05em}\mathrm{d}\tau\mathrm{d}x\le -\frac{\kappa}{2} \int_{B_{R_{0}}\setminus\Sigma}h^2(x)\hspace{0.05em} \mathrm{d}x +\kappa|\eta|\sqrt{|\Sigma|} \|h\|_{L^2(\Sigma)}.
\end{equation*}
By the Bienaym\'e-Chebyshev inequality, we have
\begin{align}
|\Sigma|\leq\left(\frac{2}{\theta}\right)^2\|h\|^2_{L^2(\Sigma)}, \label{Sigma:Chebyshev}
\end{align}
and thus
\begin{equation}
\label{esti-I1-1-1}
\int_{B_{R_{0}}\setminus\Sigma}\int_{0}^{-h(x)}\!\widetilde{f}(\tau+\eta)\hspace{0.05em} \mathrm{d}\tau\mathrm{d}x\le -\frac{\kappa}{2} \int_{B_{R_{0}}\setminus\Sigma}h^2(x)\hspace{0.05em} \mathrm{d}x +\frac{2\kappa|\eta|}{\theta} \|h\|^2_{L^2(\Sigma)}.
\end{equation}
Thanks to \eqref{Construction:de:delta0} and \eqref{parametre:eta}, \eqref{esti-I1-1-1} reduces to
\begin{equation}
\label{esti-I1-1}
\int_{B_{R_{0}}\setminus\Sigma}\int_{0}^{-h(x)}\!\widetilde{f}(\tau+\eta)\hspace{0.05em} \mathrm{d}\tau\mathrm{d}x\le -\frac{\kappa}{2} \int_{B_{R_{0}}\setminus\Sigma}h^2(x)\hspace{0.05em} \mathrm{d}x +\frac{2C_0}{\theta} \|h\|^2_{L^2(\Sigma)}.
\end{equation}
Let us now estimate the first integral on the right-hand side of \eqref{Decomposition:de:Iun}. For it, we observe that
$$ \tau+\eta\geq-|\eta|\geq\frac{-\delta_0}{\sqrt{|B_{R_0}|}}\geq-\frac{C_0}{\kappa}\quad\text{for any }x\in\Sigma\text{ and any }\tau\in(0,-h(x)). $$
Recalling \eqref{Controle:f:auxiliaire}, we then obtain
$$ \sup_{x\in\Sigma}\sup_{\tau\in(0,-h(x))}\widetilde{f}(\tau+\eta)\leq \sup_{s\geq-\frac{C_0}{\kappa}}\widetilde{f}(s)=\max_{s\in[0,1]}\widetilde{f}(s)=C_0. $$
This, together with the Cauchy-Schwarz inequality, gives
\begin{equation*}
\int_{\Sigma}\int_{0}^{-h(x)}\!\widetilde{f}(\tau+\eta)\hspace{0.05em} \mathrm{d}\tau\mathrm{d}x \le C_0 \int_{\Sigma}|h(x)|\hspace{0.05em}\mathrm{d}x\le C_0\sqrt{|\Sigma|}\|h\|_{L^2(\Sigma)},
\end{equation*}
Using the Bienaym\'e-Chebyshev inequality \eqref{Sigma:Chebyshev}, we finally get
\begin{equation}\label{esti-I1-2}
\int_{\Sigma}\int_{0}^{-h(x)}\!\widetilde{f}(\tau+\eta)\hspace{0.05em}\mathrm{d}\tau\mathrm{d}x \le \frac{2C_0}{\theta}\|h\|_{L^2(\Sigma)}^2.
\end{equation}
Collecting \eqref{contribution:J}, \eqref{I2}, \eqref{esti-I1-1} and \eqref{esti-I1-2}, we obtain that
\begin{align*}
\e_{\eps,B_{R_{0}}}(w)&-\e_{\eps,B_{R_{0}}}(w_0)\\
&\ge \eps^2\left( \frac{\kappa}{2}\,|B_{R_{0}}|(\alpha-1)^2+ \frac{\kappa}{2} \|h\|^2_{L^2(B_{R_{0}}\setminus \Sigma)} -\frac{4C_0}{\theta}\|h\|_{L^2(\Sigma)}^2+\,\frac{C_{N,J}}{R_0^2}\|h\|_{L^2(B_{R_{0}})}^2\right),
\end{align*}
for all $0<\eps<\eps_{1}$ and all $w\in L^2(B_{R_0})$ with $\|w-w_0\|_{L^2(B_{R_{0}})}\le \delta_0$.
Recalling that $0<R_0\le R^*_0(J,f)$ and using \eqref{R0}, we have $C_{N,J}/R_0^2 \ge 5C_0/\theta$. This, together with the above inequality, yields
 \begin{align*}
\e_{\eps,B_{R_{0}}}(w)-\e_{\eps,B_{R_{0}}}(w_0)&\ge \eps^2\left( \frac{\kappa}{2}\,|B_{R_{0}}|(\alpha-1)^2+ \frac{C_0}{\theta}\|h\|_{L^2(B_{R_{0}})}^2\right).
\end{align*}
Therefore, letting
$$ \kappa_0:=\inf\left\{\frac{\kappa}{2},\frac{C_0}{\theta}\right\}, $$
and recalling \eqref{est:h:alpha}, we obtain
\begin{equation*}
\e_{\eps,B_{R_{0}}}(w)-\e_{\eps,B_{R_{0}}}(w_0)\ge   \eps^2\kappa_0 \|w-w_0\|^2_{L^2(B_{R_{0}})},
\end{equation*}
for all $0<\eps<\eps_{1}$ and all $w\in L^2(B_{R_0})$ with $\|w-w_0\|_{L^2(B_{R_{0}})}\le \delta_0$.
\end{proof}
\begin{rem}\label{nonlocaltolocal}
Note that the proof of Proposition~\ref{Poincare} relies only on elementary $L^2$-estimates and on a Poincar\'e-type inequality. Remarkably, this allows to adapt straightforwardly our arguments to the local analogue of $\e_{\eps,B_{R_0}}$.
\end{rem}
Using Proposition~\ref{Poincare}, we now prove the following
\begin{prop}\label{Poincaree}
Let $N\ge 2$, and let $\e_{\eps,R}$ be the energy functional defined by \eqref{def-enerOmega} with $\O=B_R\setminus K_\eps$. Then, there  exists $C^*>0$, $0<\delta_0<|B_{R_{0}}|^{1/2}$ and $0<\eps_{\delta_0}<1$ such that, for any $0<\eps < \eps_{\delta_0}$ and any $w\in L^2(B_R\setminus K_\eps)$ with $\|w-w_0\|_{L^2(B_R\setminus K_\eps)}=\delta_0$, it holds that
$$ \mathcal{E}_{\eps,R}(w)-\mathcal{E}_{\eps,R}(w_0)>C^*\eps^2.$$
\end{prop}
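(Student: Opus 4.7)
The plan is to split $B_R\setminus K_\eps$ into the inner hole $\Omega_1:=B_{R_0}$ and the outer remainder $\Omega_2:=(B_R\setminus K_\eps)\setminus B_{R_0}$, and to decompose the energy accordingly. Setting $w_i:=w|_{\Omega_i}$ and
\[
\mathcal{I}_\eps(w_1,w_2):=\tfrac{1}{2}\int_{\Omega_1}\!\int_{\Omega_2} J_\eps(x-y)\bigl(w_1(x)-w_2(y)\bigr)^2\,\mathrm{d}x\,\mathrm{d}y\geq 0,
\]
a direct expansion of the double integrals (by symmetry of $J_\eps$) gives $\mathcal{E}_{\eps,R}(w)=\mathcal{E}_{\eps,\Omega_1}(w_1)+\mathcal{E}_{\eps,\Omega_2}(w_2)+\mathcal{I}_\eps(w_1,w_2)$. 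Since $R>R_0+2$ and $\mathrm{supp}(J_\eps)=B_{\eps/2}$, the weight $c_\eps$ vanishes on $\Omega_1$; moreover $w_0\equiv 1$ on $\Omega_1$ and $w_0\equiv 0$ on $\Omega_2$ so that $\mathcal{E}_{\eps,\Omega_2}(0)=0$ and
\[
\mathcal{E}_{\eps,R}(w)-\mathcal{E}_{\eps,R}(w_0)=\bigl[\mathcal{E}_{\eps,\Omega_1}(w_1)-\mathcal{E}_{\eps,\Omega_1}(\mathds{1}_{\Omega_1})\bigr]+\mathcal{E}_{\eps,\Omega_2}(w_2)+\bigl[\mathcal{I}_\eps(w_1,w_2)-\mathcal{I}_\eps(\mathds{1}_{\Omega_1},0)\bigr].
\]

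I would take the $\delta_0$ of the present statement to be the one provided by Proposition~\ref{Poincare}. Since $\|w_1-1\|_{L^2(\Omega_1)}\leq\|w-w_0\|_{L^2(B_R\setminus K_\eps)}=\delta_0$, Proposition~\ref{Poincare} directly gives the first bracket $\geq\kappa_0\eps^2\|w_1-1\|^2_{L^2(\Omega_1)}$ for $0<\eps<\eps_1$. For the $\Omega_2$ contribution the key step is to prove the pointwise lower bound $-G(t)\geq c_*t^2$ on $\R$ for some $c_*>0$. This combines three facts: $G(0)=G'(0)=0$ and $G''(0)=f'(1)<0$ (giving a positive quadratic expansion of $-G$ near $0$); the bistable structure of $\widetilde f$ together with $\int_0^1 f>0$ forces $G<0$ everywhere off the origin (the two interior critical points $t=1-\theta$ and $t=1$ both give negative values, $G(1-\theta)=-\int_\theta^1 f$ and $G(1)=-\int_0^1\widetilde f$); and since $\widetilde f$ is linear outside $[0,1]$, $G$ is asymptotically quadratic with $-G(t)/t^2\to\kappa/2$ as $t\to+\infty$ and $-G(t)/t^2\to|f'(1)|/2$ as $t\to-\infty$. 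Continuity of $-G(t)/t^2$ on $\R$ (after extension at $0$) together with positive limits at $\pm\infty$ yields a positive infimum $c_*$. Combined with the nonnegativity of the Dirichlet and $c_\eps$ terms, integration gives $\mathcal{E}_{\eps,\Omega_2}(w_2)\geq c_*\eps^2\|w_2\|^2_{L^2(\Omega_2)}$.

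The third bracket is handled by discarding the nonnegative quantity $\mathcal{I}_\eps(w_1,w_2)$ and estimating $\mathcal{I}_\eps(\mathds{1}_{\Omega_1},0)\leq\tfrac12|\Omega_2\cap B_{R_0+\eps/2}|$ (using $\mathrm{supp}(J_\eps)\subset B_{\eps/2}$ and $\int J_\eps=1$). The geometric conditions (i)--(iv) defining $K_\eps$ confine $\Omega_2\cap B_{R_0+\eps/2}$ to a tiny neighbourhood of the channel mouth: condition (i) forces $y_1>0$, condition (ii) forces $|y'|\leq 2\eps^\gamma$, and the radial thickness is at most $\eps/2$, producing $|\Omega_2\cap B_{R_0+\eps/2}|\leq C\eps\cdot\eps^{\gamma(N-1)}=C\eps^{N+1}$ thanks to $\gamma(N-1)=N$. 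Summing the three lower bounds and using $\|w_1-1\|^2_{L^2(\Omega_1)}+\|w_2\|^2_{L^2(\Omega_2)}=\delta_0^2$ yields
\[
\mathcal{E}_{\eps,R}(w)-\mathcal{E}_{\eps,R}(w_0)\geq \min(\kappa_0,c_*)\,\delta_0^2\,\eps^2-C\eps^{N+1},
\]
which strictly exceeds $C^*\eps^2:=\tfrac12\min(\kappa_0,c_*)\delta_0^2\eps^2$ provided $\eps<\eps_{\delta_0}$ with $\eps_{\delta_0}$ so small that $C\eps_{\delta_0}^{N-1}\leq\tfrac12\min(\kappa_0,c_*)\delta_0^2$ (possible since $N\geq 2$).

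The hardest step is the universal bound $-G(t)\geq c_*t^2$ on all of $\R$: the local Taylor expansion near the origin is routine, but extending to a single uniform constant requires both the bistable structure (to ensure $-G>0$ off the origin, hence positivity at interior points) and the precise linear form of $\widetilde f$ outside $[0,1]$ (to prevent degeneracy of $-G(t)/t^2$ at infinity). A secondary delicate point is the geometric estimate $|\Omega_2\cap B_{R_0+\eps/2}|=O(\eps^{N+1})$: the specific scaling $\eps^{N/(N-1)}$ of the channel width is essential here, since any slower decay would inflate the negative cross-term beyond order $\eps^2$ and overwhelm the positive contributions from the $\Omega_1$ and $\Omega_2$ estimates.
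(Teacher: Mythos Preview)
Your proposal is correct and follows essentially the same approach as the paper: decompose the energy over $B_{R_0}$ and its complement in $B_R\setminus K_\eps$, invoke Proposition~\ref{Poincare} on $B_{R_0}$, use the coercivity bound $-G(t)\ge c_*t^2$ on the complement, and absorb the cross term via the volume estimate $|F_\eps\cap B_{R_0+\eps/2}|=O(\eps^{N+1})$. The paper differs only cosmetically---it splits the complement further into $F_\eps$ and $B_R\setminus B_{R_1}$ and asserts the bound $-G(t)\ge\kappa_1 t^2$ without the detailed justification you provide---so no substantive deviation.
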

\begin{proof}
Let us first notice that our assumptions on $\tilde{f}$ imply that there is some $\kappa_1>0$ such that
\begin{align}
-G(t)\geq\kappa_1\hspace{0.05em}t^2 \, \mbox{ for every }t\in\R. \label{kappa}
\end{align}
Let us now compute the energy of $w_0$. Since $\mathrm{supp}(J_\eps)=B_{\eps/2}$ and $R_{1}-R_0>\eps$, a straightforward calculation yields
\begin{align}
\mathcal{E}_{\eps,R}(w_0)&=\mathcal{E}_{\eps,B_{R_{0}}}(w_0)\!+\!\frac{1}{2}\int_{F_\eps}\int_{B_{R_{0}}}J_\eps(x-y)\hspace{0.05em}\mathrm{d}x\mathrm{d}y. \nonumber
\end{align}
In addition, elementary computations yield
$$ \frac{1}{2}\int_{F_\eps}\int_{B_{R_{0}}}J_\eps(x-y)\hspace{0.05em}\mathrm{d}x\mathrm{d}y=\frac{1}{2}\int_{F_\eps\cap B_{R_0+\frac{\eps}{2}}}\left(\int_{B_{R_{0}}}J_\eps(x-y)\hspace{0.05em}\mathrm{d}x\right)\mathrm{d}y\leq \frac{|F_\eps\cap B_{R_0+\frac{\eps}{2}}|}{2}\leq C\eps^{N+1}, $$
for come constant $C=C(N)>0$. As a consequence, we obtain
\begin{align}
\mathcal{E}_{\eps,R}(w_0)\leq \mathcal{E}_{\eps,B_{R_{0}}}(w_0)+C\eps^{N+1}. \label{GENE:w0}
\end{align}
Next, developing $\mathcal{E}_{\eps,R}(w)$, we get
\begin{align}
\mathcal{E}_{\eps,R}(w)&=\mathcal{E}_{\eps,B_{R_{0}}}(w)+\mathcal{E}_{\eps,F_\eps}(w)+\mathcal{E}_{\eps,B_R \setminus B_{R_1}}(w) \nonumber \\
&\qquad+\frac{1}{2}\int_{F_\eps}\left(\int_{B_{R_{0}}}+\int_{B_R \setminus B_{R_1}}\right)J_\eps(x-y)(w(x)-w(y))^2\mathrm{d}x\mathrm{d}y. \nonumber
\end{align}
Using \eqref{kappa} we obtain that $\mathcal{E}_{\eps,\Omega}(w)\geq \kappa_1\hspace{0.05em}\eps^2\|w\|_{L^2(\Omega)}^2$ for any domain $\Omega\subset B_R\setminus K_\eps$. In particular, since $w_0=0$ in $F_\eps\cup B_R \setminus B_{R_1}$ we have
\begin{align}
\mathcal{E}_{\eps,B_R\setminus K_\eps}(w)\geq \mathcal{E}_{\eps,B_{R_{0}}}(w)+\kappa_1\hspace{0.05em}\eps^2\|w-w_0\|_{L^2(F_\eps\cup B_R \setminus B_{R_1})}^2. \label{GENE:w}
\end{align}
Gluing together \eqref{GENE:w0} and \eqref{GENE:w}, we obtain
\begin{align}
\mathcal{E}_{\eps,R}(w)-\mathcal{E}_{\eps,R}(w_0)\geq \mathcal{E}_{\eps,B_{R_{0}}}(w)-\mathcal{E}_{\eps,B_{R_{0}}}(w_0)+\kappa_1 \hspace{0.05em}\eps^2\|w-w_0\|_{L^2(F_\eps\cup B_R \setminus B_{R_1})}^2-C\eps^{N+1}. \label{1ereEST}
\end{align}
Now, by Proposition \ref{Poincare}, there exists $\kappa_0>0$, $0<\delta_0<|B_{R_{0}}|^{1/2}$ and $\eps_{1}>0$ such that, for any $0<\eps<\eps_{1}$ and any $w\in L^2(B_{R_0})$ with $\|w-w_0\|_{L^2(B_{R_{0}})}\le\delta_0$, we have
\begin{equation}\label{bch-eq-d0}
\mathcal{E}_{\eps,B_{R_{0}}}(w)-\mathcal{E}_{\eps,B_{R_{0}}}(w_0)\ge \kappa_0\hspace{0.05em}\eps^2\|w-w_0\|_{L^2(B_{R_{0}})}^2.
\end{equation}
Letting $\bar \kappa :=\min\{\kappa_1,\kappa_0\}$ and combining \eqref{bch-eq-d0} and \eqref{1ereEST}, we obtain
\begin{align}
\mathcal{E}_{\eps,R}(w)-\mathcal{E}_{\eps,R}(w_0)&\geq \eps^2\bar\kappa\|w-w_0\|_{L^2(B_R\setminus K_\eps)}^2 -C\eps^{N+1}=\eps^2\left(\bar\kappa \delta_0^2-C\eps^{N-1}\right),\label{2emeEST}
\end{align}
for all $0<\eps< \eps_{1}$ and all $w\in L^2(B_R\setminus K_\eps)$ with $\|w-w_0\|_{L^2(B_R\setminus K_\eps)}=\delta_0$. 
The conclusion now follows from \eqref{2emeEST} and the choice
$$ C^*=\frac{\bar \kappa \delta_0^2}{2}\quad\text{and}\quad\eps_{\delta_0}:=\min\left\{\eps_{1}, \left(\frac{\bar \kappa \delta_0^2}{2C}\right)^{\frac{1}{N-1}} \right\}. $$
The proof is thereby complete.
\end{proof}

We are now in position to construct a positive solution to \eqref{NEWktr}. 
\begin{prop}\label{nmeClm}
Let $N\ge 2$ and let $(J,f)$ be a pair satisfying \eqref{C02} and \eqref{J-assum}. Let $(K_\eps)_{0<\eps<1}$ be the family of obstacles associated to the pair $(J,f)$ (as defined in Section~\ref{section-data}).
Let $\tilde{f}$ be the extension of $f$ given by \eqref{def-tildef} and let $\tilde{f}_\eps$ and $J_\eps$ be respectively given by \eqref{DonneeTildeF} and \eqref{Donnees:fJ}.
Then, there exists  $\bar \eps>0$ such that, for all $0<\eps<\bar \eps$, there is a function $v_{\eps,R}\in C(\overline{B_R\setminus K_\eps})$ satisfying~\eqref{NEWktr} and $0<v_{\eps,R}<1$ in $\overline{B_R\setminus K_\eps}$.
\end{prop}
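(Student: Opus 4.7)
\emph{Overall strategy.} My plan is to obtain $v_{\eps,R}$ as a minimiser of $\mathcal{E}_{\eps,R}$ on the closed $L^2$-ball $\mathcal{B}_{\delta_0} := \{w \in L^2(B_R\setminus K_\eps) : \|w - w_0\|_{L^2(B_R\setminus K_\eps)} \le \delta_0\}$ around $w_0 := \mathds{1}_{B_{R_0}}$. The central issue — that the nonlocal operator $L_{R,\eps}$ enjoys no compactness nor regularising property — will be bypassed by rewriting $\mathcal{E}_{\eps,R}$ so that its nonconvex piece appears as a weakly continuous bilinear form.

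\emph{Existence of a minimiser.} I would split
$$ \mathcal{E}_{\eps,R}(w) = \int_{B_R\setminus K_\eps}\!\Phi_\eps(w(x),x)\,\mathrm{d}x - \frac{1}{2}\int_{B_R\setminus K_\eps}\! w(x) \int_{B_R\setminus K_\eps}\! J_\eps(x-y)\,w(y)\,\mathrm{d}y\,\mathrm{d}x, $$
where $\Phi_\eps(v,x) := \tfrac{1}{2}\bigl(\int_{\R^N\setminus K_\eps} J_\eps(x-y)\,\mathrm{d}y\bigr)v^2 - G_\eps(v)$. By Proposition~\ref{LE:CONTINUE}, together with $\sup_{\R}\tilde f'\le \sup_{[0,1]} f'$ from \eqref{Controle:f:auxiliaire}, the quadratic coefficient strictly dominates $g_\eps'(v)=\eps^2\tilde f'(1-v)$ for $\eps<\eps_0$; hence $\Phi_\eps$ is strictly convex in $v$, so the first term is weakly lsc on $L^2$. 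The bilinear term is weakly \emph{continuous}: since $J\in L^2$ by \eqref{J-assum}, a straightforward Fr\'echet-Kolmogorov argument on the bounded domain $B_R\setminus K_\eps$ shows that $w\mapsto J_\eps * w$ is compact on $L^2(B_R\setminus K_\eps)$, so any weakly convergent $w_n$ produces strongly convergent $J_\eps*w_n$ and the pairing passes to the limit. Since $\mathcal{B}_{\delta_0}$ is convex and closed, hence weakly compact, the direct method of the calculus of variations yields a minimiser $v_{\eps,R}\in \mathcal{B}_{\delta_0}$.

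\emph{Range and interior position.} Truncating any $w$ to $T(w) := (w\vee 0)\wedge 1$ decreases each of the three terms of $\mathcal{E}_{\eps,R}$ — the nonlocal Dirichlet form because $T$ is $1$-Lipschitz, the $c_\eps w^2$ term because $w_0\in[0,1]$, and the potential $-\int G_\eps(w)$ by a direct monotonicity check using \eqref{def-tildef} (outside $[0,1]$, $G_\eps$ is strictly smaller than on the nearest boundary point) — while also satisfying $\|T(w)-w_0\|_{L^2}\le\|w-w_0\|_{L^2}$. This forces $0\le v_{\eps,R}\le 1$ a.e. Proposition~\ref{Poincaree} then gives $\mathcal{E}_{\eps,R}(w) > \mathcal{E}_{\eps,R}(w_0)$ on the sphere $\|w-w_0\|_{L^2}=\delta_0$, so $\|v_{\eps,R}-w_0\|_{L^2}<\delta_0$. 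Being thus interior to $\mathcal{B}_{\delta_0}$, the minimiser $v_{\eps,R}$ is a free critical point of the $C^1$ functional $\mathcal{E}_{\eps,R}$, and its Euler-Lagrange equation is precisely \eqref{NEWktr}.

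\emph{Continuity and strict bounds.} Rewriting \eqref{NEWktr} as
$$ \int_{B_R\setminus K_\eps}\! J_\eps(x-y)v_{\eps,R}(y)\,\mathrm{d}y - \Bigl(\int_{\R^N\setminus K_\eps}\! J_\eps(x-y)\,\mathrm{d}y\Bigr)v_{\eps,R}(x) + g_\eps(v_{\eps,R}(x)) = 0 $$
fits the format of \eqref{eq-pre-reg} with $\Omega=B_R$, and the hypothesis \eqref{monotonie2} is exactly the bound furnished by Proposition~\ref{LE:CONTINUE}. Lemma~\ref{LEMMA:INT} then gives a continuous representative of $v_{\eps,R}$ on $\overline{B_R\setminus K_\eps}$. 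For strict positivity, if $v_{\eps,R}(x_0)=0$ the equation forces $\int J_\eps(x_0-y)v_{\eps,R}(y)\,\mathrm{d}y = 0$, hence $v_{\eps,R}\equiv 0$ on $B_{\eps/2}(x_0)\cap(B_R\setminus K_\eps)$; since $B_R\setminus K_\eps$ is connected by construction of $K_\eps$, iteration yields $v_{\eps,R}\equiv 0$, contradicting $\|v_{\eps,R}-w_0\|_{L^2}<|B_{R_0}|^{1/2}$. The symmetric propagation argument rules out $v_{\eps,R}\equiv 1$ provided $R\ge R^*$ is chosen so that $|B_R\setminus K_\eps\setminus B_{R_0}|>\delta_0^2$. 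The step I expect to be the main obstacle is proving lower semi-continuity: both the convexity of $\Phi_\eps$ (triggered by Proposition~\ref{LE:CONTINUE}) and the compactness of $J_\eps*\cdot$ (triggered by $J\in L^2$ from \eqref{J-assum}) are essential, and neither is available in the general nonlocal framework of \cite{BCHV}.
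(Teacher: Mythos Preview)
Your proof is correct and follows essentially the same route as the paper's: the same energy decomposition into a convex local part $\int H_\eps(x,w(x))\,\mathrm{d}x$ (convexity via Proposition~\ref{LE:CONTINUE}) plus a weakly continuous bilinear form (via $J\in L^2$), minimisation on the same closed $L^2$-ball around $w_0$, interior position from Proposition~\ref{Poincaree}, continuity from Lemma~\ref{LEMMA:INT}, and propagation for the strict bounds. The only notable differences are cosmetic: you enforce $0\le v_{\eps,R}\le 1$ by an \emph{a priori} truncation argument and rule out $v_{\eps,R}\equiv 1$ via the ball constraint, whereas the paper replaces $v_j$ by $|v_j|$ along the minimising sequence and excludes $\sup v_{\eps,R}\ge 1$ by evaluating the equation at a point $x_0$ with $c_\eps(x_0)>0$ near $\partial B_R$; your justification of weak lower semicontinuity through compactness of $w\mapsto J_\eps * w$ is in fact more explicit than the paper's terse appeal to ``weak convergence and dominated convergence''.
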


\begin{proof}
Let $w_0:=\mathds{1}_{B_{R_0}}$ and let $0<\delta_0<|B_{R_{0}}|^{1/2}$ and $0<\eps_{\delta_0}<1$ be quantities constructed in the proof of Proposition~\ref{Poincaree}, namely such that
$$ \mathcal{E}_{\eps,R}(w)-\mathcal{E}_{\eps,R}(w_0)>C^*\eps^2,$$
holds for some constant $C^*>0$ and for any $0<\eps<\eps_{\delta_0}$ and any $w\in L^2(B_R\setminus K_\eps)$ with $\|w-w_0\|_{L^2(B_R\setminus K_\eps)}=\delta_0$. Let us fix $0<\eps<\bar{\eps}:=\min\{\eps_0,\eps_{\delta_0}\}$ where $\eps_0$ is as in Proposition~\ref{LE:CONTINUE}. Further, we denote by $\mathbb{B}_{\delta_0}(w_0)$ the following set:
$$ \mathbb{B}_{\delta_0}(w_0):=\big\{w\in L^2(B_R\setminus K_\eps)\,;\ \|w-w_0\|_{L^2(B_R\setminus K_\eps)}\leq\delta_0\big\}, $$
and we define
$$m:=\inf_{w\in \mathbb{B}_{\delta_0}(w_0)} \e_{\eps,R}(w).$$
Note that $m$ is well-defined since $\e_{\eps,R}$ is a non-negative continuous functional in $L^2(B_R\setminus K_\eps)$.

Using Lemma~\ref{Poincaree}, we will show that there is a \emph{local minimum} $v_{\eps,R}$ of the energy $\mathcal{E}_{\eps,R}$ in the ball $\mathbb{B}_{\delta_0}(w_0)$ which is also a solution to \eqref{NEWktr}. However, it must be noted that $\mathcal{E}_{\eps,R}$ lacks of strong compactness properties and passing to the limit along a subsequence is not straightforward. So let us first show that $m$ is achieved in $\mathbb{B}_{\delta_0}(w_0)$.

Take a minimising sequence $(v_j)_{j\in\N}\subset \mathbb{B}_{\delta_0}(w_0)$.
Notice that $|w|\in \mathbb{B}_{\delta_0}(w_0)$ for all $w\in \mathbb{B}_{\delta_0}(w_0)$.
Moreover, a straightforward computation shows that $\mathcal{E}_{\eps,R}(|v_j|)\leq \mathcal{E}_{\eps,R}(v_j)$ for all $j\geq0$. Thus, we may assume that the $v_j$'s are a.e. non-negative for every $j\geq0$. By \eqref{kappa}, we have $-G_\eps(t)\geq \kappa_1\hspace{0.05em}\eps^2\hspace{0.05em}t^2$ for all $t\in\R$. In particular, $\mathcal{E}_{\eps,R}(v_j)\geq \kappa_1\hspace{0.05em}\eps^2\hspace{0.05em}\|v_j\|_{L^2(B_R\setminus K_\eps)}^2$ for all $j\geq0$. Therefore $(v_j)_{j\in\N}$ is bounded in $L^2(B_R\setminus K_\eps)$. Whence, up to extract a subsequence, we obtain that $v_j$ converges weakly in $L^2(B_R\setminus K_\eps)$ to some $v_{\eps,R}\in \mathbb{B}_{\delta_0}(w_0)$ (notice that $\mathbb{B}_{\delta_0}(w_0)$ is \emph{closed} in $L^2(B_R\setminus K_\eps)$). Let us check that $v_{\eps,R}$ is indeed a minimiser of $\mathcal{E}_{\eps,R}$ in $\mathbb{B}_{\delta_0}(w_0)$. To this end, we shall introduce the following notations
$$ \mathcal{J}_\eps(x):=\int_{\R^N\setminus K_\eps} J_\eps(x-y)\hspace{0.05em} \mathrm{d}y\ \text{ and }\ H_\eps(x,s):=\int_0^{s}\big(\mathcal{J}_\eps(x)\hspace{0.1em}\tau -g_\eps(\tau)\big)\hspace{0.05em} \mathrm{d}\tau. $$
Since $0<\eps<\eps_0$, by Proposition \ref{LE:CONTINUE}, we have
\begin{align*}
\max_{[0,1]}f_\eps'<\inf_{\R^N\setminus K_\eps } \mathcal{J}_\eps. 
\end{align*}
Therefore, from the construction of $g_\eps$ (remember \eqref{Controle:f:auxiliaire}), we have
\begin{align}
g'_\eps(s)=\eps^2\tilde f'(1-s)\le \max_{\R}\widetilde{f}_\eps'\le \max_{[0,1]}f_\eps'<\inf_{\R^N\setminus K_\eps } \mathcal{J}_\eps \quad\text{for any }s\in\R. \label{Cdn:assurant:continuite}
\end{align}
Whence, $H_\eps(x,\cdot)$ is convex for each fixed $x$.
Developing the terms involved in the definition of $\mathcal{E}_{\eps,R}$ we arrive at
$$ \mathcal{E}_{\eps,R}(w)=-\frac{1}{2}\int_{B_R\setminus K_\eps}\int_{B_R\setminus K_\eps}J_\eps(x-y)\hspace{0.05em}w(x)\hspace{0.05em}w(y)\hspace{0.05em}\mathrm{d}x\mathrm{d}y+\int_{B_R\setminus K_\eps} H_\eps(x,w(x))\hspace{0.05em}\mathrm{d}x. $$
Using the weak convergence of $(v_j)_{j\in\N}$ towards $v_{\eps,R}$ and the dominated convergence theorem, we can pass to the limit in the double integral and  get that
$$\lim_{j\to+\infty}\int_{B_R\setminus K_\eps}\int_{B_R\setminus K_\eps}J_\eps(x-y)\hspace{0.05em}v_j(x)\hspace{0.05em}v_j(y)\hspace{0.05em}\mathrm{d}x\mathrm{d}y= \int_{B_R\setminus K_\eps}\int_{B_R\setminus K_\eps}J_\eps(x-y)\hspace{0.05em}v_{\eps,R}(x)\hspace{0.05em}v_{\eps,R}(y)\hspace{0.05em}\mathrm{d}x\mathrm{d}y.$$
Moreover, since $H_\eps(x,\cdot)$ is convex, we have
$$\int_{B_R\setminus K_\eps} \big[H_\eps(x,v_j(x))-H_\eps(x,v_{\eps,R}(x))\big]\mathrm{d}x\geq \int_{B_R\setminus K_\eps} \partial_sH_\eps(x,v_{\eps,R}(x))\hspace{0.05em}(v_j(x)-v_{\eps,R}(x))\mathrm{d}x.$$
From the definition of $H_\eps$, $g_\eps$ and from \eqref{Cdn:assurant:continuite} a quick computation shows that $|\partial_sH_\eps(x,s)|=|\mathcal{J}_\eps(x)s-g_\eps(s)|\leq A\hspace{0.05em}|s|$ for all $s\in\R$
and some constant $A>0$. Since $v_{\eps,R}\in L^2(B_R\setminus K_\eps)$, it follows that $\partial_sH_\eps(\cdot,v_{\eps,R}(\cdot))\in L^2(B_R\setminus K_\eps)$. Therefore, using the previous two displayed formulas and the weak convergence of $v_j$ towards $v_{\eps,R}$, we obtain $\lim_{j\to\infty}[\mathcal{E}_{\eps,R}(v_j)-\mathcal{E}_{\eps,R}(v_{\eps,R})]\geq 0$. Since, on the other hand, $\lim_{j\to\infty}\,\mathcal{E}_{\eps,R}(v_j)=m\leq \mathcal{E}_{\eps,R}(v_{\eps,R})$, we finally obtain
$$ \mathcal{E}_{\eps,R}(v_{\eps,R})=m=\inf_{w\in \mathbb{B}_{\delta_0}(w_0)}\,\mathcal{E}_{\eps,R}(w)\le \mathcal{E}_{\eps,R}(w_0). $$
Now, thanks to Proposition ~\ref{Poincaree}, we deduce that $v_{\eps,R}\in \mathbb{B}_{\delta_0}(w_0) $ is a local minimiser and, as such,  $v_{\eps,R}$ solves~\eqref{NEWktr} almost everywhere in $\overline{B_R\setminus K_\eps}$.

Let us now check that $v_{\eps,R}$ is a continuous solution to~\eqref{NEWktr} in the whole set $\overline{B_R\setminus K_\eps}$. Since $J_\eps\in L^2(\R^N)$ and $v_{\eps,R}\in L^2(B_R\setminus K_\eps)$, it follows from the equation~\eqref{NEWktr} satisfied by $v_{\eps,R}$ that $N_\eps(\cdot,v_{\eps,R}(\cdot))\in L^\infty(B_R\setminus K_\eps)$ where $N_\eps(x,s):=\mathcal{J}_\eps(x)\hspace{0.1em}s-g_\eps(s)$. By \eqref{Cdn:assurant:continuite}, the map $N_\eps(x,\cdot)$ is bijective and thus $v_{\eps,R}\in L^\infty(B_R\setminus K_\eps)$. Using now Lemma \ref{LEMMA:INT} and \eqref{Cdn:assurant:continuite} we may further infer that $v_{\eps,R}$ is continuous in $\overline{B_R\setminus K_\eps}$.

To complete the proof it remains to show that $0<v_{\eps,R}<1$. Let us first prove that $v_{\eps,R}<1$. Suppose, by contradiction, that $\|v_{\eps,R}\|_\infty\geq1$. Then, by continuity of $v_{\eps,R}$, there must be a point $\bar{x}\in\overline{B_R\setminus K_\eps}$ at which $v_{\eps,R}$ attains its maximum, i.e. $v_{\eps,R}(\bar{x})=\|v_{\eps,R}\|_\infty$. Using now the equation satisfied by $v_{\eps,R}$, we have
$$ 0\geq\int_{B_R\setminus K_\eps}J_\eps(\bar{x}-y)(v_{\eps,R}(y)-v_{\eps,R}(\bar{x}))\mathrm{d}y=c_\eps(\bar{x})\hspace{0.05em}v_{\eps,R}(\bar{x})-g_\eps(v_{\eps,R}(\bar{x}))\geq0. $$
Thus, since $\mathrm{supp}(J_\eps)=B_{\eps/2}$, we have $v_{\eps,R}(y)=v_{\eps,R}(\bar{x})$ for any $y\in B_{\eps/2}(\bar{x})\cap\overline{B_R\setminus K_\eps}$. Note that $B_{\eps/2}(\bar{x})\cap\overline{B_R\setminus K_\eps}$ is nonempty whence we may iterate this reasoning over again and obtain that $v_{\eps,R}\equiv v_{\eps,R}(\bar{x})\geq1$. Now choose $x_0\in \O_\eps$ such that $c_\eps(x_0)>0$. Then, evaluating  \eqref{NEWktr} at $x_0$, one obtains
$$ 0=\int_{B_R\setminus K_\eps}J_\eps(x_0-y)(v_{\eps,R}(y)-v_{\eps,R}(x_0))\mathrm{d}y=c_\eps(x_0)\hspace{0.05em}v_{\eps,R}(x_0)-g_\eps(v_{\eps,R}(x_0))\ge c_\eps(x_0)>0, $$
which is a contradiction.

Therefore $v_{\eps,R}<1$. Since, by construction, we have that $v_{\eps,R}\geq0$, it remains to check that $v_{\eps,R}$ cannot cancel. Assume, by contradiction, that this is the case, namely that there exists a point $x_0\in\overline{B_R\setminus K_\eps}$ such that $v_{\eps,R}(x_0)=0$. Then, by~\eqref{NEWktr}, we have that
$$ \int_{B_R\setminus K_\eps}J_\eps(x_0-y)(v_{\eps,R}(y)-v_{\eps,R}(x_0))\mathrm{d}y=0, $$
and, as above, this implies that $v_{\eps,R}\equiv0$. However, since $v_{\eps,R}\in\mathbb{B}_{\delta_0}(w_0)$ and $\delta_0<|B_{R_{0}}|^{1/2}$, we have $\delta_0\geq\|v_{\eps,R}-w_0\|_{L^2(B_R\setminus K_\eps)}=\|w_0\|_{L^2(B_R\setminus K_\eps)}=|B_{R_{0}}|^{1/2}>\delta_0$, which is a contradiction. The proof of Proposition~\ref{nmeClm} is thereby complete.
\end{proof}
From now on (and until the end of Section~\ref{section-supersol}), $\eps$ will be fixed and taken so small  that  $0<\eps<\bar \eps$, where $\bar \eps$ is as defined in Proposition \ref{nmeClm}.

\subsection{An extension procedure}
Let us now complete the proof of Lemma \ref{bch-lem-supersol}.
We will modify the function $v_{\eps,R}$ constructed above in order to get a continuous super-solution to \eqref{bch-eq-f} satisfying \eqref{bch-eq-f-bc}.
Let us briefly explain our strategy.
Since, by construction, $v_{\eps,R}$ satisfies \eqref{NEWktr}, the function $u_{\eps,R}=1-v_{\eps,R}$ verifies \eqref{KTR1} and,
as already noted above, extending the function $u_{\eps,R}$  by $1$ outside $B_{R}$, we obtain a (discontinuous) super-solution to ~\eqref{bch-eq-f} that satisfies \eqref{bch-eq-f-bc}.
The aim of this section is to find the right extension of $u_{\eps,R}$ that provides the desired super-solution.

To do so, we first introduce some useful notations. Given $R>0$ and $x\in\R^N$, we let $\mathcal{P}_{R}(x)$ be the projection of $x$ to the ball $\overline{B_R}$, that is
$$\mathcal{P}_{R}(x)\in \overline{B_R}\ \text{ and }\ |x-\mathcal{P}_{R}(x)|=\mathrm{dist}(x,B_R)=\min_{y\in\overline{B_R}}|x-y|.$$
For $\sigma>0$, we let $\overline{u}_{\eps,\sigma} \in C(\overline{\R^N\setminus K_\eps})$ be the following  function
\begin{equation}\label{bch-def-supersol}
\overline{u}_{\eps,\sigma}(x):=\min\big\{u_{\eps,R}(\p{R}(x))+\sigma^{-1}\,|x-\mathcal{P}_{R}(x)|,1\big\}.
\end{equation}
We shall see that, for well-chosen $\sigma$, the function $\overline{u}_{\eps,\sigma}$ will satisfy
\begin{equation}\label{bch-eps-supersol}
L_{\eps}\overline{u}_{\eps,\sigma}(x) + \tilde f_\eps( \overline{u}_{\eps,\sigma}(x))\le 0 \quad \text{for all }x \in \R^N\setminus K_\eps,
\end{equation}
where $L_\eps$ is the nonlocal operator given by
\begin{align}
L_{\eps}w(x):=\int_{\R^N\setminus K_\eps}J_\eps(x-y)(w(y)-w(x))\mathrm{d}y. \label{Loperateur:Leps}
\end{align}
Namely, we claim
\begin{claim}\label{BC:claim:fin}
There exists $\sigma_\eps>0$ such that $\overline{u}_{\eps,\sigma}$ satisfies \eqref{bch-eps-supersol} for all $0<\sigma<\sigma_\eps$.
\end{claim}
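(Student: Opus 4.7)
The plan is to verify the inequality pointwise by distinguishing three cases according to the position of $x \in \R^N \setminus K_\eps$: \emph{(i)} $x \in B_R \setminus K_\eps$; \emph{(ii)} $x \in \R^N \setminus \overline{B_R}$ with $\overline{u}_{\eps,\sigma}(x) = 1$; and \emph{(iii)} $x \in \R^N \setminus \overline{B_R}$ in the transition layer $\{\overline{u}_{\eps,\sigma} < 1\}$. Cases (i) and (ii) will impose no constraint on $\sigma$. In case (i) one has $\mathcal{P}_R(x) = x$ and hence $\overline{u}_{\eps,\sigma}(x) = u_{\eps,R}(x)$; decomposing
\begin{equation*}
L_\eps \overline{u}_{\eps,\sigma}(x) = L_{R,\eps} u_{\eps,R}(x) + \int_{\R^N \setminus B_R} J_\eps(x-y)\bigl(\overline{u}_{\eps,\sigma}(y) - u_{\eps,R}(x)\bigr)\,\mathrm{d}y,
\end{equation*}
bounding $\overline{u}_{\eps,\sigma}(y) \leq 1$ on the second integral (which is thus at most $c_\eps(x)(1 - u_{\eps,R}(x))$), and invoking equation \eqref{KTR1} would yield $L_\eps \overline{u}_{\eps,\sigma}(x) + \tilde{f}_\eps(\overline{u}_{\eps,\sigma}(x)) \leq 0$. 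Case (ii) is immediate from $\tilde{f}_\eps(1) = 0$ and $\overline{u}_{\eps,\sigma}(y) \leq 1$.

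Case (iii) is the crux. For such $x$, I would set $p := \mathcal{P}_R(x) \in \partial B_R$ and $\delta := |x - p|$, so that $\overline{u}_{\eps,\sigma}(x) = u_{\eps,R}(p) + \sigma^{-1}\delta$. Since $R > R_1 + 2$, the points $x$ and $p$ stay far from $K_\eps$, giving $\mathcal{J}_\eps(x) = \mathcal{J}_\eps(p) = 1$; bounding $\overline{u}_{\eps,\sigma}(y) \leq 1$ on $\R^N \setminus B_R$ reduces the target inequality to
\begin{equation*}
\int_{B_R \setminus K_\eps} J_\eps(x-y)\,u_{\eps,R}(y)\,\mathrm{d}y + \tilde{c}_\eps(x) \;\leq\; \overline{u}_{\eps,\sigma}(x) - \tilde{f}_\eps(\overline{u}_{\eps,\sigma}(x)),
\end{equation*}
where $\tilde{c}_\eps(x) := \int_{\R^N \setminus B_R} J_\eps(x-y)\,\mathrm{d}y$. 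The key idea is to exploit \eqref{KTR1} at the nearby boundary point $p$: after rearrangement (using $\mathcal{J}_{R,\eps}(p) + c_\eps(p) = 1$) it yields the identity
\begin{equation*}
\int_{B_R \setminus K_\eps} J_\eps(p-y)\,u_{\eps,R}(y)\,\mathrm{d}y = u_{\eps,R}(p) - c_\eps(p) - \tilde{f}_\eps(u_{\eps,R}(p)),
\end{equation*}
which must then be transferred quantitatively from $p$ to $x$. The assumption $J \in W^{1,1}$ from \eqref{C01} supplies precisely such an $L^1$-translation estimate, namely $\|J_\eps(\cdot - x) - J_\eps(\cdot - p)\|_{L^1(\R^N)} \leq \delta\,\eps^{-1}\|\nabla J\|_{L^1}$, and this simultaneously controls the perturbation of the transferred integral and of $|\tilde{c}_\eps(x) - c_\eps(p)|$, with total error at most $2\delta\eps^{-1}\|\nabla J\|_{L^1}$.

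Finally, using $\sup \tilde{f}' \leq \sup_{[0,1]} f' < 1$ from \eqref{Controle:f:auxiliaire} and \eqref{C02} to bound $\tilde{f}_\eps(u_{\eps,R}(p) + \sigma^{-1}\delta) - \tilde{f}_\eps(u_{\eps,R}(p)) \leq \eps^2\sigma^{-1}\delta$, the required inequality collapses to
\begin{equation*}
\sigma^{-1}\delta \;\geq\; 2\|\nabla J\|_{L^1}\,\delta\eps^{-1} + \eps^2\sigma^{-1}\delta,
\end{equation*}
which will hold as soon as $\sigma \leq \sigma_\eps := \eps(1-\eps^2)/(2\|\nabla J\|_{L^1})$, a positive threshold for $\eps \in (0,1)$. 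The main obstacle is exactly case (iii): the continuous ``ramp'' extension of $u_{\eps,R}$ outside $\overline{B_R}$ deviates from the discontinuous limit (extension by $1$) within a thin layer near $\partial B_R$, and the super-solution property is not automatic there. The argument hinges on the $W^{1,1}$-regularity of $J$, which makes the nonlocal translation error scale as $\delta/\eps$; this is precisely what is needed for the ``slope gain'' $\sigma^{-1}\delta$ induced by the ramp to dominate for small $\sigma$.
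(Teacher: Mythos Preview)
Your proposal is correct and follows essentially the same three-case decomposition as the paper, with the crucial case (iii) handled via the $W^{1,1}$ translation estimate for $J_\eps$ combined with equation~\eqref{KTR1} at the projected point $p=\mathcal{P}_R(x)$. The only minor difference is cosmetic: the paper packages the monotonicity through the function $g_R(x,s)=\mathcal{J}_\eps(\mathcal{P}_R(x))\hspace{0.05em}s-\tilde f_\eps(s)$ and invokes Proposition~\ref{LE:CONTINUE} to obtain $\partial_s g_R>\gamma$, whereas you exploit directly that $\mathcal{J}_\eps\equiv 1$ near $\partial B_R$ (valid since $R>R_1+2$ and $\mathrm{supp}(J_\eps)\subset B_{\eps/2}$) together with $\max_{[0,1]} f'<1$ from~\eqref{C02}; this yields a slightly more explicit threshold $\sigma_\eps$ but is otherwise the same argument.
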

Observe that by proving Claim~\ref{BC:claim:fin}, we end the proof of Lemma \ref{bch-lem-supersol}. Indeed, by construction, we have $f\leq\widetilde{f}$ so that $\overline{u}_{\eps,\sigma}$ trivially satisfies \eqref{bch-eq-f}. As for condition \eqref{bch-eq-f-bc} it is also satisfied (by construction of $\overline{u}_{\eps,\sigma}$) provided that $R$ is taken sufficiently large.
\begin{proof}
Define $\a_R:=\R^N\setminus \overline{B_R}$. As in the previous section, we set
$$\j_\eps(x)=\int_{\R^N\setminus K_\eps}J(x-y)\hspace{0.05em} \mathrm{d}y\ \text{ and }\ c_\eps(x)=\int_{\R^N\setminus B_R} J_\eps(x-y)\hspace{0.05em} \mathrm{d}y.$$
Then, in view of \eqref{bch-def-supersol}, we have
\begin{align}\label{bch-eq-master}
L_{\eps}\overline{u}_{\eps,\sigma}(x) + \tilde f_\eps( \overline{u}_{\eps,\sigma}(x)) &\le  \int_{B_R\setminus K_\eps}J_\eps(x-y)(u_{\eps,R}(y)-\overline{u}_{\eps,\sigma}(x))\mathrm{d}y \nonumber \\
&\qquad+ c_\eps(x)(1-\overline{u}_{\eps,\sigma}(x)) + \tilde f(\overline{u}_{\eps,\sigma}(x)).
\end{align}
Since $\overline{u}_{\eps,\sigma}(x)=u_{\eps,R} (x)$ for all $x\in \overline{B_R\setminus K_\eps}$, using \eqref{KTR1} we easily get that
\begin{equation}\label{bch-eq-ss1}
L_{\eps}\overline{u}_{\eps,\sigma}(x) + \tilde f_\eps( \overline{u}_{\eps,\sigma}(x))\le 0 \quad \text{for }x\in \overline{B_R\setminus K_\eps}.
\end{equation}
To complete the proof, it remains to show that $\overline{u}_{\eps,\sigma}$ satisfies \eqref{bch-eps-supersol}  in the set $\a_R$. We shall consider two sub-domains, $\Pi^+$ and $\Pi^-$, defined as follows
$$\baa{l}
\Pi^-:=\a_R\cap\big\{\overline{u}_{\eps,\sigma}<1\big\},\vspace{3pt}\\
\Pi^+:=\a_R\cap\big\{\overline{u}_{\eps,\sigma}=1\big\}.\eaa$$
Note that since $\overline{u}_{\eps,\sigma}(x)=1$ for all $x \in \Pi^+$, it follows directly from \eqref{bch-eq-master} that
\begin{equation}\label{bch-eq-ss2}
L_{\eps}\overline{u}_{\eps,\sigma}(x) + \tilde f_\eps( \overline{u}_{\eps,\sigma}(x))=  \int_{\R^N\setminus K_\eps}J_\eps(x-y)(u_{\eps,R}(y) -1)\mathrm{d}y\le 0 \quad\text{for any } x\in \Pi^+.
\end{equation}
Thus, to conclude the proof we need only to check that \eqref{bch-eq-ss2} still holds in $\Pi^-$.
To this end, for any $x\in \Pi^-$ and any $s\in[0,1]$, we set
\begin{align}
g_R(x,s):=\j_\eps(\p{R}(x))\hspace{0.05em}s-\tilde f_\eps(s). \label{BC:def:de:gR}
\end{align}
Now, since $0<\eps<\eps_0$, it follows from Proposition \ref{LE:CONTINUE} that there exists a $\gamma>0$ such that
\begin{equation}\label{eq:gamma}
\inf_{z\in\R^N\setminus K_\eps}\min_{s\in [0,1]}\partial_s g_R(z,s)>\gamma.
\end{equation}
Next, since $J\in W^{1,1}(\R^N)$ (by~\eqref{C01}) we may set
\be\label{defdelta0}
\sigma_\eps:=\eps\hspace{0.1em}\gamma\times\left(\int_{\R^N}|\nabla J(z)|\hspace{0.05em}\mathrm{d}z\right)^{-1}>0.
\ee
Let us also set
$$s(x):=u_{\eps,R}(\p{R}(x))\ \hbox{ and }\ \tau(x):= \mathrm{dist}(x,B_R)=|x-\p{R}(x)|>0.$$
Then, $\overline{u}_{\eps,\sigma}$ rewrites $\overline{u}_{\eps,\sigma}(x)= s(x)+\sigma^{-1}\tau(x)$ and
\begin{equation}\label{w}
0<s(x)+\sigma^{-1}\tau(x)<1 \quad\text{for any }x\in\Pi^-.
\end{equation}
On the other hand, in view of \eqref{bch-def-supersol} and by definition of $L_{\eps}$, we can rewrite $L_{\eps}\overline{u}_{\eps,\sigma}(x)$ as
\begin{align*}
L_{\eps}\overline{u}_{\eps,\sigma}(x)&= L_{\eps}\overline{u}_{\eps,\sigma}(\p{R}(x)) + \int_{\R^N\setminus K_\eps}[J_\eps(x-y)-J_\eps(\p{R}(x)-y)](\overline{u}_{\eps,\sigma}(y)-\overline{u}_{\eps,\sigma}(x))\mathrm{d}y \\
&\qquad- \frac{\tau(x)}{\sigma}\int_{\R^N\setminus K_\eps}J_\eps(\p{R}(x)-y)\mathrm{d}y.
\end{align*}
Since  $\p{R}(x)\in\overline{B_R\setminus K_\eps}$, since $J\in W^{1,1}(\R^N)$ and since $J\ge 0$ a.e. in $\R^N$, by \eqref{bch-eq-ss1} we obtain
\begin{align*}
L_\eps \overline{u}_{\eps,\sigma}(x)&\le -\frac{\tau(x)}{\sigma}\j_{\eps}(\p{R}(x))-\tilde f_\eps(s(x))+ \int_{\R^N\setminus K_\eps}\left|J_\eps(x-y)-J_\eps(\p{R}(x)-y)\right|\mathrm{d}y.\\
&\le -\frac{\tau(x)}{\sigma}\j_{\eps}(\p{R}(x))-\tilde f_\eps(s(x))+ \int_{\R^N}\left|J_\eps(x-y)-J_\eps(\p{R}(x)-y)\right|\mathrm{d}y.\\
&\le -\frac{\tau(x)}{\sigma}\j_{\eps}(\p{R}(x))-\tilde f_\eps(s(x))+ \frac{\tau(x)}{\eps}\int_{\R^N}|\nabla J(z)|\hspace{0.05em}\mathrm{d}z.
\end{align*}
Therefore, we get
\begin{align*}
L_\eps \overline{u}_{\eps,\sigma}(x)+\tilde f_{\eps}(s(x)+\sigma^{-1}\tau(x))&\le \left(\tilde f_\eps(s(x)+\sigma^{-1}\tau(x)) -\tilde f_\eps(s(x))\right) \\
&\quad -\frac{\tau(x)}{\sigma}\j_{\eps}(\p{R}(x))+\frac{\tau(x)}{\eps}\int_{\R^N}|\nabla J(z)|\hspace{0.05em}\mathrm{d}z.
\end{align*}
By adding and subtracting $s(x)\j_{\eps}(\p{R}(x))$ on the right hand side of the above inequality and recalling \eqref{BC:def:de:gR}, we obtain
$$
L_\eps \overline{u}_{\eps,\sigma}(x)+\tilde f_{\eps}(\overline{u}_{\eps,\sigma}(x))\le \left(g_R\left(x,s(x)\right)-g_R\left(x,s(x)+\sigma^{-1}\tau(x)\right)\right)+ \gamma\hspace{0.1em}\sigma_\eps^{-1}\tau(x).
$$
where we have used~\eqref{defdelta0}. By~\eqref{eq:gamma}, \eqref{w} and the mean value theorem, we deduce that there exists some
$$ \xi\in\left[s(x),s(x)+\sigma^{-1}\tau(x)\right]\subset[0,1], $$
such that
$$ g_R\left(x,s(x)\right)-g_R\left(x,s(x)+\sigma^{-1}\tau(x)\right)=-\partial_s g_R(x,\xi)\hspace{0.1em}\sigma^{-1}\tau(x)\leq-\gamma\hspace{0.1em}\sigma^{-1}\tau(x). $$
Therefore, for every $0<\sigma<\sigma_\eps$, we obtain that
\begin{align*}
L_\eps \overline{u}_{\eps,\sigma}(x)+\tilde f_{\eps}(\overline{u}_{\eps,\sigma}(x))&\le \gamma\hspace{0.05em}\tau(x)\left(\frac{1}{\sigma_\eps}-\frac{1}{\sigma}\right)<0 \quad\text{for any }x\in\Pi^-.
\end{align*}
The proof of Claim~\ref{BC:claim:fin} is thereby complete.
\end{proof}
\begin{rem}\label{RK:geo3}
An analogue version of Lemma \ref{bch-lem-supersol} holds when $J_\eps(x-y)$ is replaced by $\widetilde{J}_\eps(d_g(x,y))$ where $\widetilde{J}_\eps$ is a locally integrable function such that $\widetilde{J}_\eps(|z|)=J_\eps(z)$ and $d_{\mathrm{g}}(x,y)$ is the geodesic distance on $\overline{\R^N\setminus K_\eps}$.
Indeed, the only places where the structure of the radial kernel $J_\eps$ came into place is when we used the Poincar\'e-type inequality \cite[Theorem 1.1]{Ponce2004} in Proposition \ref{Poincaree}, when we asserted that the solutions to \eqref{NEWktr} satisfying $\max_{[0,1]}f_\eps'<\inf_{B_R\setminus K_\eps}\mathcal{J}_\eps$ are continuous and when we made our extension procedure. But the Poincar\'e inequality was only needed in the ball $B_{R_0}$ and, by convexity, it trivially holds that $J_\eps(x-y)=\widetilde{J}_\eps(d_{\mathrm{g}}(x,y))$ for any $(x,y)\in B_{R_0}\times B_{R_0}$. Similarly, the extension procedure required only to evaluate the new function on the annulus $B_{R+\sigma}\setminus B_R$ but, since $R-R_1>0$ is large and $\eps$ is small, it still holds that $J_\eps(x-y)=\widetilde{J}_\eps(d_{\mathrm{g}}(x,y))$ for any $x \in B_{R+\sigma}\setminus B_R $ and any $y\in \R^N\setminus K_\eps$. Moreover, as already noted in Remark \ref{rem-geod}, condition \eqref{LE:geo:cont} still implies the continuity of solutions to the corresponding auxiliary problem:
$$\int_{B_R\setminus K_\eps}\widetilde{J}_\eps(d_{\mathrm{g}}(x,y))(v_{\eps,R}(y)-v_{\eps,R}(x))\mathrm{d}y -\widetilde{c}_{\eps}(x)\hspace{0.05em}v_{\eps,R}+g_\eps(v_{\eps,R}(x))=0 \quad \text{for }x\in \overline{B_R\setminus K_\eps},$$
where, by analogy, we have set
$$\widetilde{c}_{\eps}(x):=\int_{\R^N\setminus B_R} \widetilde{J}_\eps(d_{\mathrm{g}}(x,y))\hspace{0.05em}\mathrm{d}y.$$
In fact, the only place where some care should be taken is when justifying that if
\begin{align}
\int_{B_R\setminus K_\eps}\widetilde{J}_\eps(d_{\mathrm{g}}(\bar{x},y))(v_{\eps,R}(y)-v_{\eps,R}(\bar{x}))\mathrm{d}y=0, \label{geo:iter}
\end{align}
where $\bar{x}\in\overline{B_R\setminus K_\eps}$ is a point at which $v_{\eps,R}$ reaches an extremum, then it holds that $v_{\eps,R}(y)\equiv v_{\eps,R}(\bar{x})$ for any $y\in\overline{B_R\setminus K_\eps}$ (which is needed to establish the analogue of Proposition~\ref{nmeClm}). But, fortunately, the geometry of $K_\eps$ is simple enough to ensure that this is still the case. Indeed, \eqref{geo:iter} implies that $v_{\eps,R}(y)\equiv v_{\eps,R}(\bar{x})$ for any $y\in\Pi_1(\bar{x}):=\{z\in\overline{B_R\setminus K_\eps}; d_\mathrm{g}(\bar{x},z)<\eps/2\}$. By iteration, one finds that $v_{\eps,R}(y)\equiv v_{\eps,R}(\bar{x})$ for any $y\in\Pi_j(\bar{x})$ and any $j\geq1$, where $\Pi_j(\bar{x})$ is given by
$$ \Pi_{j+1}(\bar{x}):=\bigcup_{y\in\Pi_j(\bar{x})}\left\{z\in\overline{B_R\setminus K_\eps};~d_{\mathrm{g}}(y,z)< \eps/2\right\}, \hbox{ for any }j\geq1. $$
Then, one can show that, for some $j_0\geq1$ (independent of $\bar{x}$), it holds that $B_{\eps/4}(\bar{x})\cap\overline{B_R\setminus K_\eps}\subset\Pi_{j_0}(\bar{x})$. Whence, iterating the same reasoning over again, one gets that $v_{\eps,R}(y)\equiv v_{\eps,R}(\bar{x})$ for any $y\in B_{k\eps/4}(\bar{x})\cap\overline{B_R\setminus K_\eps}$ and any $k\in\N$; which then gives the desired result.
\end{rem}
\section{Construction of continuous global solutions}\label{section-proof}

In this final section we construct a positive nonconstant solution to \eqref{eq-eps}. Our goal will be to find an ordered pair of global continuous sub- and super-solution. That is, given $0<\eps<\eps^*$ (where $\eps^*$ has the same meaning as in Lemma~\ref{bch-lem-supersol}), we aim to construct two functions, $\underline{u}_\eps$ and $\overline{u}_\eps$, such that
$$\left\{
\begin{array}{rl}
L_\eps \overline{u}_\eps+f_\eps(\overline{u}_\eps)\leq0 & \text{in }\R^N\setminus K_\eps, \vspace{3pt}\\
L_\eps\underline{u}_\eps+f_\eps(\underline{u}_\eps)\geq0 & \text{in }\R^N\setminus K_\eps, \vspace{3pt}\\
0\leq \underline{u}_\eps\le \overline{u}_\eps\leq 1 & \text{in }\R^N\setminus K_\eps,
\end{array}
\right.$$
(where $L_\eps$ is as in \eqref{Loperateur:Leps}) and which further satisfy
\begin{align}
\lim_{x_1\to+\infty}\,\underline{u}_\eps(x)=1 \quad \text{and} \quad \lim_{|x|\to+\infty}\,\overline{u}_\eps(x)=1. \label{limitesSS}
\end{align}
Here, $x_1=x\cdot e_1$ where $e_1:=(1,0,\cdots,0)\in\mathbb{S}^{N-1}$. Then, by Lemmata \ref{Lem-iter} and \ref{LEMMA:INT} we automatically obtain the existence of a continuous solution $u_\eps$ to
\begin{align}
L_\eps u_\eps+f_\eps(u_\eps)=0 \quad\hbox{in }\R^N\setminus K_\eps, \label{BC:EQUATION}
\end{align}
satisfying $0\leq \underline{u}_\eps\leq u_\eps\leq \overline{u}_\eps\leq 1$. This, together with~\eqref{limitesSS}, yields a continuous solution to \eqref{BC:EQUATION} satisfying $0<u_\eps<1$ and $u_\eps(x)\to1$ as $x_1\to\infty$. In particular, we have $\sup_{x\in\R^N\setminus K_\eps}\,u_\eps(x)=1$. Since~\eqref{C01},~\eqref{C02} are satisfied, $u_\eps$ is continuous, $J_\eps$ is compactly supported and $J_\eps\in L^2(\R^N)$ (by \eqref{J-assum}), we may apply Lemma \ref{LEMMA:Behave} and we obtain that $\lim_{|x|\to+\infty}\,u_\eps(x)=1$, which proves that $u_\eps$ satisfies the requirements of Theorem~\ref{TH:eps} and thus Theorem \ref{TH:COUNTEREXAMPLE} is proved.

Therefore, to complete the proof of Theorem~\ref{TH:eps}, we need only to prove the following lemma.

\begin{lemma}\label{BC:soussupersol}
Let $(J,f)$ be a pair satisfying \eqref{C02} and \eqref{J-assum}. Let $(K_\eps)_{0<\eps<1}$ be the family of obstacles associated to the pair $(J,f)$ (as defined in Section~\ref{section-data}). Let $(J_\eps,f_\eps)$ be as in \eqref{Donnees:fJ} and let $\eps^*>0$ be as in Lemma~\ref{bch-lem-supersol}. Then, there exists $r_0>0$ such that, for all $0<\eps<\eps^*$, there is
\begin{enumerate}
\item[(i)] a continuous global sub-solution $\underline{u}_\eps$ to~\eqref{BC:EQUATION}  satisfying $\underline{u}_\eps\equiv0$ in $\{x_1\leq r_0\}$ and $\underline{u}_\eps(x)\to 1$ as $x_1\to\infty$,
\item[(ii)] a continuous global nonconstant super-solution $\overline{u}_\eps$ to~\eqref{BC:EQUATION}  satisfying $\overline{u}_\eps\equiv 1$ in $\R^N\setminus B_{r_0}$ and $0<\overline{u}_\eps\leq 1$.
\end{enumerate}
In particular, $0\leq\underline{u}_\eps< \overline{u}_\eps\leq1$.
\end{lemma}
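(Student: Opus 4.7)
The super-solution is essentially handed to us by Lemma~\ref{bch-lem-supersol}. Choose $r_0:=\max\{R^*,R_1+\eps^*\}$ and take $R=r_0$ in that lemma; this produces a continuous, positive, nonconstant $\overline{u}_\eps$ satisfying $L_\eps\overline{u}_\eps+f_\eps(\overline{u}_\eps)\le 0$ in $\R^N\setminus K_\eps$ together with $\overline{u}_\eps\equiv 1$ outside $B_{r_0}$, which is exactly (ii). The choice $r_0>R_1+\eps^*$ guarantees in addition that $K_\eps\subset B_{R_1}\subset\{x_1<r_0\}$ and that $K_\eps$ stays a uniform distance (larger than $\eps/2$) from the half-space $\{x_1>r_0\}$ in which the sub-solution will live.

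For the sub-solution the plan is to build a one-dimensional ``pinned'' profile and extend it trivially in the transverse directions. Let
$$\bar J_\eps(t):=\int_{\R^{N-1}}J_\eps(t,z')\,\mathrm{d}z'$$
denote the one-dimensional marginal of $J_\eps$; it is non-negative, symmetric, of unit mass and supported in $[-\eps/2,\eps/2]$. I would produce $\Psi_\eps:\R\to[0,1)$ continuous, with $\Psi_\eps\equiv 0$ on $(-\infty,r_0]$, $\Psi_\eps(x)\to1$ as $x\to+\infty$, and
$$\int_\R\bar J_\eps(x-y)\big(\Psi_\eps(y)-\Psi_\eps(x)\big)\mathrm{d}y+f_\eps(\Psi_\eps(x))=0\quad\text{for }x>r_0,$$
and then set $\underline{u}_\eps(x):=\Psi_\eps(x_1)$. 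The construction of $\Psi_\eps$ follows the same blueprint as Sections~\ref{SE:AUXPROBLEM:BC}--\ref{section-supersol} but in dimension one: on a large interval $(r_0,L)$ one solves a Dirichlet-like auxiliary problem
$$\int_\R\bar J_\eps(x-y)(\Psi(y)-\Psi(x))\mathrm{d}y+c_{\eps,L}(x)(1-\Psi(x))+\widetilde f_\eps(\Psi(x))=0,\qquad x\in(r_0,L),$$
with $\Psi\equiv 0$ on $(-\infty,r_0]$ and $c_{\eps,L}(x):=\int_L^{+\infty}\bar J_\eps(x-y)\,\mathrm{d}y$, by minimising the energy \eqref{def-enerOmega} (with $\Omega=(r_0,L)$) near the reference $w_1:=\mathds{1}_{(r_0,L)}$. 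The Poincaré-Bourgain-Brezis-Mironescu inequality of \cite{Ponce2004} that drives Proposition~\ref{Poincare} applies verbatim on the bounded interval $(r_0,L)$, and the rest of Propositions~\ref{Poincare}--\ref{nmeClm} transfers directly to this one-dimensional setting to yield a continuous $\Psi_{\eps,L}$ with $0<\Psi_{\eps,L}<1$ on $(r_0,L)$, equal to $0$ on the left and forced up to $1$ near $L$ by the penalty $c_{\eps,L}$. Letting $L\to+\infty$ through a monotone, $L^\infty$-bounded family, a pointwise limit $\Psi_\eps$ satisfying the full-line 1D equation on $(r_0,\infty)$ is obtained; that $\Psi_\eps(x)\to1$ at $+\infty$ comes from the fact that $1$ is the only stable constant lying above $\theta$ for the bistable nonlinearity together with the coercivity estimate inherited from Proposition~\ref{Poincare}.

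With $\Psi_\eps$ in hand the remaining verifications are short. For $x$ with $x_1\le r_0$, $\underline{u}_\eps(x)=0$ and $L_\eps\underline{u}_\eps(x)=\int_{\R^N\setminus K_\eps}J_\eps(x-y)\Psi_\eps(y_1)\,\mathrm{d}y\ge 0=-f_\eps(0)$, so the sub-solution inequality is trivial. For $x_1>r_0$, since $r_0>R_1+\eps^*$ forces $\Psi_\eps(y_1)=0$ on $K_\eps$, Fubini gives
\begin{align*}
L_\eps\underline{u}_\eps(x)&=\int_\R\bar J_\eps(x_1-y_1)(\Psi_\eps(y_1)-\Psi_\eps(x_1))\mathrm{d}y_1+\Psi_\eps(x_1)\int_{K_\eps}J_\eps(x-y)\mathrm{d}y\\
&=-f_\eps(\Psi_\eps(x_1))+\Psi_\eps(x_1)\int_{K_\eps}J_\eps(x-y)\mathrm{d}y\ge -f_\eps(\underline u_\eps(x)),
\end{align*}
so $\underline u_\eps$ is a sub-solution globally. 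The ordering $\underline{u}_\eps<\overline{u}_\eps$ is automatic: on $\{x_1\le r_0\}$ one has $\underline{u}_\eps=0<\overline{u}_\eps$, while if $x_1>r_0$ then $|x|\ge x_1>r_0$, so $x\notin B_{r_0}$ and $\overline{u}_\eps(x)=1>\Psi_\eps(x_1)=\underline{u}_\eps(x)$. The main obstacle in this proof is exactly the construction of $\Psi_\eps$: one must carry the variational machinery of Section~\ref{section-supersol} over to a left half-line ``obstacle'' and then control the limit $L\to+\infty$ in such a way that the profile actually saturates at $1$, rather than stabilising at some intermediate value. This is where the assumption $\int_0^1 f>0$ (encoded in the choice of $R_0^*$ in \eqref{R0}) plays an essential role, just as in Proposition~\ref{Poincare}.
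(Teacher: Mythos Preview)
Your super-solution is fine and matches the paper. The gap is in the sub-solution.

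The claim that Propositions~\ref{Poincare}--\ref{nmeClm} ``transfer directly'' to a one-dimensional half-line problem is where things break. Two concrete obstructions. First, the energy \eqref{def-enerOmega} with $\Omega=(r_0,L)$ does not see the left constraint $\Psi\equiv0$ on $(-\infty,r_0]$, so its critical points are not solutions of the auxiliary equation you wrote (which integrates over all of $\R$). If you repair this by adding the left boundary penalty, the energy of your reference $w_1=\mathds{1}_{(r_0,L)}$ picks up a term $\tfrac12\int_{r_0}^{r_0+\eps/2}\int_{-\infty}^{r_0}\bar J_\eps(x-y)\,\mathrm{d}y\,\mathrm{d}x$ of order $\eps$, whereas in Proposition~\ref{Poincaree} the corresponding leak through the thin channel $F_\eps$ was $O(\eps^{N+1})$; since the Poincar\'e gain in Proposition~\ref{Poincare} is only $O(\eps^2)$, the key inequality \eqref{2emeEST} fails for small $\eps$. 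Second, even granting a solution $\Psi_{\eps,L}$ of the auxiliary problem with $\widetilde f_\eps$, the passage $L\to\infty$, the monotonicity in $L$, and the upgrade to an exact solution with $f_\eps$ (not $\widetilde f_\eps$) that actually saturates at $1$ are all unproven; note that near $r_0$ your profile must take values below $\theta$, where $\widetilde f\neq f$.

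The paper bypasses all of this by invoking the classical existence of nonlocal bistable travelling waves \cite{Bates,Chen,Coville,Yagisita}. One lowers $f$ to a $C^1$ bistable $f_\delta\le f$ with zeros at $-\delta,\theta,1$ and $\int_{-\delta}^1 f_\delta>0$; the planar wave $\varphi_{\eps,\delta}(x)=\phi_{\eps,\delta}(x_1)$ is increasing from $-\delta$ to $1$ with positive speed $c_{\eps,\delta}$, hence
\[
L_{\R^N}\varphi_{\eps,\delta}+f_\eps(\varphi_{\eps,\delta})\ \ge\ L_{\R^N}\varphi_{\eps,\delta}+f_{\eps,\delta}(\varphi_{\eps,\delta})=c_{\eps,\delta}\,\phi_{\eps,\delta}'\ \ge\ 0\quad\text{in }\R^N.
\]
Normalising $\phi_{\eps,\delta}(0)=0$, the truncation $\underline u_\eps:=\max\{0,\varphi_{\eps,\delta}(\cdot-r_0 e_1)\}$ is continuous, vanishes on $\{x_1\le r_0\}$, tends to $1$ as $x_1\to\infty$, and is a global sub-solution once $r_0$ is large enough that $\mathrm{supp}\,J_\eps(x-\cdot)\subset\R^N\setminus K_\eps$ for every $x$ with $x_1\ge r_0$ --- verified exactly by the two cases you already wrote out. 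This replaces your variational construction by a two-line citation.
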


\begin{proof}
By Lemma~\ref{bch-lem-supersol}, we know that there exists some $R^*>0$ and some $0<\eps^*<1$ such that, for all $0<\eps<\eps^*$, there is a nonconstant super-solution $\overline{u}_\eps\in C(\overline{\R^N\setminus K_\eps})$ to \eqref{BC:EQUATION} that satisfies $\overline{u}_\eps\equiv 1$ in $\R^N\setminus B_{R^*}$. So, we are left to prove that there exists a sub-solution $\underline{u}_\eps$ to \eqref{BC:EQUATION} satisfying  (i) and such that $\underline{u}_\eps\le \overline{u}_\eps$.

To do so, let us extend $f$ outside $[0,1]$ by $f'(0)s$ when $s\ge 0$ and $f'(1)(s-1)$ for $s\ge 1$. For simplicity, we still denote by $f$ this extension. Now, we take $\delta\in(0,1)$ and we let $f_\delta$ be a $C^1$ function defined in $\R$ such that
$$\left\{\begin{array}{l}
f_\delta\leq f\text{ in }\R,\text{ and }f_\delta(s)=f(s)\text{ for }s\ge \theta, \vspace{3pt}\\
f_\delta\text{ has only one zero, }\theta_\delta=\theta,\text{ in }(-\delta,1), \vspace{3pt}\\
f_\delta\left(-\delta\right)=0,~~f_\delta\left(1\right)=0, \vspace{3pt}\\
f_\delta'(s)<1\text{  for any }s\in[-\delta,1]\text{ and }f_\delta'\left(-\delta\right),\,f_\delta'\left(1\right)<0, \vspace{3pt}\\
\ds\int_{-\delta}^{1}f_\delta(s)\hspace{0.05em}\mathrm{d}s>0.
\end{array}\right.$$
Since $f\in C^1(\R)$ satisfies \eqref{C02} such a function $f_\delta\in C^1\big(\R\big)$ always exists provided that $\delta$ is taken sufficiently small, say if $0<\delta<\delta_1$ for some small $\delta_1>0$.

Let $f_{\eps,\delta}(s):=\eps^2f_\delta(s)$ and let $L_{\R^N}$ be the operator given by
\begin{align}
L_{\R^N}u(x):=\int_{\R^N}J_\eps(x-y)(u(y)-u(x))\mathrm{d}y. \label{LRN}
\end{align}
Since $J_\eps$ is radially symmetric (because $J$ is), using the results obtained in \cite{Bates,Chen,Coville,Yagisita}, we know that, for any $0<\eps<1$, there exists an increasing function $\phi_{\eps,\delta}\in C^1(\R)$ and a number $c_{\eps,\delta}>0$ such that the function $\varphi_{\eps,\delta}(x):=\phi_{\eps,\delta}(x\cdot e_1) $ satisfies
\begin{align}
\left\{
\begin{array}{rl}
L_{\R^N}\varphi_{\eps,\delta}(x)+f_{\eps,\delta}(\varphi_{ \eps,\delta}(x))=c_{\eps,\delta}\phi_{\eps,\delta}'(x_1)\ge 0&\text{for all }x\in \R^N, \vspace{3pt}\\
\varphi_{\eps,\delta}(-\infty)=-\delta,\ \ \varphi_{\eps,\delta}(\infty)=1~\text{ and }~\varphi_{\eps,\delta}=0& \text{in }H_{e_1},
\end{array}
\right. \label{AUXILIARY2}
\end{align}
where $H_{e_1}$ is the hyperplane $H_{e_1}:=\{x_1=0\}$.
Now, for any $r_0>0$, we let $\varphi_{\eps,\delta,r_0}$ be the function defined by
$$ \varphi_{\eps,\delta,r_0}(x):=\varphi_{\eps,\delta}(x-r_0). $$
By construction, for every $r_0>0$, we have
\begin{align}
L_{\R^N}\varphi_{\eps,\delta,r_0}+f_\eps(\varphi_{\eps,\delta,r_0})\geq L_{\R^N}\varphi_{\eps,\delta,r_0}+f_{\eps,\delta}(\varphi_{\eps,\delta,r_0})\geq0 \quad{\mbox{in }}\R^N, \label{lrnphi}
\end{align}
Now, we set
$$ \underline{u}_\eps(x):=\max\big\{0,\varphi_{\eps,\delta,r_0}(x)\big\}\quad\text{and}\quad H_*:=\big\{x\in\R^N\,;\ x_1\geq r_0\big\}. $$
Note that, for all $0<\eps<\eps^*$, it holds that $K_\eps\subset\R^N\setminus H_*$ provided that $r_0$ is chosen sufficiently large. Let us now prove that, for $r_0$ large enough, $\underline{u}_\eps$ is a sub-solution to \eqref{BC:EQUATION}.

First, if $x\in \R^N\setminus(K\cup H_*)$, then $\underline{u}_{\eps}(x)=0$ and
\begin{align}
L_\eps\underline{u}_\eps(x)+f_\eps(\underline{u}_\eps(x))=\int_{\R^N\setminus K_\eps}J_\eps(x-y)\underline{u}_\eps(y)\hspace{0.05em}\mathrm{d}y\geq 0. \label{lrnphi2}
\end{align}
Next, if $x\in H_*$, then, since $J_\eps$ is compactly supported, we have
$$ \bigcup_{x\in H_*}\big( x+\mathrm{supp}(J_\eps) \big) \subset \R^N\setminus K_\eps, $$
provided that $r_0$ is chosen sufficiently large. From this and~\eqref{lrnphi}, we deduce that
$$\baa{rcl}
L_\eps\underline{u}_\eps(x)+f_\eps(\underline{u}_\eps(x)) & = & \ds\int_{\R^N\setminus K_\eps}J_\eps(x-y)(\underline{u}_\eps(y)-\varphi_{\eps,\delta,r_0}(x))\mathrm{d}y+f_\eps(\varphi_{\eps,\delta,r_0}(x))\vspace{3pt}\\
& \geq & \ds\int_{\R^N\setminus K_\eps}J_\eps(x-y)(\varphi_{\eps,\delta,r_0}(y)-\varphi_{\eps,\delta,r_0}(x))\mathrm{d}y +f_\eps(\varphi_{\eps,\delta,r_0}(x))\vspace{3pt}\\
& = & L_{\R^N}\varphi_{\eps,\delta,r_0}(x)+f_\eps(\varphi_{\eps,\delta,r_0}(x))\geq0.\eaa$$
Together with~\eqref{lrnphi2}, we obtain that $\underline{u}_\eps$ is a global sub-solution to~\eqref{BC:EQUATION} which, by~\eqref{AUXILIARY2}, satisfies $\underline{u}_\eps(x)\to1$ as $x_1\to\infty$ and $\underline{u}_\eps(x)=0$ if $x_1\leq r_0$. By increasing $r_0$ to $R^*$ (if necessary) we then achieve $\underline{u}_\eps < \overline{u}_\eps$ when $0<\eps<\eps^*$. The proof of Lemma~\ref{BC:soussupersol} is thereby complete.
\end{proof}
\begin{rem}
Observe that, on account of Remarks \ref{rem-geod}, \ref{RK:geo2} and \ref{RK:geo3}, the \emph{same} proof as above yields an analogous result with $L_{\mathrm{g}}$ in place of $L$. To see this, it suffices to notice that our arguments are essentially focused on what is happening \emph{far away} from $K$ and, since the kernel we consider is compactly supported, the operator $L_{\mathrm{g}}$ will then coincide with $L$ (possibly up to take $R$ sufficiently large). In like manner, as already mentioned in Remark \ref{rem-geod}, the fact that ``$\sup_{\R^N\setminus K_\eps}\,u=1$" implies that ``$\lim_{|x|\to\infty}\,u(x)=1$" still holds with $L_{\mathrm{g}}$ in place of $L$ since, here as well, the proof relies only on estimates of the behaviour of $u$ far away from $K_\eps$.
\end{rem}

\section*{Appendix}

In this appendix, we prove Lemma~\ref{Lem-iter}. Our strategy closely follows \cite{BCHV,CovDup07} and relies on the well-known monotone iterative method. Before doing so, we first state a preliminary lemma.
\begin{lemma}\label{LE:comparaison}
Let $K\subset\R^N$ be a compact set and assume that $J$ satisfies \eqref{C01}. Let $k>0$ and let $w\in C(\R^N\setminus K)$ be such that
\begin{align}
Lw-kw\geq0\quad\text{in }\R^N\setminus K, \label{Lemme:prem}
\end{align}
and that
\begin{align}
\limsup_{|x|\to\infty}\,w(x)\leq0.  \label{Lemme:deux}
\end{align}
Then,
$$ w\leq 0\quad\text{in }\R^N\setminus K. $$
\end{lemma}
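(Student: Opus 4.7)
The plan is to argue by contradiction, setting $M := \sup_{\R^N\setminus K} w$ and showing that the strict positivity of $k$ combined with the mean-value-like structure of $L$ forces $M \le 0$. The $\limsup$ hypothesis together with continuity of $w$ ensures $M$ is a meaningful real number in the contexts of interest (see the final paragraph for the boundedness issue). Suppose for contradiction that $M > 0$.

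First, I would rewrite the differential inequality $Lw(x)-kw(x)\ge 0$ as
\[
\bigl(\mathcal{J}(x) + k\bigr)\,w(x) \le \int_{\R^N\setminus K} J(x-y)\,w(y)\,\mathrm{d}y,
\]
where $\mathcal{J}(x) := \int_{\R^N\setminus K} J(x-y)\,\mathrm{d}y$. Since $J\ge 0$ and $w(y)\le M$ everywhere, the right-hand side is bounded above by $M\mathcal{J}(x)$, giving
\[
w(x) \le \frac{M\,\mathcal{J}(x)}{\mathcal{J}(x) + k} \qquad \text{for every } x\in \R^N\setminus K.
\]
Because $J$ has unit mass, $\mathcal{J}(x)\le 1$; and the map $t\mapsto t/(t+k)$ is increasing on $[0,\infty)$. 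Thus $w(x) \le M/(1+k)$ uniformly in $x$. Taking the supremum on the left yields $M \le M/(1+k)$, whence $kM \le 0$. Since $k>0$ this forces $M \le 0$, contradicting $M>0$. This is the entire algebraic heart of the argument.

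The main (and only) technical nuance I expect is verifying that $M$ is finite, because $w$ is only assumed continuous on the \emph{open} set $\R^N\setminus K$ and could a priori blow up near $\partial K$. In every application of this lemma within the paper---most notably in the monotone iteration of Lemma~\ref{Lem-iter}, where $w$ will be the difference of two functions with values in $[0,1]$---the function $w$ is bounded, so this causes no real difficulty. In general one can reduce to the bounded case by truncation: for each $\lambda>M^-:=\max\{0,\limsup_{|x|\to\infty} w(x)\}$, the sup of $w$ over the set $\{w\le \lambda\}\cap(\R^N\setminus K)$ behaves as above, and letting $\lambda\to\infty$ combined with the $\limsup$ condition (which localises where $w$ could exceed any threshold) rules out blow-up. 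Either way, the inequality $(\mathcal{J}(x)+k)w(x)\le M\mathcal{J}(x)$ is the pivotal estimate, and the argument is essentially a one-line weak maximum principle.
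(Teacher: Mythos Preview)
Your argument is correct and is in fact a bit slicker than the paper's. The paper also argues by contradiction with $M:=\sup_{\R^N\setminus K}w>0$, but instead of your uniform pointwise bound it picks a maximizing sequence $(x_j)$ which, by the $\limsup$ hypothesis, stays in a bounded region $B_r\setminus K$; after extracting a limit $\bar x$, it passes to the limit in the inequality $\int J(x_j-y)(w(y)-w(x_j))\,\mathrm{d}y\ge k\,w(x_j)$ via dominated convergence to obtain $0\ge kM>0$. Your route avoids the sequence extraction and the limit passage entirely: the single line $(\mathcal J(x)+k)w(x)\le M\mathcal J(x)\le M$ already gives $w(x)\le M/(1+k)$ uniformly, hence $M\le M/(1+k)$, which is the contradiction. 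This is more robust (no need for convergence of $J(x_j-\cdot)$ in $L^1$, no issue if $\bar x\in\partial K$) and makes the role of the strict positivity of $k$ completely transparent.

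On the boundedness issue you raise: the paper's proof has exactly the same implicit assumption, since both the dominated convergence step and the very meaning of ``$\sup w$'' require $w^+$ to be finite. Your observation that every use of the lemma in the paper involves bounded $w$ (differences of functions with values in $[0,1]$) is accurate, so this is not a gap in either argument for the purposes at hand. Your truncation sketch for the general case is a little vague as written, but it is not needed here.
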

\begin{proof}
Suppose, by contradiction, that $\sup_{\R^N\setminus K}w>0$. Then, by assumption \eqref{Lemme:deux}, there exists a number $r>0$ with $K\subset B_r$ and a sequence $(x_j)_{j\geq0}\subset B_r\setminus K$ such that
\begin{align}
\lim_{j\geq0}w(x_j)=\sup_{B_r\setminus K}w=\sup_{\R^N\setminus K} w>0. \label{Lem:Ap:limittt}
\end{align}
Since $(x_j)_{j\geq0}$ is bounded, up to extraction of a subsequence, there exists a point $\bar{x}\in\overline{B_r\setminus K}$ such that $x_j\to\bar{x}$ as $j\to\infty$. Moreover, since $w$ is continuous and \eqref{Lemme:prem} is satisfied everywhere in $\R^N\setminus K$, it makes sense to evaluate \eqref{Lemme:prem} at $x_j$ for any $j\geq0$. That is, we have
$$ \int_{\R^N\setminus K}J(x_j-y)(w(y)-w(x_j))\mathrm{d}y\geq k\hspace{0.05em}w(x_j)\quad\text{for any }j\geq0. $$
But, since $k>0$, using \eqref{Lem:Ap:limittt} and the dominated convergence theorem, we obtain
\begin{align*}
0&\geq \int_{\R^N\setminus K}J(\bar{x}-y)\left(w(y)-\sup_{\R^N\setminus K}w\right)\mathrm{d}y\geq k\sup_{\R^N\setminus K} w>0,
\end{align*}
which is a contradiction. The proof is thereby complete.
\end{proof}
We are now in position to prove Lemma~\ref{Lem-iter}.
\begin{proof}[Proof of Lemma~\ref{Lem-iter}]
Let us first observe that, from the assumptions made on $J$, the operator $L$ is linear and continuous on $(C_0(\R^N\setminus K),\left\|\cdot\right\|_\infty)$, where
$$ C_0(\R^N\setminus K):=\left\{w\in C(\R^N\setminus K); \lim_{|x|\to\infty}w(x)=0\right\}. $$
Indeed, this is because, given any $w\in C_0(\R^N\setminus K)$, we have
$$ Lw(x)=\int_{\R^N}J(y)\left(\mathds{1}_{x-\R^N\setminus K}(x)\hspace{0.05em}w(x-y)\right)\mathrm{d}y-\mathcal{J}(x)w(x), $$
where $\mathcal{J}$ is as in \eqref{defJ(x)}, and, by the dominated convergence theorem, we have that $Lw(x)\to0$ as $|x|\to\infty$. The continuity of $Lw$ is a mere consequence of the continuity of translations in $L^1(\R^N)$ and of the continuity of $w$, as is easily seen from the (trivial) inequality
\begin{align}
|Lw(x_1)-Lw(x_2)|\leq 2\|w\|_\infty\int_{\R^N}|J(y+x_1-x_2)-J(y)|\mathrm{d}y+|w(x_1)-w(x_2)|, \label{BC:continu:Lw}
\end{align}
which holds for any $x_1,x_2\in\R^N\setminus K$. So that $L$ indeed maps $C_0(\R^N\setminus K)$ into itself. Moreover, the continuity of the operator $L$ follows from the fact that
$$ \|Lw\|_\infty\leq 2\|w\|_\infty \quad\text{for any }w\in C_0(\R^N\setminus K). $$

Next, we let $k>0$ be a number large enough so that the map $s\mapsto -ks-f(s)$ is decreasing in $[0,1]$ and that $k\in\rho(L)$ where $\rho(L)$ denotes the resolvent of the operator $L$.

Let $\underline{u}$ and $\overline{u}$ be continuous global sub- and super-solutions to
\begin{align}
Lu+f(u)=0\quad\text{in }\R^N\setminus K, \label{LEQUATION:pb:iter:mon}
\end{align}
satisfying \eqref{SousSur:iter1} and \eqref{SousSur:iter2}.

We will construct a solution $u$ to \eqref{LEQUATION:pb:iter:mon} satisfying $\underline{u}\leq u\leq\overline{u}$ using a monotone iterative scheme. That is, we will construct $u$ as the limit of an appropriate sequence of functions. The main tool behind our construction is the comparison principle Lemma~\ref{LE:comparaison}. To this end, we have to make sure that the sequence we construct has the right asymptotic behavior as $|x|\to\infty$ (as required by Lemma~\ref{LE:comparaison}). With this aim in mind, we first construct an appropriate sequence of auxiliary functions. Namely, we define $v_0\equiv 0$ and, for $j\geq0$, we let
\begin{align}
L v_{j+1}(x)-k\hspace{0.05em}v_{j+1}(x)=-k\hspace{0.05em}v_{j}(x)-f(\overline{u}(x)+v_j(x))-L\overline{u}(x)\quad\text{for }x\in\R^N\setminus K. \label{suite:auxiliaire:schema:iter}
\end{align}
Let us check that the $v_j$'s are well-defined elements of $C_0(\R^N\setminus K)$. Since
$k\in \rho(L)$ and $0\equiv v_0\in C_0(\R^N\setminus K)$, $v_1$ is a well-defined element of $C_0(\R^N\setminus K)$ as soon as $$ f(\overline{u}(\cdot))+L\overline{u}(\cdot) \in C_0(\R^N\setminus K),$$
which is the case since  $f(1)=0$, $f$ is continuous, $\overline{u}(x)\to1$ as $|x|\to\infty$ and $L\overline{u}\in C_0(\R^N\setminus K)$ (because $\overline{u}\in C(\R^N\setminus K)$) and
$$ L\overline{u}(x)=\int_{\R^N}J(y)\hspace{0.05em}\mathds{1}_{x-\R^N\setminus K}(y)(\overline{u}(x-y)-\overline{u}(x))\mathrm{d}y\underset{|x|\to\infty}{\longrightarrow}\int_{\R^N}J(y)(1-1)\mathrm{d}y=0.$$
Similarly, if, for some $j\geq0$, it holds that $v_j\in C_0(\R^N\setminus K)$, then, given that $k\in\rho(L)$ and that $L\overline{u}\in C_0(\R^N\setminus K)$, $v_{j+1}$ is a well-defined element of $C_0(\R^N\setminus K)$ as soon as
$$ f(\overline{u}(\cdot)+v_j(\cdot)) \in C_0(\R^N\setminus K),$$
which trivially holds  since  $f$ is continuous, $f(1)=0$ and $\overline{u}(x)\to1, v_j(x)\to 0$ as $|x|\to\infty$. Whence, by induction, we infer that the $v_j$'s are, indeed, well-defined elements of $C_0(\R^N\setminus K)$.
Let us now define a sequence $(u_j)_{j\geq0}\subset C(\R^N\setminus K)$ by setting $u_j:=\overline{u}+v_j$. Then, by construction, we have
\begin{align}
Lu_{j+1}(x)-k\hspace{0.05em}u_{j+1}(x)=-k\hspace{0.05em}u_j(x)-f(u_j(x)) \quad\text{for any }x\in\R^N\setminus K\text{ and }j\geq0, \label{BC:eq:generale:iter}
\end{align}
and the $u_j$'s satisfy the limit condition
\begin{align}
\lim_{|x|\to\infty}u_j(x)=1 \quad\text{for any }j\geq0. \label{App:comportement:asympt}
\end{align}
We will show that the desired solution to \eqref{LEQUATION:pb:iter:mon} can be obtained as the pointwise limit of $(u_j)_{j\geq0}$. Let us proceed step by step. First, when $j=0$, we have
\begin{align}
Lu_1(x)-k\hspace{0.05em}u_1(x)=-k\hspace{0.05em}u_0(x)-f(u_0(x)) \quad\text{for }x\in\R^N\setminus K. \label{iter:u1u0}
\end{align}
We claim that $\underline{u}\leq u_1\leq u_0=\overline{u}$ in $\R^N\setminus K$. Indeed, we have
\be\left\{\begin{array}{rcl}
L(u_1-u_0)(x)-k(u_1-u_0)\!\! & \!\!=\!\! &\!\! -Lu_0(x)-f(u_0(x)), \vspace{3pt}\\
L(u_{1}-\underline{u})(x)-k(u_{1}-\underline{u})\!\! &  \!\!\leq\!\! &\!\! f(\underline{u}(x))+k\hspace{0.05em}\underline{u}(x)-f(u_0(x))-k\hspace{0.05em}u_0(x).\end{array}\right.  \nonumber
\ee
Since $u_0=\overline{u}$ is a super-solution to \eqref{LEQUATION:pb:iter:mon}, $\underline{u}\leq \overline{u}$ and $s\mapsto-k\hspace{0.05em}s-f(s)$ is decreasing, we obtain that
\begin{align}
\left\{
\begin{array}{rl}
L(u_1-u_0)(x)-k(u_1-u_0)\geq 0,& \vspace{3pt} \\
L(u_{1}-\underline{u})(x)-k(u_{1}-\underline{u})\leq 0.&
\end{array}
\right.\label{sys:iter:u1u0}
\end{align}
By construction of $u_1$ (remember \eqref{SousSur:iter1} and \eqref{App:comportement:asympt}), we have
\begin{align}
\lim_{|x|\to\infty}(u_1-u_0)(x)=0~~\text{and}~~\liminf_{|x|\to\infty}\,(u_{1}-\underline{u})(x)\geq0. \label{LaLimiteU01}
\end{align}
This, together with Lemma~\ref{LE:comparaison}, then gives that $\underline{u}\leq u_1\leq u_0=\overline{u}$ in $\overline{\R^N\setminus K}$. Similarly, by \eqref{BC:eq:generale:iter}, the function $u_2\in C(\R^N\setminus K)$ solves \eqref{iter:u1u0} with $u_2$ in place of $u_1$ and $u_1$ in place of $u_0$. Thus, from \eqref{SousSur:iter1}, \eqref{App:comportement:asympt} and the monotonicity of $s\mapsto-k\hspace{0.05em}s-f(s)$, we deduce that \eqref{sys:iter:u1u0} and \eqref{LaLimiteU01} still hold with $u_2$ instead of $u_1$ and $u_1$ instead of $u_0$. We may then apply the comparison principle Lemma~\ref{LE:comparaison} and we deduce that $\underline{u}\leq u_2\leq u_1\leq u_0=\overline{u}$ in $\R^N\setminus K$. By induction, we infer that the $u_j$'s satisfy the monotonicity relation
$$ \underline{u}\leq \cdots\leq u_{j+1}\leq u_j\leq \cdots\leq u_2\leq u_1\leq u_0=\overline{u}. $$
Since $(u_j)_{j\geq0}$ is non-increasing and bounded from below by $\underline{u}$, the function
\begin{align}
u(x):=\lim_{j\to\infty}u_j(x)\in\left[\underline{u}(x),\overline{u}(x)\right], \label{definition:de:la:sol}
\end{align}
is well-defined for any $x\in\R^N\setminus K$. In particular, since $0\leq\underline{u}\leq\overline{u}\leq1$, it follows from \eqref{definition:de:la:sol} that $u\in L^\infty(\R^N\setminus K)$. It remains only to check that the function $u$ is a solution to \eqref{LEQUATION:pb:iter:mon}. For it, it suffices to let $j\to\infty$ in \eqref{BC:eq:generale:iter} (using the dominated convergence theorem), which then gives
$$ Lu(x)+f(u(x))=0 \quad\text{for any }x\in\R^N\setminus K. $$
The proof is thereby complete.
\end{proof}
\begin{rem}\label{geod-montone-iterative}
The same arguments also apply when the operator $L$ is replaced by $L_{\mathrm{g}}$ provided that $J=\widetilde{J}(\left|\cdot\right|)$ satisfies \eqref{J-assum}, since it still holds that if $w(x)\to \ell\in\R$ as $|x|\to\infty$, then $L_{\mathrm{g}}w(x)\to 0$ as $|x|\to\infty$. Moreover, the continuity of $w$ still implies the continuity of $L_{\mathrm{g}}w$ but the proof is less obvious since one can no longer rely on the continuity of translations in $L^1(\R^N)$. For the sake of completeness, we state a last lemma below which justifies why this is true.
\end{rem}

\begin{lemma}\label{Lgestcontinue}
Let $K\subset\R^N$ be a compact set and assume that $\widetilde{J}$ satisfies \eqref{J:geo} and that $\widetilde{J}$ is supported in $[0,r]$ for some $r>0$. Let $w\in C(\R^N\setminus K)$. Then, $L_{\mathrm{g}}w\in C(\R^N\setminus K)$.
\end{lemma}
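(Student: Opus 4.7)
The plan is to prove continuity of $L_\mathrm{g}w$ at an arbitrary point $x_0 \in \R^N \setminus K$ via Vitali's convergence theorem, exploiting the local Lipschitz behaviour of the geodesic distance together with the compact support hypothesis on $\widetilde{J}$.

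First I would establish uniform Lipschitz continuity of $d_\mathrm{g}(\cdot, y)$ near $x_0$, uniformly in $y$. Choosing $\rho > 0$ with $B_\rho(x_0) \subset \R^N \setminus K$, the Euclidean segment $[x_0, x]$ is admissible for every $x \in B_\rho(x_0)$, whence $d_\mathrm{g}(x, x_0) \leq |x - x_0|$; the triangle inequality for $d_\mathrm{g}$ then yields
$$|d_\mathrm{g}(x, y) - d_\mathrm{g}(x_0, y)| \leq |x - x_0|$$
for every $y \in \R^N \setminus K$.

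Next I would decompose $L_\mathrm{g}w(x) = F(x) - w(x)\hspace{0.05em} G(x)$, where
$$F(x) := \int_{\R^N\setminus K} \widetilde{J}(d_\mathrm{g}(x,y))\hspace{0.05em} w(y)\hspace{0.05em} \mathrm{d}y \ \text{ and }\ G(x) := \int_{\R^N\setminus K} \widetilde{J}(d_\mathrm{g}(x,y))\hspace{0.05em} \mathrm{d}y.$$
Since $w$ is continuous at $x_0$, the continuity of $L_\mathrm{g}w$ at $x_0$ reduces to that of $F$ and $G$ — both of the form $x \mapsto \int \widetilde{J}(d_\mathrm{g}(x, y))\hspace{0.05em}\phi(y)\hspace{0.05em}\mathrm{d}y$ for some $\phi \in L^\infty$. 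Because $\widetilde{J}$ vanishes on $(r, \infty)$ and $d_\mathrm{g}(x, y) \geq |x - y|$, each integrand is supported in $B_r(x) \cap (\R^N\setminus K) \subset B_{r+\rho}(x_0)$, a fixed bounded set, which secures the tightness hypothesis in Vitali's theorem.

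Along any sequence $x_n \to x_0$ in $B_\rho(x_0)$, pointwise convergence $\widetilde{J}(d_\mathrm{g}(x_n, y)) \to \widetilde{J}(d_\mathrm{g}(x_0, y))$ for a.e.~$y$ is immediate from the first step together with continuity of $\widetilde{J}$ on $(0, \infty)$, which in turn follows from $J = \widetilde{J}(|\cdot|) \in W^{1,1}(\R^N)$ in \eqref{C01} (the radial profile $\widetilde{J}$ of a radial $W^{1,1}$ function is absolutely continuous on $(0, \infty)$). The main obstacle — and the critical step of the argument — is the equi-integrability estimate: for any measurable $E \subset B_{r + \rho}(x_0)$, the non-increasing property of $\widetilde{J}$ from \eqref{J-assum} together with $d_\mathrm{g}(x_n, y) \geq |x_n - y|$ gives
$$\int_E \widetilde{J}(d_\mathrm{g}(x_n, y))\hspace{0.05em} \mathrm{d}y \leq \int_E \widetilde{J}(|x_n - y|)\hspace{0.05em} \mathrm{d}y = \int_{E - x_n} J(z)\hspace{0.05em} \mathrm{d}z,$$
which tends to $0$ as $|E| \to 0$ uniformly in $n$, by absolute continuity of the integral of $J \in L^1(\R^N)$ and the translation-invariance of Lebesgue measure. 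Vitali's theorem then yields $F(x_n) \to F(x_0)$ and $G(x_n) \to G(x_0)$, completing the proof. This last step — linking the geodesic kernel back to the translation-invariant Euclidean kernel via the monotonicity of $\widetilde{J}$ — is where the structural assumption \eqref{J-assum} is genuinely used; without it, controlling the measure of the preimages under $y\mapsto d_\mathrm{g}(x,y)$ uniformly in $x$ would require a co-area/regularity input which is less transparent.
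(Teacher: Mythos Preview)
Your argument is correct and takes a genuinely different route from the paper's. Both proofs begin with the same 1-Lipschitz bound $|d_\mathrm{g}(x,y)-d_\mathrm{g}(x_0,y)|\le|x-x_0|$ and both rely on the fact that the radial profile of a $W^{1,1}$ function is absolutely continuous on $(0,\infty)$, but thereafter they diverge. The paper does \emph{not} use Vitali: it splits the domain into a small ball $B_\delta(x_1)$, on which $d_\mathrm{g}=|\cdot|$ so that the continuity of translations in $L^1$ applies, and its complement, where $d_\mathrm{g}\ge\delta/2$ allows one to write $|\widetilde{J}(d_\mathrm{g}(x_1,y))-\widetilde{J}(d_\mathrm{g}(x_2,y))|\le\int_{d_\mathrm{g}(x_2,y)}^{d_\mathrm{g}(x_1,y)}|\widetilde{J}'(t)|\,\mathrm{d}t$ and control it via the weighted bound $\int_0^r|\widetilde{J}'(t)|\,t^{N-1}\,\mathrm{d}t\lesssim\|\nabla J\|_{L^1}$; dominated convergence then finishes. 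Your approach is cleaner and avoids the near/far dichotomy, but it buys this economy at the price of an extra hypothesis: the equi-integrability step uses the radial monotonicity of $\widetilde{J}$ from \eqref{J-assum}, which is not part of the lemma's stated hypotheses (though it is in force in the paper's application). The paper's proof needs only the $W^{1,1}$ regularity from \eqref{C01} and would still work for non-monotone compactly supported kernels. One minor point: when you write that $F$ and $G$ involve ``some $\phi\in L^\infty$'', note that $w$ is only assumed continuous, not bounded; this is harmless because the integrand is supported in the fixed compact $B_{r+\rho}(x_0)$, on which $w$ is bounded, but it is worth saying explicitly.
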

\begin{proof}
Let $x_1,x_2\in\R^N\setminus K$ with $x_1$ fixed and $x_2$ arbitrarily close to $x_1$. For $w\in C(\R^N\setminus K)$, the analogue of \eqref{BC:continu:Lw} is here:
$$ |L_{\mathrm{g}}w(x_1)-L_{\mathrm{g}}w(x_2)|\leq 2\|w\|_\infty\left|\int_{\R^N}[\widetilde{J}(d_\mathrm{g}(x_1,y))-\widetilde{J}(d_{\mathrm{g}}(x_2,y))]\mathrm{d}y\right|+\|\widetilde{\mathcal{J}}\|_\infty|w(x_1)-w(x_2)|, $$
where $\widetilde{\mathcal{J}}$ is as in \eqref{defJtilde(x)}.
Since $w\in C(\R^N\setminus K)$, the delicate part is to show that the first term on the right-hand side vanishes as $x_2\to x_1$. This can be done as follows.
Let $\delta>0$ be small enough so that $x_2\in B_{\delta/2}(x_1)\subset B_{\delta}(x_1)\subset\R^N\setminus K$. Then, we may write
\begin{align*}
&\left|\int_{\R^N\setminus K}[\widetilde{J}(d_\mathrm{g}(x_1,y))-\widetilde{J}(d_{\mathrm{g}}(x_2,y))]\mathrm{d}y\right|\\
&\qquad\leq \int_{\R^N\setminus (B_\delta(x_1)\cup K)}|\widetilde{J}(d_\mathrm{g}(x_1,y))-\widetilde{J}(d_{\mathrm{g}}(x_2,y))|\mathrm{d}y+\int_{B_{\delta(x_1)}}|\widetilde{J}(d_\mathrm{g}(x_1,y))-\widetilde{J}(d_{\mathrm{g}}(x_2,y))|\mathrm{d}y \\
&\qquad=:I_1(x_1,x_2)+I_2(x_1,x_2).
\end{align*}
Since $d_{\mathrm{g}}(x_i,y)=|x_i-y|$ for any $i\in\{1,2\}$ and $y\in B_\delta(x_1)$, we have
\begin{align*}
I_2(x_1,x_2)\leq \|J(\cdot+x_1-x_2)-J\|_{L^1(\R^N)}\underset{x_2\to x_1}{\longrightarrow}0. 
\end{align*}
On the other hand, since $J$ is radially symmetric, $\mathrm{supp}(J)=B_r$ and $J\in W^{1,1}(B_r)$, by \cite[Theorems 1.1 and 2.3]{Figueir}, we have that $\widetilde{J}\in W^{1,1}((0,r),t^{N-1})$, $\widetilde{J}$ is almost everywhere equal to a continuous function, $\widetilde{J}'$ exists almost everywhere and
\begin{align}
\int_0^r|\widetilde{J}'(t)|t^{N-1}\mathrm{d}t\leq C_1\int_{B_r}|\nabla J(z)|\mathrm{d}z. \label{BC:radialSObolev}
\end{align}
Therefore, using the fact that $d_{\mathrm{g}}(x_i,y)\geq|x_i-y|\geq\delta/2$ for any $y\in\R^N\setminus(B_\delta(x_1)\cup K)$, we have
\begin{align}
I_1(x_1,x_2)&\leq\int_{\R^N\setminus (B_\delta(x_1)\cup K)}\int_{d_{\mathrm{g}}(x_2,y)}^{d_\mathrm{g}(x_1,y)}|\widetilde{J}'(t)|\mathrm{d}t\mathrm{d}y \nonumber \\
&\leq \left(\frac{2}{\delta}\right)^{N-1}\int_{\R^N\setminus (B_\delta(x_1)\cup K)}\int_{d_{\mathrm{g}}(x_2,y)}^{d_\mathrm{g}(x_1,y)}|\widetilde{J}'(t)|t^{N-1}\mathrm{d}t\mathrm{d}y. \label{LaPartieDueAI1}
\end{align}
Now, since $x_1,x_2\in B_{\delta/2}(x_1)\subset\R^N\setminus K$ and $d_{\mathrm{g}}(\cdot,\cdot)$ is a distance, we have
$$ |d_{\mathrm{g}}(x_1,y)-d_{\mathrm{g}}(x_2,y)|\leq d_{\mathrm{g}}(x_1,x_2)=|x_1-x_2|\underset{x_2\to x_1}{\longrightarrow}0 $$
Therefore, using \eqref{BC:radialSObolev}, \eqref{LaPartieDueAI1} and the dominated convergence theorem, we obtain that
$$I_1(x_1,x_2)\to0\quad\text{as }x_2\to x_1.$$
This completes the proof.
\end{proof}

\medskip

\noindent \textbf{Acknowledgement.}
This work has been carried out in the framework of Archim\`ede Labex (ANR-11-LABX-0033) and of the A*MIDEX project (ANR-11-IDEX-0001-02), funded by the ``Investissements d'Avenir" French Government program managed by the French National Research Agency (ANR). The research leading to these results has also received funding from the ANR DEFI project NONLOCAL (ANR-14-CE25-0013) and the ANR JCJC project MODEVOL (ANR-13-JS01-0009).

\end{document}